\definecolor{OliveGreen}{rgb}{0,0.6,0}
\numberwithin{equation}{section}
\theoremstyle{plain}
\newtheorem*{theorem*}{Theorem}
\newtheorem{theorem}{Theorem}
\numberwithin{theorem}{section}
\newtheorem{proposition}[theorem]{Proposition}
\newtheorem{lemma}[theorem]{Lemma}
\newtheorem{corollary}[theorem]{Corollary}
\newtheorem{remark}[theorem]{Remark}
\theoremstyle{definition}
\newtheorem{definition}[theorem]{Definition}
\newtheorem{example}[theorem]{Example}
\newtheorem{notation}[theorem]{Notation}
\newcommand{\C}{\mathbb{C}}
\newcommand{\Z}{\mathbb{Z}}
\newcommand{\PP}{\mathbb{P}}
\newcommand{\R}{\mathbb{R}}
\newcommand{\mR}{\mathsmaller{\mathbb{R}}}
\newcommand{\red}[1]{{\color{red}#1}}
\date{}
\begin{document}

\author{Kaie Kubjas}
\author{Olga Kuznetsova}
\author{Luca Sodomaco}
\address{Department of Mathematics and Systems Analysis, Aalto University, Espoo, Finland}
\email{kaie.kubjas@aalto.fi}
\email{olga.kuznetsova@aalto.fi}
\email{luca.sodomaco@aalto.fi}

\subjclass[2020]{14N10, 14M12, 90C26, 13P25, 58K05, 14H05}
\keywords{Gradient-solvable, radical parametrization, optimization correspondence, algebraic degree of optimization, $s$-conormal variety, $s$-dual variety, $p$-norm distance degree, polar classes}

\title[Algebraic degree of optimization over a variety]{Algebraic degree of optimization over a variety \\ with an application to $P$-norm distance degree}

\begin{abstract}
We study an optimization problem with the feasible set being a real algebraic variety $X$ and whose parametric objective function $f_u$ is gradient-solvable with respect to the parametric data $u$.
This class of problems includes Euclidean distance optimization as well as maximum likelihood optimization.
For these particular optimization problems, a prominent role is played by the ED and ML correspondence, respectively.
To our generalized optimization problem we attach an optimization correspondence and show that it is equidimensional.
This leads to the notion of algebraic degree of optimization on $X$.
We apply these results to $p$-norm optimization, and define the $p$-norm distance degree of $X$, which coincides with the ED degree of $X$ for $p=2$. Finally, we derive a formula for the $p$-norm distance degree of $X$ as a weighted sum of the polar classes of $X$ under suitable transversality conditions.
\end{abstract}

\maketitle


\section{Introduction}\label{sec: intro}

An optimization problem over a variety with the objective function depending on data $u \in \R^n$ takes the form
\begin{equation} \label{optimization-problem}
\begin{aligned}
\min_x \  & f_u(x) \\
\textrm{s.t.} \  & g_1(x) = \ldots = g_s(x) = 0\,,
\end{aligned}
\end{equation}
where $g_1,\ldots,g_s$ are polynomials. The feasible region of~(\ref{optimization-problem}) is the variety $X^\mR=\mathbb{V}(g_1,\ldots,g_s)$. In this paper, we consider optimization problems over varieties where the objective function $f_u$ is restricted to the class of functions that we call {\em gradient-solvable}, i.e., functions whose partial derivatives with respect to $x_i$ are rational functions and are solvable in $u$. We will make this precise in Section~\ref{sec: def optimization degree}. This class includes common objective functions such as the squared Euclidean distance and the log-likelihood function, but also the $p$-th power of the $p$-norm for an integer $p\ge 2$.

To find the optimal solution of the optimization problem~(\ref{optimization-problem}), we will consider the critical points of $f_u$ restricted to $X$. These critical points contain the global optimum of~(\ref{optimization-problem}), assuming that the global optimum exists. Since the gradient of $f_u$ is a rational function and $X^\mR$ is defined by polynomials, this problem can be studied algebraically. In order to apply classical results of algebraic geometry, we consider the complex variety $X$ whose ideal is $I(X)\coloneqq\mathcal{I}(X^\mR)\subset\C[x_1,\ldots,x_n]$, allow the function $f_u$ to take complex values and consider the complex critical points of $f_u$ on $X$. We allow only the critical points that are smooth points of $X$. We will show that the number of complex critical points that are smooth on $X$ is constant for a general data vector $u$. Following~\cite{nie2009algebraic}, we call this invariant the {\em algebraic degree} of the optimization problem~(\ref{optimization-problem}).

The algebraic degree of optimization measures the algebraic complexity of the optimal solution of the optimization problem~(\ref{optimization-problem}). In this paper, we study the properties of this degree focusing on the case when $f_u$ is the $p$-th power of the $p$-norm. In this case, we call it the {\em $p$-norm distance degree}. We provide formulas for the $p$-norm distance degree using tools from algebraic geometry. 

We assume that $\R^n$ is equipped with the standard inner product $x\cdot y=\sum_{i=1}^nx_iy_i$. Using Lagrange multipliers, the critical points of $f_u$ restricted on the smooth locus of $X$ are precisely the $x \in X_{sm}$ such that $\nabla f_u$ is perpendicular to the tangent space $T_x X$ of $X$ at $x$. If $I(X)=\langle g_1,\ldots,g_s \rangle$, then this is equivalent to $\nabla f_u$ being in the row span of the Jacobian $\text{Jac}(g_1,\ldots,g_s)$ at $x$. 

\begin{example}\label{ex: 4-norm optimization ellipse}
Let $X\subset\C^2$ be the ellipse defined by $x_1^2+4x_2^2-1$ and $f_u= (u_1-x_1)^4+(u_2-x_2)^4$. For a general data point $u=(u_1,u_2)\in\C^2$, there are 8 complex restricted critical points of $f_u$ on $X$. They form the zero locus of the ideal
$$
\langle x_1^2+4x_2^2-1, 4x_2(u_1-x_1)^3-x_1(u_2-x_2)^3\rangle \subset \C[x]\,.
$$
If we pick the point $u=(0,0)$, then all 8 critical points are real and are shown in Figure \ref{fig: crit points 4-norm ellipse}.
\begin{figure}[ht]
	\centering
	\includegraphics[width=3.3in]{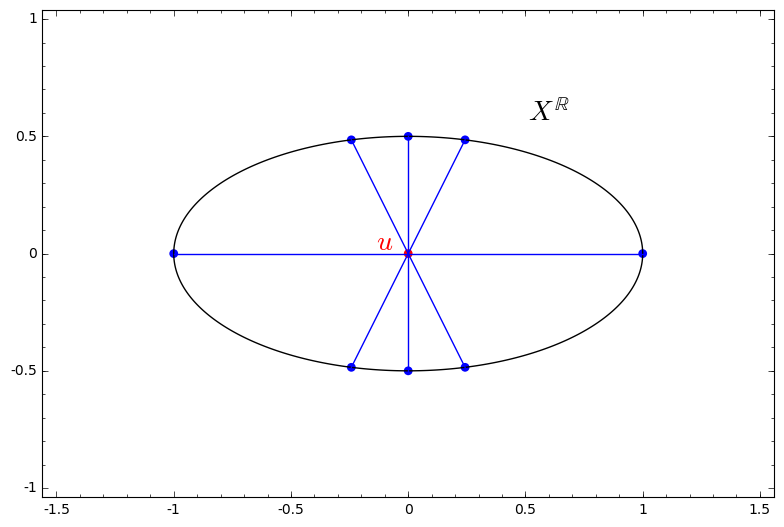}
	\caption{The 8 real critical points of $f_u$ on the ellipse $X^\mR$.}\label{fig: crit points 4-norm ellipse}
\end{figure}
\end{example}

Example \ref{ex: 4-norm optimization ellipse} is an instance of $p$-norm distance optimization from a data point $u\in\R^n$ on a real variety $X^\mR\subset\R^n$, when $p$ is a positive integer. Figure \ref{fig: many p-balls} gives an intuitive idea of how the closest point $x\in X^\mR$ from the data $u$ changes as long as $p$ increases, since the shape of the corresponding $p$-norm ball centered at $u$ and tangent to $X^\mR$ tends to a square.

\begin{figure}
    \centering
    \includegraphics[width=3.3in]{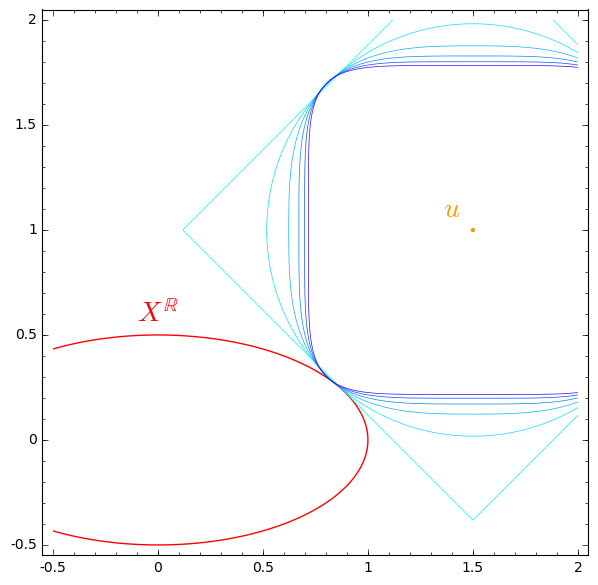}
    \includegraphics[width=3.3in]{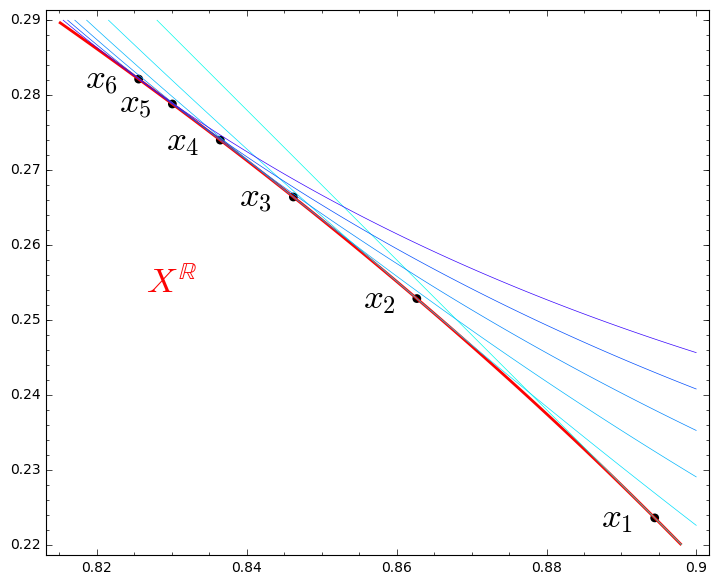}
    \caption{The $p$-norm balls centered at $u$ and tangent to the ellipse $X^\mR$. On the right, a detail of the tangency points $x_p$, which attain the minimum over $X^\mR$ of the $p$-norm from $u$, for $p\in\{1,\ldots,6\}$.}
    \label{fig: many p-balls}
\end{figure}

The two special cases that have been extensively studied are the Euclidean distance (ED) degree corresponding to the squared Euclidean distance function $\sum_i (u_i-x_i)^2$ and maximum likelihood (ML) degree corresponding to the log-likelihood function $\sum_i u_i \log x_i$.  The ED degree was studied in~\cite{DHOST} and appeared in several areas of applied algebraic geometry \cite{DLLO2017,lee2017,AHEDdegree,maximDefect,MRWmultiview,breiding2020euclidean}. The ML degree was introduced in~\cite{catanese2006maximum,hosten2005solving} and a comprehensive overview is given in~\cite{huh2014likelihood}. Our results extend the results on the ED degree and ML degree. 

The algebraic degrees of linear optimization and semidefinite optimization are investigated in~\cite{rostalski2012chapter} and~\cite{nie2010algebraicsemidefinite} respectively. Ranestad and Nie define the algebraic degree of polynomial optimization in~\cite{nie2009algebraic}. They consider optimization problems with polynomial objective functions and feasible regions given by polynomial equations and inequalities under the assumption that all the polynomials are generic. In our paper, we do not make this assumption. Moreover, although the gradient-solvable functions do not include all polynomials, they include functions that are not polynomial. In~\cite{horobet2020data}, Horobe\c{t} and Rodriguez also studied critical points of optimization problems on varieties with objective functions that are not necessarily polynomial. The class of objective functions that they consider is strictly included in the class of gradient-solvable functions.

The outline of the paper is the following: In Section~\ref{sec: prelim radical}, we study radical parametrizations restricted to varieties building on the work of Sendra, Sevilla and Villarino~\cite{sendra2017algebraic}. This allows us to rigorously define the algebraic degree over a variety for gradient-solvable functions in Section~\ref{sec: def optimization degree}. In the same section, we define the optimization correspondence $\mathcal{F}_X$ which is the variety of points $(x,u) \in \C^n \times \C^n$ such that $x\in X$ is critical for $f_u$. The optimization correspondence will be our main tool for proving formulas for the $p$-norm distance degree. In Section~\ref{sec: p-norm distance degree}, we define the $p$-norm distance degree of an affine variety $X$ and we focus on the case when $X$ is an affine cone, namely a projective variety. In Section~\ref{sec: p-norm polar classes} we introduce the $s$-conormal variety of a projective variety $X$ and use it to derive a formula for the $p$-norm distance degree of $X$ as a weighted sum of the polar classes of $X$, when $X$ is sufficiently general. This leads to several $p$-norm distance degree formulas for hypersurfaces as well as toric varieties like Segre-Veronese products of projective spaces.

The code for computations in this paper can be found at \url{https://github.com/olgakuznetsova/algebraic-degree-of-optimization}.

\section{Radical parametrizations}\label{sec: prelim radical}

\subsection{The variety of a radical parametrization} \label{sec:variety-of-a-radical-parametrization}
In this subsection, we will consider varieties that are Zariski closures of radical parametrizations, i.e., parametrizations $y=(y_1(t),\ldots,y_r(t))$ where $t=(t_1,\ldots,t_n)$ are parameters and $y_i$ are functions that contain sums, differences, products, divisions and roots of any index. The exposition in this subsection is based on \cite{sendra2017algebraic}.

\begin{definition}[Definition 2.1 in \cite{sendra2017algebraic}]
Let $t=\{t_1,\dots,t_n\}$ be a set of variables. A radical tower over $k(t)$ is a tower of fields  $\mathbb{T}=[k_0=k(t)\subset k_1 \subset \dots \subset k_m]$ such that $k_0=k(t)$, $k_{i+1}=k_i(\delta_{i+1})=k_0(\delta_1,\delta_2,\dots,\delta_{i+1})$ and $\delta_{i}^{d_{i}}=\alpha_i$ for some $\alpha_i \in k_{i-1}$ and $d_{i}\in \mathbb{N}$.
\end{definition}

Without loss of generality, we can assume that all $\delta_i \neq 0$ because otherwise $k_i=k_{i-1}$ and can be omitted.

\begin{remark}
For each $\delta_{i}^{d_{i}}=\alpha_i \in k_{i-1}$, define
\begin{equation}\label{def: Delta}
    \Delta_i^{d_{i}}=\alpha_i(t,\Delta_1,\dots,\Delta_{i-1}),
\end{equation}
where each $\Delta_i$ is a variable dependent on $t$. These new variables allow to take derivatives of the elements $\delta_i$ with respect to any $t_i$ recursively~(\cite[Chapter 8.5, Theorem 5.1]{lang2002graduate} and~\cite[Remark 2.3]{sendra2017algebraic}). Specifically, in
\[
d_i \Delta_i^{d_i-1}\frac{\partial \Delta_i}{\partial t_j }=\frac{\partial \alpha_i}{\partial t_j }\,,
\]
where the right-hand side depends only $t,\Delta_1,\dots,\Delta_{i-1}$. Backward substituting $\delta$'s makes the relation between $\frac{\partial \delta_i}{\partial t_j }$ and the previous partial derivatives explicit. 
\end{remark}

\begin{definition}[Definition 2.2 in \cite{sendra2017algebraic}]
Let $t=\{t_1,\dots,t_n\}$ be a set of variables, $\mathbb{T}$ a radical tower over $k(t)$ and $y(t)=(y_1(t),\dots,y_r(t))$ a tuple in the last field of $\mathbb{T}$. If the Jacobian of $y(t)$ has rank $n$, then $y(t)$ is called a radical parametrization. 
\end{definition}

It is not sufficient to select an $r$-tuple of elements of $k_m$ to define a radical parametrization as not all choices of branches give a well-defined map, which is illustrated in~\cite[Example 2.4]{sendra2017algebraic}. A compact way to express the choice of branch for a given $\delta_i$ is to pick a point $t_0 \in \mathbb{C}$ such that no two branches of $\delta_i$ has the same $\delta_i(t_0)$, i.e., $t_0$ is not a branch point, and fix $\delta_i(t_0)=a_i$. Hence, we can write a radical parametrization as 
\[
\mathcal{P}=\{y(t),\mathbb{T},\delta_1(t_0)=a_1,\dots,\delta_m(t_0)=a_m\}\,,
\]
where $\mathbb{T}$ is the corresponding radical tower and $t_0$ is not a branch point for any $\delta_i$.

\begin{definition}[Definitions 3.2 and 3.4 in \cite{sendra2017algebraic}] \label{defn: phi and incidence variety}
Given a radical parametrization $\mathcal{P}$, let 
\[
\phi\colon\C^n \to \C^{n+m+r}\,,\quad\phi(t)\coloneqq(t,\delta,x)
\]
and define the \emph{incidence variety} $\mathcal{B}_{\mathcal{P}}\coloneqq\overline{\operatorname{Im}{\phi}}$~\cite[Definition 3.2]{sendra2017algebraic}. Let 
\[
\pi\colon\mathcal{B}_{\mathcal{P}} \subset\C^{n+m+r} \to \C^r\,,\quad\pi(t,\delta,x)\coloneqq x
\]
be the projection on $x$.
The variety $\mathcal{V}_{\mathcal{P}}\coloneqq\overline{\operatorname{Im}{\pi}} \subset \C^r$ is called \emph{the radical variety} of $\mathcal{P}$.
\end{definition}

Going forward we will assume that $r > n$. If $r < n$ then the Jacobian condition cannot be fulfilled as its rank is at most $r$. If $r=n$, then the radical variety is the entire space.

In what follows, we will need additional notation introduced in~\cite{sendra2017algebraic}.
\begin{notation}
First, let us introduce additional variables $T$, $\Delta$, $Y$ and $Z$ and a polynomial ring $\C[T,\Delta,Y,Z]$. Each new variable other than $Z$ represents a corresponding variable in the radical tower $\mathbb{T}$ and $Z$ is used for removing the zero set of denominators. Since $\alpha(t, \delta)$ and $y$ are rational functions, we have the relations
\begin{align}{\label{new ring}}
\begin{split}
    \Delta_i^{d_i}&=\alpha_i(T,\Delta_1,\dots,\Delta_{i-1})=\frac{\alpha_{iN}(T,\Delta_1,\dots,\Delta_{i-1})}{\alpha_{iD}(T,\Delta_1,\dots,\Delta_{i-1})},\\
    Y_i&=\frac{y_{iN}(T,\Delta)}{y_{iD}(T,\Delta)}.
\end{split}
\end{align}
The rational functions in~(\ref{new ring}) are not unique, but for simplicity we denote only the dependence on $\mathcal{P}$ for the following definitions.

\noindent We will need the following polynomials
\begin{align}\label{eq:defining polynomials}
\begin{split}
    E_i&\coloneqq\Delta_i^{d_i} \cdot \alpha_{iD}(T,\Delta_1,\dots,\Delta_{i-1})-\alpha_{iN}(T,\Delta_1,\dots,\Delta_{i-1}) \text{ for } i=1,\dots,m,\\
    G_j&\coloneqq Y_j \cdot y_{jD}(T,\Delta)-y_{jN}(T,\Delta) \text{ for } j=1,\dots,r,\\
    G_Z&\coloneqq Z\cdot\operatorname{lcm}(\alpha_{1D},\cdots,\alpha_{mD},y_{1D},\cdots,y_{rD})-1.
\end{split}
\end{align}
\end{notation}

\begin{definition}
The varieties $\mathcal{D}_\mathcal{P}$ and 
$\mathcal{A}_\mathcal{P}$ are defined as 
\begin{align*}
\mathcal{D}_\mathcal{P} &\coloneqq\mathbb{V}(\mathbb{I}(E_1,\dots,E_m,G_1,\dots,G_r,G_Z))\subset \C^{n+m+r+1},\\
\mathcal{A}_\mathcal{P} &\coloneqq\mathbb{V}(\mathbb{I}(E_1,\dots,E_m,G_1,\dots,G_r,G_Z)\cap  \C[T,\Delta,Y])\subset \C^{n+m+r}\,.
\end{align*}
\end{definition}

\begin{theorem}[Theorems 3.10 and 3.11 in~\cite{sendra2017algebraic}]\label{thm: properties of Bp}
\begin{enumerate}
    \item[]
    \item There exists a unique irreducible component of $\mathcal{A}_\mathcal{P}$ that contains the incidence variety $\mathcal{B}_\mathcal{P}$.
    \item The variety $\mathcal{A}_\mathcal{P}$ is equidimensional and has dimension $n$.
    \item The variety $\mathcal{B}_\mathcal{P}$ is irreducible of dimension $n$.
    \item The radical variety $\mathcal{V}_{\mathcal{P}}$ is irreducible of dimension $n$.
\end{enumerate}
\end{theorem}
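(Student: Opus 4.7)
The plan is to prove parts (3) and (2) directly and then deduce (1) and (4) formally.

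For part (3), I would observe that $\mathcal{B}_\mathcal{P}$ is by construction the Zariski closure of the image of the morphism $\phi\colon\C^n\to\C^{n+m+r}$, $\phi(t)=(t,\delta(t),y(t))$, defined on the open set where the chosen branch of $\delta$ is holomorphic and no denominator vanishes. Since $\C^n$ is irreducible, the closure of a morphism's image is irreducible, so $\mathcal{B}_\mathcal{P}$ is irreducible. Projection onto the first $n$ coordinates is a left inverse of $\phi$, so $\phi$ is injective onto an $n$-dimensional image, giving $\dim\mathcal{B}_\mathcal{P}=n$.

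Part (2) is the heart of the matter. I would first handle $\mathcal{D}_\mathcal{P}\subset\C^{n+m+r+1}$, which is cut out by the $m+r+1$ polynomials $E_1,\dots,E_m,G_1,\dots,G_r,G_Z$, so Krull's height theorem gives $\dim D\geq n$ for every irreducible component $D$. For the opposite bound, consider the projection $\pi_T\colon\mathcal{D}_\mathcal{P}\to\C^n$ onto the $T$-coordinates. The relation $G_Z=0$ forces $\operatorname{lcm}(\alpha_{1D},\dots,y_{rD})\neq 0$ and pins down $Z$ uniquely from $(T,\Delta,Y)$. For any $T_0$, the recursive relations $\Delta_i^{d_i}=\alpha_i(T_0,\Delta_1,\dots,\Delta_{i-1})$ are polynomial equations of degree $d_i$ in $\Delta_i$ and therefore admit at most $d_i$ solutions, and once all $\Delta_i$ are fixed each $Y_j$ is uniquely determined by $G_j=0$. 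Hence every fibre of $\pi_T$ is finite, so $\dim\mathcal{D}_\mathcal{P}\leq n$, and combined with the Krull bound, $\mathcal{D}_\mathcal{P}$ is equidimensional of dimension $n$. To pass to $\mathcal{A}_\mathcal{P}$, note that the forgetful map $(T,\Delta,Y,Z)\mapsto(T,\Delta,Y)$ is an isomorphism of $\mathcal{D}_\mathcal{P}$ onto its (locally closed) image, with inverse $(T,\Delta,Y)\mapsto(T,\Delta,Y,1/\operatorname{lcm}(\cdots))$. By the elimination theorem $\mathcal{A}_\mathcal{P}$ is the Zariski closure of this image, so every irreducible component of $\mathcal{A}_\mathcal{P}$ is the closure of one from $\mathcal{D}_\mathcal{P}$, and closure preserves both irreducibility and dimension, giving equidimensionality of $\mathcal{A}_\mathcal{P}$ in dimension $n$.

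Parts (1) and (4) follow formally. For (1), the map $\phi$ takes values in $\mathcal{A}_\mathcal{P}$ on its domain of definition, so $\mathcal{B}_\mathcal{P}\subseteq\mathcal{A}_\mathcal{P}$; irreducibility of $\mathcal{B}_\mathcal{P}$ forces it into a single irreducible component $C$ of $\mathcal{A}_\mathcal{P}$, and since $\dim\mathcal{B}_\mathcal{P}=n=\dim C$ by (3) and (2) we obtain $\mathcal{B}_\mathcal{P}=C$, which exhibits uniqueness. For (4), $\mathcal{V}_\mathcal{P}=\overline{\pi(\mathcal{B}_\mathcal{P})}$ is irreducible as the closure of a morphism's image on an irreducible source; the composition $\C^n\to\mathcal{B}_\mathcal{P}\to\mathcal{V}_\mathcal{P}$ is exactly the parametrization $t\mapsto y(t)$, whose Jacobian has rank $n$ by the very definition of a radical parametrization, so the map is generically finite and $\dim\mathcal{V}_\mathcal{P}=n$. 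I expect the main obstacle to lie in the dimension bookkeeping of (2), specifically in verifying that the recursive branch count truly cuts the dimension by exactly one at each step (so that upper and lower bounds match on every component) and that the passage to closure from $\mathcal{D}_\mathcal{P}$ to $\mathcal{A}_\mathcal{P}$ does not obscure equidimensionality through coincidences among the $\overline{Y_i}$.
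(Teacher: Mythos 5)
Your scheme --- prove (3) first, then deduce (1) --- has a genuine gap at (3), and since (1) and (4) hinge on (3), the argument is undermined. The map $\phi(t)=(t,\delta(t),y(t))$ is not a morphism of algebraic varieties: the $\delta_i$ are radicals, so a chosen branch of $\phi$ is a single-valued holomorphic map only on $\C^n$ minus branch loci of real codimension one, which may disconnect the domain. The slogan ``closure of a morphism's image on an irreducible source is irreducible'' therefore does not apply, and the irreducibility of $\mathcal{B}_\mathcal{P}$ is precisely the delicate content of Sendra--Sevilla--Villarino's Theorem 3.10 (and of its restricted analogue, Theorem~\ref{thm: Bp(X) vs Ap(X)} in the present paper, whose proof follows the same blueprint). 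The intended order is the reverse of yours: one first proves item (1) directly, by lifting $\phi$ to $\varphi_Z\colon\Omega\to\mathcal{D}_\mathcal{P}$, showing $\varphi_Z(\Omega)$ consists of smooth points each lying in a unique $n$-dimensional component (the analogue of Lemma~\ref{lemma: Omega maps to nonsingular points}), showing the good set $\Omega$ is dense and path-connected (analogues of Lemmas~\ref{lemma: X-W path connected} and~\ref{lemma: variety of D}), and concluding that $\mathcal{B}_\mathcal{P}$ cannot meet two components of $\mathcal{A}_\mathcal{P}$ without $\varphi_Z$ of a path passing through a singular point. Items (3) and (4) are then formal consequences of (1) and (2). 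A further wrinkle: your dimension count in (3), based on injectivity of $\phi$ alone, only bounds the \emph{analytic} dimension of $\phi(\C^n)$; the Zariski closure of an analytic set can have strictly larger dimension (as for the graph of $e^z$ in $\C^2$), so the upper bound $\dim\mathcal{B}_\mathcal{P}\le n$ really does need item (2).

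Your part (2) is sound and closely parallels the paper's proof of Theorem~\ref{thm: components of Ap(X)}(1): lower bound on dimension from Krull height (equivalently the Affine Dimension Theorem), upper bound from finiteness of the $T$-fibres of the projection --- note $G_Z=0$ forces all $\alpha_{iD}$ and $y_{jD}$ to be nonzero, so each $E_i$ pins $\Delta_i$ to at most $d_i$ values and the $G_j$, $G_Z$ then fix $Y_j$ and $Z$ uniquely --- followed by passage to $\mathcal{A}_\mathcal{P}$ via the one-to-one projection $\pi_Z$. If you prepend the connectedness-and-smoothness argument for (1), your (2) combines with it to deliver (3) and (4) cleanly.
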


\begin{example}\label{ex: ellipse using radical parametrization}
Let $X=\mathbb{V}(g)\subset\C^2$ be the ellipse defined by $g=x_1^2+4x_2^2-1$. We consider its rational parametrization $\psi\colon\C\to\C^2$ defined by $\psi(t)=(\frac{2t}{1+t^2},\frac{1-t^2}{2(1+t^2)})$. The tangent space $T_{\psi(t)}X$ is spanned by the vector $\frac{d\psi}{dt}=-\frac{2}{(1+t^2)^2}(t^2-1,t)$. We want to optimize the objective function $f_u= (u_1-x_1)^3+(u_2-x_2)^3$ on $X$. This is achieved by imposing that $\nabla f_u$ is proportional to the normal line $N_{\psi(t)}X$. The latter is given by the left kernel of $(\frac{d\psi}{dt})^\mathsmaller{T}$, spanned by $\beta=(t, 1-t^2)$. This means that there exists $s \in \C\setminus \{0\}$ such that $(u_i-\psi_i(t))^2=s\beta_i$ for all $i\in\{1,2\}$. Solving both equations for $u_i$ by radicals yields the map $\phi\colon\C^2 \to\C^4$ defined by
\[
\phi(t,s)=(\psi_1(t),\psi_2(t),\psi_1(t)+\sqrt{st},\psi_2(t)+\sqrt{s(1-t^2)})\,.
\]
We introduce the parameters $\delta_1$ and $\delta_2$ via the relations $\delta_1^2=st$ and $\delta_2^2=s(1-t^2)$. Then we construct the radical tower
\[
\mathbb{T}\coloneqq[k_0\coloneqq\C(t,s) \subset k_1\coloneqq\C(t,s)(\delta_1)\subset k_2\coloneqq\C(t,s)(\delta_1,\delta_2)]\,.
\]
Direct computation confirms that $\phi$ satisfies the Jacobian criterion for radical parametrization. By construction of $\phi$, we want to impose that each $\delta_i$ evaluates as the principal branch. Since $(2,1)$ is not a branch point for either $\delta_i$, we can specify the evaluation of parametrization compactly as $\delta_1(2,1)=\sqrt{2}$ and $\delta_2(2,1)=\sqrt{3}\,i$. The corresponding radical parametrization is
\begin{equation*}
    \mathcal{P}=\left\{\left(\psi_1(t),\psi_2(t),\psi_1(t)+\delta_1, \psi_2(t)+\delta_2\right), \mathbb{T},\ \delta_1(2,1)=\sqrt{2},\ \delta_2(2,1)=\sqrt{3}\,i\right\}
\end{equation*}
and its radical variety $\mathcal{V}_\mathcal{P}$ is irreducible of dimension $2$.

There are $4$ possible choices of branches of $\delta_i$'s and all of them satisfy the Jacobian criterion for radical parametrization.  In this example, all four choices define the same radical variety $\mathcal{V}_{\mathcal{P}}$, although in general different choices can lead to different radical varieties. 

In Example~\ref{ex: ellipse using implicit parametrization}, we will show how to derive the same $\mathcal{V}_\mathcal{P}$ using the implicit representation of the ellipse $X$. In Section~\ref{sec: def optimization degree}, we will show that this approach provides a general algorithm for radically parametrizing the optimization correspondence $\mathcal{F}_X$.
\end{example}

\subsection{A radical parametrization restricted to a variety}\label{section: implicit varieties}

In this section, we extend the results from~\cite{sendra2017algebraic} to radical parametrizations restricted to irreducible varieties.

\begin{definition}\label{defn: restriction of radical parametrization}
Let $X \subset \C^n$ be an irreducible variety of codimension $c$   and let $\mathcal{P}$ be a radical parametrization such that there exists a dense subset of $X$ where  the rank of the Jacobian of $\mathcal{P}$ is $n$ and the Zariski closure of the zero set of $\operatorname{lcm}(\alpha_{1N},\ldots,\alpha_{mN},\alpha_{1D},\ldots,\alpha_{mD},y_{1D},\ldots,y_{rD})$ does not contain $X$. We denote by $I(X) \subset \C[T]$ the vanishing ideal of $X$ and $\phi$ is as in Definition~\ref{defn: phi and incidence variety}.
Define
\begin{align*}
    \mathcal{D}_\mathcal{P}(X)&\coloneqq (X\times \C^{m+r+1}) \cap \mathcal{D}_\mathcal{P} = \mathbb{V}(I(X)+\langle E_1,\dots,E_m,G_1,\dots,G_r,G_Z \rangle),\\
    \mathcal{A}_\mathcal{P}(X)&\coloneqq\mathbb{V}(I(\mathcal{D}_\mathcal{P}(X)) \cap \C[T,\Delta,Y]),\\
    \mathcal{B}_\mathcal{P}(X)&\coloneqq\overline{{\phi(X)}},\\
    \mathcal{V}_{\mathcal{P}}(X)&\coloneqq\overline{{\pi(\mathcal{B}_\mathcal{P}(X))}}.
\end{align*}
\end{definition}

The condition that the Zariski closure of the zero set of $\operatorname{lcm}(\alpha_{1N},\ldots,\alpha_{mN},\alpha_{1D},\ldots,\alpha_{mD},y_{1D},\ldots,y_{rD})$ does not contain $X$ implies that $\mathcal{P}$ is well-defined on a dense subset of $X$.

We will now generalize~\cite[Theorems 3.10 and 3.11]{sendra2017algebraic} to the case when the parameters are restricted to an irreducible variety $X$.

\begin{theorem}\label{thm: Bp(X) vs Ap(X)}
Let $\mathcal{P}$ be a radical parametrization. There exists a unique irreducible component of $\mathcal{A}_\mathcal{P}(X)$ that contains $\mathcal{B}_\mathcal{P}(X)$.
\end{theorem}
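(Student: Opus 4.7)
The plan is to reduce to the irreducibility of $\mathcal{B}_\mathcal{P}(X)$, which then forces uniqueness of the component of $\mathcal{A}_\mathcal{P}(X)$ containing it. This mirrors the strategy of \cite[Theorem 3.10]{sendra2017algebraic} in the unrestricted case, but the fact that $X$ is irreducible makes the argument almost automatic once the basic set-theoretic inclusions are in place.

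First I would argue that $\mathcal{B}_\mathcal{P}(X) = \overline{\phi(X)}$ is irreducible. The condition imposed in Definition~\ref{defn: restriction of radical parametrization} on $\operatorname{lcm}(\alpha_{1N},\ldots,y_{rD})$ guarantees the existence of a dense Zariski open subset $U \subseteq X$ on which every denominator is nonzero and the fixed branch choices determining the $\delta_i$ yield a well-defined analytic map $\phi|_U \colon U \to \C^{n+m+r}$, $t \mapsto (t,\delta(t),x(t))$. Since $X$ is irreducible, $U$ is irreducible, and the image of an irreducible set under a continuous map is irreducible; taking Zariski closure preserves irreducibility, so $\mathcal{B}_\mathcal{P}(X) = \overline{\phi(U)}$ is irreducible.

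Next I would verify the inclusion $\mathcal{B}_\mathcal{P}(X) \subseteq \mathcal{A}_\mathcal{P}(X)$. For each $t \in U$, the tuple $(t,\delta(t),x(t))$ satisfies $E_i = 0$ for all $i$ and $G_j = 0$ for all $j$ by construction of the radical tower and of the functions $y_j$; moreover $t \in X$ so every polynomial in $I(X)$ vanishes there. Because the LCM of all denominators is nonzero on $U$, one can append the unique $Z$-value $Z_0 = 1/\operatorname{lcm}(\alpha_{1D},\ldots,y_{rD})(t,\delta(t))$ to land in $\mathcal{D}_\mathcal{P}(X)$. Projecting away $Z$ shows $\phi(U) \subseteq \operatorname{pr}_{T,\Delta,Y}(\mathcal{D}_\mathcal{P}(X))$, hence $\mathcal{B}_\mathcal{P}(X) \subseteq \overline{\operatorname{pr}_{T,\Delta,Y}(\mathcal{D}_\mathcal{P}(X))} = \mathcal{A}_\mathcal{P}(X)$, the last equality being the closure-of-projection description of the elimination ideal.

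Finally, once $\mathcal{B}_\mathcal{P}(X)$ is irreducible and contained in $\mathcal{A}_\mathcal{P}(X)$, it must lie inside at least one irreducible component of $\mathcal{A}_\mathcal{P}(X)$; if it were contained in two distinct components $C_1,C_2$, then $\mathcal{B}_\mathcal{P}(X) = (\mathcal{B}_\mathcal{P}(X)\cap C_1) \cup (\mathcal{B}_\mathcal{P}(X)\cap C_2)$ would be a decomposition into proper closed subsets, contradicting irreducibility. Thus the component containing $\mathcal{B}_\mathcal{P}(X)$ is unique.

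The only subtle point I anticipate is Paragraph 2: one must be careful to justify irreducibility of $\phi(U)$, since $\phi$ is not a polynomial or even rational map but an algebraic map defined via branch choices of the $\delta_i$. The cleanest way is to regard $U$ as irreducible in the complex-analytic topology (the underlying Zariski-open subset of an irreducible complex variety is connected and smooth generically), observe that $\phi|_U$ is holomorphic after making the branch cuts, and conclude that $\phi(U)$ is an irreducible analytic subset whose Zariski closure is an irreducible algebraic variety. All other steps are direct bookkeeping with the defining ideals of $\mathcal{D}_\mathcal{P}(X)$ and $\mathcal{A}_\mathcal{P}(X)$.
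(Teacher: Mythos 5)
Your strategy — prove that $\mathcal{B}_\mathcal{P}(X)$ is irreducible and then deduce uniqueness — is genuinely different from the paper's contradiction argument, but as written it contains two gaps, and the second one is fatal.

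First, your irreducibility claim for $\mathcal{B}_\mathcal{P}(X)$ rests on the assertion that $\phi(U)$ is an ``irreducible analytic subset whose Zariski closure is irreducible.'' Path-connectedness of $\phi(U)$ is indeed available (the branch choice is single-valued along any path from the base point, so all analytic continuations form a connected set), but connectedness alone does not yield an irreducible Zariski closure: the connected analytic set $\{(x,y):x^2=y^2,\ |x|+|y|<1\}$ has reducible Zariski closure $\mathbb{V}(x^2-y^2)$. What rescues the argument in the paper is Lemma~\ref{lemma: Omega maps to nonsingular points}: the image $\varphi_Z(\Omega)$ consists entirely of nonsingular points of $\mathcal{D}_\mathcal{P}(X)$. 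A connected set of nonsingular points cannot meet two different irreducible components (their intersection is always singular), so it sits inside a single component; only then does the closure become irreducible. Your ``after making the branch cuts'' aside does not supply this ingredient.

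Second, and more decisively, the final step of your argument is logically wrong. If $\mathcal{B}_\mathcal{P}(X)\subseteq C_1$ and $\mathcal{B}_\mathcal{P}(X)\subseteq C_2$, then $\mathcal{B}_\mathcal{P}(X)\cap C_i=\mathcal{B}_\mathcal{P}(X)$ for both $i$, so the ``decomposition'' $\mathcal{B}_\mathcal{P}(X)=(\mathcal{B}_\mathcal{P}(X)\cap C_1)\cup(\mathcal{B}_\mathcal{P}(X)\cap C_2)$ is the trivial identity $\mathcal{B}=\mathcal{B}\cup\mathcal{B}$, not a decomposition into proper closed subsets. Irreducibility alone never rules out containment in two components: an irreducible subvariety (e.g. a single point) lying in the intersection of two components is contained in both. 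To conclude uniqueness along your lines you would additionally need to show that $\dim\mathcal{B}_\mathcal{P}(X)$ equals the dimension of the components of $\mathcal{A}_\mathcal{P}(X)$, so that $\mathcal{B}_\mathcal{P}(X)$ is itself a component — and this is precisely the dimension bookkeeping the paper handles separately in Theorem~\ref{thm: components of Ap(X)}(1)--(2), after Theorem~\ref{thm: Bp(X) vs Ap(X)} is in place. The paper instead sidesteps all of this by arguing directly by contradiction: two distinct components containing $\mathcal{B}_\mathcal{P}(X)$ would, via the path-connectedness of $\Omega'$ (Lemma~\ref{lemma: X-W path connected}) and the covering argument, force a singular point into $\varphi_Z(\gamma)$, contradicting the smoothness guaranteed by Lemma~\ref{lemma: Omega maps to nonsingular points}. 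Both of those lemmas are doing essential work that your sketch omits.
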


The main idea of the proof is to prove the statement for $\phi(\Omega)$ where $\Omega$ is a special dense subset of $X$. Since $\Omega$ is dense in $X$, the claim also holds for $\mathcal{B}_\mathcal{P}(X)$. The proof of Theorem~\ref{thm: Bp(X) vs Ap(X)} will build on Lemmas~\ref{lemma: Omega maps to nonsingular points},~\ref{lemma: X-W path connected} and~\ref{lemma: variety of D} that state some properties of the set $\Omega$. Lemmas~\ref{lemma: Omega maps to nonsingular points},~\ref{lemma: X-W path connected} and~\ref{lemma: variety of D} are analogues of Step 1, 2 and 3 of the proof of Theorem 3.10 in~\cite{sendra2017algebraic}

We start by introducing some notation as in ~\cite{sendra2017algebraic}. Let $D \subset \C^n$ be the set consisting of $t \in \C^n$ satisfying
\begin{itemize}
    \item $\alpha_{iN}(t,\delta(t))=0 \text{ or } \alpha_{iD}(t,\delta(t))=0 \text{ for some }  i \in [m],$ or
    \item $y_{jD}(t,\delta(t))=0  \text{ for some }  j \in [r].$
\end{itemize}
Define $\Omega\coloneqq X\setminus (D\cup X_{\mathrm{sing}})$. Moreover, we define the maps
\begin{align*}
    \pi_Z&\colon\mathcal{D}_\mathcal{P}(X) \to \mathcal{A}_\mathcal{P}(X)\,, \quad\pi_Z(t,\delta,y,z)\coloneqq(t,\delta,y),\\
    \varphi_Z&\colon\Omega\to\mathcal{D}_\mathcal{P}(X)\,,\quad\varphi_Z\coloneqq\pi_Z^{-1}\circ\varphi\,.
\end{align*}
The projection $\pi_Z$ is one-to-one. 

\begin{lemma}\label{lemma: Omega maps to nonsingular points}
Let $t_0 \in \Omega$. Then $p\coloneqq\varphi_Z(t_0)$ is a smooth point in $\mathcal{D}_\mathcal{P}(X)$ and is contained in a unique component of $\mathcal{D}_\mathcal{P}(X)$ of dimension $n-c$.
\end{lemma}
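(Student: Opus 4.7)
The plan is to show that $p = \varphi_Z(t_0)$ is smooth in $\mathcal{D}_\mathcal{P}(X) \subset \C^{n+m+r+1}$ by computing the rank of the Jacobian of the defining equations $I(X), E_1, \dots, E_m, G_1, \dots, G_r, G_Z$ at $p$, and then deducing the local dimension and uniqueness of component from smoothness.

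First, I would order the variables as $(T, \Delta, Y, Z)$ and the equations as (i) a set of $c$ local generators of $I(X)$ at the smooth point $t_0$, (ii) $E_1,\dots,E_m$, (iii) $G_1,\dots,G_r$, (iv) $G_Z$. With this ordering, the Jacobian matrix acquires a block lower-triangular structure at $p$:
\begin{itemize}
\item The $I(X)$ block with respect to $T$ has rank $c$, because $t_0 \in X_{\mathrm{sm}}$ (this is what we used in choosing $\Omega$ disjoint from $X_{\mathrm{sing}}$), while its entries in $\Delta, Y, Z$ are zero.
\item The $E_i$ block with respect to $\Delta$ is lower triangular because $E_i$ depends only on $\Delta_1,\dots,\Delta_i$; its $i$-th diagonal entry is $\partial E_i/\partial \Delta_i = d_i \Delta_i^{d_i-1}\,\alpha_{iD}(T,\Delta_1,\dots,\Delta_{i-1})$. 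At $p$, both factors are nonzero: $\alpha_{iD}(t_0,\delta(t_0)) \neq 0$ since $t_0 \notin D$, and $\delta_i(t_0) \neq 0$ because $\delta_i(t_0)^{d_i} = \alpha_i(t_0,\delta(t_0)) = \alpha_{iN}/\alpha_{iD}$ with $\alpha_{iN}(t_0,\delta(t_0)) \neq 0$ (again since $t_0 \in \Omega$).
\item The $G_j$ block with respect to $Y$ is diagonal with entry $\partial G_j/\partial Y_j = y_{jD}(T,\Delta)$, nonzero at $p$ because $t_0 \notin D$.
\item The $G_Z$ equation contributes $\partial G_Z/\partial Z = \operatorname{lcm}(\alpha_{1D},\dots,\alpha_{mD},y_{1D},\dots,y_{rD})$, which evaluates to a nonzero number at $p$ by the same reason.
\end{itemize}

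Combining these blocks, the Jacobian at $p$ has full rank $c + m + r + 1$. Since $\mathcal{D}_\mathcal{P}(X)$ lies in an ambient space of dimension $n + m + r + 1$, the Jacobian criterion tells us that $p$ is a smooth point of $\mathcal{D}_\mathcal{P}(X)$ whose local dimension is exactly $n - c$. A smooth point of an affine variety belongs to a unique irreducible component, which must then have dimension $n - c$, giving both conclusions of the lemma.

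The only subtle step is justifying $\delta_i(t_0) \neq 0$ and $\alpha_{iD}(t_0,\delta(t_0)) \neq 0$ simultaneously from the definition of $\Omega$ — this is the content of the first bullet in the list defining $D$ and is the place where the assumption that $X$ is not contained in the zero set of the relevant $\operatorname{lcm}$ (from Definition~\ref{defn: restriction of radical parametrization}) is used, so that $\Omega$ is indeed dense in $X$. Everything else is a routine block-triangularity argument mirroring Step~1 of the proof of Theorem~3.10 in \cite{sendra2017algebraic}, adapted by appending the rows coming from $I(X)$ and noticing that they decouple from the $\Delta, Y, Z$ columns.
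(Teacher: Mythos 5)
Your proof is correct and takes essentially the same approach as the paper: both exhibit the block lower-triangular structure of $\operatorname{Jac}_p\mathcal{D}_\mathcal{P}(X)$, use smoothness of $t_0 \in X$ to get rank $c$ in the $T$-block, and use $t_0 \notin D$ to get rank $m+r+1$ in the $\Delta,Y,Z$-block before invoking the Jacobian criterion. The only difference is cosmetic: the paper cites Step 1 of Theorem 3.10 of Sendra--Sevilla--Villarino for the nonvanishing of the diagonal entries, whereas you spell out the observation that $\alpha_{iN}(t_0,\delta(t_0))\neq 0$ forces $\delta_i(t_0)\neq 0$, and you work with $c$ local generators of $I(X)$ while the paper keeps the full generating set $g_1,\ldots,g_s$ (which does not change the rank of the $T$-block at a smooth point).
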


\begin{proof}
Let $I(X)=\langle g_1,\ldots,g_s \rangle$. Denote by $\operatorname{Jac}_p(g)$ the Jacobian of $(g_1,\ldots,g_s)$ at $p$ and by $\operatorname{Jac}_p\mathcal{D}_\mathcal{P}(X)$ the Jacobian of $\mathcal{D}_\mathcal{P}(X)$. Write the Jacobian in the block form as
\begin{equation*}
    \operatorname{Jac}_p\mathcal{D}_\mathcal{P}(X)=
   \begin{pmatrix}
    \operatorname{Jac}_p(g) & 0\\
   [\operatorname{Jac}_p\mathcal{D}_\mathcal{P}]_T & [\operatorname{Jac}_p\mathcal{D}_\mathcal{P}]_{\Delta,Y,Z}\\
   \end{pmatrix},
\end{equation*}
where $[\operatorname{Jac}_p\mathcal{D}_\mathcal{P}]_{(\cdot)}$ is the submatrix of $\operatorname{Jac}_p\mathcal{D}_\mathcal{P}(X)$ given by the partial derivatives with respect to the given variables. Since the dimension of $X$ is $n-c$ and $t_0 \in X \setminus X_{\text{sing}}$, then the rank of the upper part of the Jacobian is $c$ at $p=\varphi_Z(t_0)$. In~\cite[Proof of Theorem 3.10: Step 1]{sendra2017algebraic} it is shown that $[\operatorname{Jac}_p\mathcal{D}_\mathcal{P}]_{\Delta,Y,Z}$ is a lower diagonal matrix with the diagonal entries given by $\frac{\partial E_i}{\partial \Delta_i} = e_i(\Delta_i)^{e_i-1} \cdot \alpha_{iD}(T,\Delta_i,\ldots,\Delta_{i-1})$, $\frac{\partial G_j}{\partial Y_j} = y_{jD}(T,\Delta)$ and $\frac{\partial G_Z}{\partial Z} = \operatorname{lcm}(y_{1D},\ldots,y_{rD},\alpha_{1D},\ldots,\alpha_{mD})$. By the definition of the set $D$, the rank of  $[\operatorname{Jac}_p\mathcal{D}_\mathcal{P}]_{\Delta,Y,Z}$ at a point $p=\varphi_Z(t_0)$ where $t_0 \in X \setminus D$ is $m+r+1$. Hence, the rank of $\operatorname{Jac}_p\mathcal{D}_\mathcal{P}(X)$ at $p=\varphi_Z(t_0)$ for $t_0 \in \Omega$ is $c+m+r+1$. Since the maximal rank of $\operatorname{Jac}_p\mathcal{D}_\mathcal{P}(X)$ at a point $p=\varphi_Z(t)$ for $t \in X$ is $c+m+r+1$, by~\cite[Lecture 14, p. 174]{harris1992algebraic} $p$ is nonsingular and so by~\cite[Chapter 9.6, Theorem 8]{cox2013ideals} lies in a unique component of $\mathcal{D}_\mathcal{P}(X)$ of dimension $n-c$.
\end{proof}

\begin{lemma}\label{lemma: X-W path connected}
Let $X \subset \C^n$ be an irreducible variety and $W\subset X$ be a variety of dimension strictly less than $\dim(X)$. Then $X\setminus W$ is path-connected. 
\end{lemma}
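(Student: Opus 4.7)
The plan is to reduce the claim to the classical fact that every irreducible complex quasi-projective variety is path-connected in the Euclidean topology.

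Since $\dim W<\dim X$ and $X$ is irreducible, $W$ is a proper Zariski-closed subset of $X$. Hence $U\coloneqq X\setminus W$ is a non-empty Zariski-open subset of the irreducible variety $X$, and as such it is itself irreducible as a quasi-projective variety, of dimension $\dim X$. It therefore suffices to prove that any irreducible complex quasi-projective variety is path-connected in the Euclidean topology.

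For this, the smooth locus $U_{\mathrm{sm}}$ is a complex manifold which is Zariski-dense in $U$. A standard theorem (see, e.g., Shafarevich, \emph{Basic Algebraic Geometry} II, Chapter VII, or Mumford, \emph{Algebraic Geometry I: Complex Projective Varieties}) guarantees that $U_{\mathrm{sm}}$ is connected in the Euclidean topology; being a complex manifold, it is then path-connected. To extend path-connectivity from $U_{\mathrm{sm}}$ to all of $U$, it suffices to show that each point $p\in U_{\mathrm{sing}}$ can be joined by a path in $U$ to a point of $U_{\mathrm{sm}}$. This is supplied by curve selection for complex analytic sets: near $p$ there exists an analytic arc $\gamma\colon[0,1)\to U$ with $\gamma(0)=p$ and $\gamma((0,1))\subset U_{\mathrm{sm}}$, because $U_{\mathrm{sing}}$ has strictly smaller complex dimension than $U$ at every point.

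The main obstacle is the appeal to the non-trivial analytic input that the smooth locus of an irreducible variety is Euclidean-connected. If one wanted a fully self-contained argument, the cleanest route is induction on $\dim X$: for curves, pass to the smooth projective model (a compact Riemann surface), which is connected by irreducibility and remains so after removing finitely many points; for higher dimensions, join any two given points of $U$ by an irreducible curve $C\subset X$ not contained in $W$ (obtainable by a Bertini-type argument using a pencil of hyperplane sections through the two points), reducing the claim to the already-handled curve case applied to $C\setminus(C\cap W)$.
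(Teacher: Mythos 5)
Your main argument is correct, and it reaches the same conclusion as the paper's proof by a genuinely different route. Both proofs rely on the same core input — that an irreducible complex variety minus a proper Zariski-closed subset is connected in the Euclidean topology (indeed you and the authors both point to Mumford's \emph{Algebraic Geometry I} for this). Where you diverge is in upgrading connectedness to \emph{path}-connectedness. The paper invokes {\L}ojasiewicz's triangulation theorem to endow $\overline{X}$ with a finite CW-complex structure, concluding that $\overline{X}$ is locally path-connected and so every connected open subset is automatically path-connected. You instead observe that $U_{\mathrm{sm}}$ is a connected complex manifold (hence path-connected) and then use curve selection to reach each singular point of $U$ by an analytic arc lying in $U_{\mathrm{sm}}$ for positive time. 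Both are legitimate; the paper's version needs only a single global structural fact and a soft topological lemma, while yours avoids triangulation but trades it for curve selection and an extra step to handle the singular locus. One small point worth flagging in your write-up: curve selection produces an arc landing in $U_{\mathrm{sm}}$ near $p$, and you should say explicitly why the arc stays inside $U$ (this is automatic since $U$ is open in $X$ and $\gamma((0,\varepsilon))\subset U_{\mathrm{sm}}\subset U$, but it deserves a sentence).

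Your proposed ``fully self-contained'' alternative via an induction reducing to curves is a nice idea but is sketchier than the rest: the Bertini-type step asserting that two given points of $U$ can be joined by an irreducible curve $C\subset X$ not contained in $W$ needs care (Bertini with base points, irreducibility of the resulting linear section, and control of $C\cap W$ all require justification), and the curve case itself implicitly uses that the normalization map is a finite surjection so that removing finitely many points preserves path-connectedness. As a heuristic sketch it is fine, but it should not be presented as obviously self-contained.
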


\begin{proof}
We denote by $H\cong\PP^{n-1}$ the hyperplane at infinity of $\C^n$, so that $\PP^n=\C^n\cup H$. Let $\overline{X}\subset\PP^n$ be the projective closure of $X$. In particular $X=\overline{X}\cap\C^n$. Hence, $W\cup H$ is a variety and $X\setminus W= \overline{X} \setminus (W \cup H)$. Therefore, $X\setminus W$ is connected in the classical topology~\cite[Corollary 4.16 p.68]{mumford1995algebraic}. By the Lojasiewicz's theorem~\cite[Theorem 4]{lojasiewicz1964triangulation}, $\overline{X}$ is a finite CW complex. CW complexes are locally contractible~\cite[Proposition A.4]{hatcher2002algebraic}, so in particular, they are locally path-connected. Hence, $\overline{X} \setminus (W \cup H)=X\setminus W$ is a connected open set in a locally path-connected space, so it is path-connected.
\end{proof}

\begin{lemma}\label{lemma: variety of D}
There exists a variety $W\subset X$ of dimension strictly less than $\dim(X)$ such that the set $(D \cup X_{\mathrm{sing}}) \cap X$ is contained in $W$.
\end{lemma}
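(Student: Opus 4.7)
The plan is to handle the two pieces of $(D \cup X_{\mathrm{sing}}) \cap X$ separately. Since $X$ is irreducible, $X_{\mathrm{sing}}$ is already a proper subvariety of $X$, and in particular has dimension strictly less than $\dim X$. So the nontrivial task is to exhibit a proper subvariety of $X$ containing $D \cap X$; then $W$ will be the union of this subvariety with $X_{\mathrm{sing}}$.

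For $D \cap X$, the plan is to pull the problem up to the incidence variety $\mathcal{B}_{\mathcal{P}}(X) = \overline{\phi(X)}$, where the multivalued object $\delta(t)$ becomes a legitimate algebraic datum. Set
\[
L \coloneqq \operatorname{lcm}(\alpha_{1N},\ldots,\alpha_{mN},\alpha_{1D},\ldots,\alpha_{mD},y_{1D},\ldots,y_{rD}) \in \C[T,\Delta]\,,
\]
viewed as an element of $\C[T,\Delta,Y]$. By the very definition of $D$, every $t_0 \in D \cap X$ lifts to a point $(t_0,\delta(t_0),y(t_0)) \in \mathcal{B}_{\mathcal{P}}(X) \cap \mathbb{V}(L)$, so if $\pi_T$ denotes projection onto the $T$-coordinates, then
\[
D \cap X \;\subseteq\; \pi_T\!\bigl(\mathcal{B}_{\mathcal{P}}(X) \cap \mathbb{V}(L)\bigr)\,.
\]
Its Zariski closure $W_0 \coloneqq \overline{\pi_T(\mathcal{B}_{\mathcal{P}}(X) \cap \mathbb{V}(L))}$ is thus a subvariety of $X$ containing $D \cap X$.

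To conclude, I would show $W_0 \subsetneq X$ using the hypothesis in Definition~\ref{defn: restriction of radical parametrization}, which precisely says that $\mathcal{P}$ is well-defined on a dense subset of $X$, i.e., $\mathcal{B}_{\mathcal{P}}(X) \not\subseteq \mathbb{V}(L)$. Combined with the fact that $\mathcal{B}_{\mathcal{P}}(X)$ is irreducible of dimension $\dim X = n - c$ (the restricted analogue of Theorem 3.10(3) in \cite{sendra2017algebraic}, provable by the same Jacobian argument), the intersection with the hypersurface $\mathbb{V}(L)$ is a proper subvariety of dimension $< \dim X$. Since projections do not increase dimension, $\dim W_0 < \dim X$, and $W \coloneqq W_0 \cup X_{\mathrm{sing}}$ is the required variety.

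The main technical concern is the tacit reliance on the dimension and irreducibility of $\mathcal{B}_{\mathcal{P}}(X)$; if that statement is not available at the moment this lemma is applied, one could instead argue branch-by-branch through the radical tower, using at each step that a polynomial nonzero on an irreducible variety $Y$ cuts out a strictly smaller subvariety of $Y$, and that the image of $Y$ under the branch map has dimension equal to $\dim Y$.
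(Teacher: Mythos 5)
Your plan has a circularity that turns out to be fatal as written. The key facts you lean on---that $\mathcal{B}_{\mathcal{P}}(X)$ is irreducible of dimension $n-c$---are precisely what Theorem~\ref{thm: components of Ap(X)}(2) establishes, and that theorem's proof relies (through Theorem~\ref{thm: Bp(X) vs Ap(X)}) on the very lemma you are trying to prove. The inequality $\dim\mathcal{B}_{\mathcal{P}}(X)\le n-c$ can in fact be recovered circularity-free from the containment $\mathcal{B}_{\mathcal{P}}(X)\subseteq\mathcal{A}_{\mathcal{P}}(X)$ and the elementary dimension count for $\mathcal{A}_{\mathcal{P}}(X)$, but irreducibility cannot---and without it a single top-dimensional component of $\mathcal{B}_{\mathcal{P}}(X)$ lying inside $\mathbb{V}(L)$ would make $W_0$ all of $X$. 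You flag this worry yourself, but the ``branch-by-branch'' fallback you gesture at is essentially a re-derivation of the argument in Step~3 of the Sendra--Sevilla--Villarino proof, which is what the paper invokes outright. There is also a smaller but genuine problem with your lifting step: if $t_0\in D\cap X$ because some $\alpha_{iD}$ or $y_{jD}$ vanishes at $(t_0,\delta(t_0))$, then $\delta(t_0)$ or $y(t_0)$ fails to be a finite value, so $(t_0,\delta(t_0),y(t_0))$ is not a point of $\C^{n+m+r}$ and the asserted inclusion $D\cap X\subseteq\pi_T(\mathcal{B}_{\mathcal{P}}(X)\cap\mathbb{V}(L))$ does not follow ``by the very definition of $D$.''

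The paper avoids both issues by not passing through $\mathcal{B}_{\mathcal{P}}(X)$ at all. Step~3 of the proof of \cite[Theorem 3.10]{sendra2017algebraic} already shows, by an unconditional argument in the ambient $\C^n$, that $D$ is contained in a proper algebraic subvariety $W_D\subset\C^n$ of dimension less than $n$ (one may take $W_D=\overline{D}$). The hypothesis built into Definition~\ref{defn: restriction of radical parametrization} says $X\not\subseteq W_D$, so by irreducibility of $X$ the intersection $W_D\cap X$ has dimension strictly less than $\dim X=n-c$; union with $X_{\mathrm{sing}}$, which is a proper subvariety of $X$ as you correctly note, gives $W$. In short, the correct decomposition is yours, but the argument for the $D$-part should cite the already-established algebraicity and dimension bound for $\overline{D}$ in $\C^n$ rather than try to re-derive it via the incidence variety, whose properties are not yet available at this point in the development.
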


\begin{proof}
By Step 3 of the proof of~\cite[Theorem 3.10]{sendra2017algebraic}, the set $D$ is contained in an algebraic variety $W_D$ of dimension less than $n$. We choose $W_D$ to be the Zariski closure of $D$. By assumption in Definition~\ref{defn: restriction of radical parametrization}, $X$ is not contained in $W_D$. The singular locus $X_{\mathrm{sing}}$ of $X$ also has dimension strictly less than $n-c$. The irreducibility of $X$ then implies that $(D \cup X_{\mathrm{sing}}) \cap X$ has dimension strictly less than $n-c$ by~\cite[Thereom 1.19]{Shafarevich}.
\end{proof}

\begin{proof}[Proof of Theorem~\ref{thm: Bp(X) vs Ap(X)}]
For contradiction, assume that there exist two different irreducible components $\Gamma_1$ and $\Gamma_2$ of $\mathcal{A}_\mathcal{P}(X)$ that contain $\mathcal{B}_\mathcal{P}(X)$ and let $\Sigma_i\coloneqq\pi_Z^{-1}(\Gamma_i)$. By Lemma~\ref{lemma: variety of D}, there exists a variety $W \subset X$ that contains $(D \cup X_{\mathrm{sing}}) \cap X$ and whose dimension is at most $n-c-1$. Let $\Omega'\coloneqq\mathcal{X}\setminus W \subset \Omega$.  

Since $\Omega'$ is dense in $X$, there exist $t_1,t_2 \in \Omega'$ such that $p\coloneqq\varphi_Z(t_1) \in \Sigma_1$ and $q\coloneqq\varphi_Z(t_2) \in \Sigma_2$. By Lemma~\ref{lemma: Omega maps to nonsingular points}, either point lies in a unique component of $\mathcal{D}_\mathcal{P}(X)$. By Lemma~\ref{lemma: X-W path connected}, there is a path $\gamma$ in $\Omega'$ between $t_1$ and $t_2$. Then $\varphi_Z(\gamma)$ is connected by the continuity of $\varphi_Z$.

Let $U$ be the union of components of $\mathcal{D}_\mathcal{P}(X)$ that are different from $\Sigma_1$. Both $\Sigma_1 \cap \varphi_Z(\gamma)$ and $U \cap \varphi_Z(\gamma)$ are closed in $\varphi_Z(\gamma)$, because $\Sigma_1$ and $U$ are closed in their ambient topology. Furthermore, these two sets cover $\varphi_Z(\gamma)$ and are nonempty. Therefore, since $\varphi_Z(\gamma)$ is connected, they have a nonempty intersection. However, any point that is contained in the intersection of two irreducible components of a variety is singular~\cite[Chapter 9.6 Theorem 8]{cox2013ideals}, so $\varphi_Z(\gamma)$ contains singular points. This is a contradiction to Lemma~\ref{lemma: Omega maps to nonsingular points}.
\end{proof}

\begin{theorem}\label{thm: components of Ap(X)}
\begin{enumerate}
    \item[]
    \item $\mathcal{A}_\mathcal{P}(X)$ is an equidimensional variety of dimension $n-c$. 
    \item $\mathcal{B}_\mathcal{P}(X)$ is an irreducible variety of dimension $n-c$. 
    \item $\mathcal{V}_\mathcal{P}(X)$ is an irreducible variety of dimension $n-c$.
\end{enumerate}
\end{theorem}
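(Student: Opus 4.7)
My plan is to follow the strategy of the unrestricted Theorem 3.11 in~\cite{sendra2017algebraic}, using Theorem~\ref{thm: Bp(X) vs Ap(X)} and Lemma~\ref{lemma: Omega maps to nonsingular points} as the main inputs and the finiteness of the projection onto the $T$-coordinates as the main technical tool.

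For part (2), I would argue that $\phi$ is well defined on a dense open subset of $X$ (by the hypothesis in Definition~\ref{defn: restriction of radical parametrization} that the denominator locus does not contain $X$, together with the fixing of branches of the $\delta_i$), so $\phi(X)$ is an irreducible constructible set and $\mathcal{B}_\mathcal{P}(X)=\overline{\phi(X)}$ is irreducible. To get the dimension, I would use that projection onto the $T$-coordinates is a left inverse of $\phi$, making $\phi|_X$ birational onto its image, so $\dim\mathcal{B}_\mathcal{P}(X)=\dim X=n-c$. For part (3), I would observe that $\mathcal{V}_\mathcal{P}(X)=\overline{\pi(\mathcal{B}_\mathcal{P}(X))}$ is the closure of the image of the irreducible variety $\mathcal{B}_\mathcal{P}(X)$, hence irreducible, and argue via the chain rule that the Jacobian hypothesis in Definition~\ref{defn: restriction of radical parametrization} forces the differential of $y|_X\colon X\to\C^r$ to have rank $n-c$ at a general point of $X$. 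Consequently $y|_X$ is generically finite onto its image, and $\dim\mathcal{V}_\mathcal{P}(X)=n-c$.

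For part (1), my plan is to bound the dimension of each irreducible component of $\mathcal{A}_\mathcal{P}(X)$ from above and below by $n-c$. For the upper bound, I would show that the projection $\mathcal{A}_\mathcal{P}(X)\to\C^n$ onto $T$ is finite, with fiber size at most $d_1\cdots d_m$ (since each $\delta_i$ satisfies $\Delta_i^{d_i}=\alpha_i(T,\Delta_1,\ldots,\Delta_{i-1})$ and each $Y_j$ is then determined by $G_j=0$), and that its image is contained in $X$, whence every component has dimension at most $n-c$. For the lower bound, I would use that every irreducible component of $\mathcal{A}_\mathcal{P}(X)$ is contained in an irreducible component $Y$ of $\mathcal{A}_\mathcal{P}$, which by Theorem~\ref{thm: properties of Bp}(2) has dimension $n$; since $Y\to\C^n$ is finite and surjective, the preimage of the irreducible subvariety $X\subset\C^n$ is equidimensional of dimension $\dim X=n-c$ by the going-down property of finite morphisms, giving the desired lower bound upon combining with $\mathcal{A}_\mathcal{P}(X)\subseteq\mathcal{A}_\mathcal{P}\cap(X\times\C^{m+r})$.

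The main obstacle will be justifying rigorously the inclusion (or near-equality) $\mathcal{A}_\mathcal{P}(X)\subseteq\mathcal{A}_\mathcal{P}\cap(X\times\C^{m+r})$ and ensuring the lower bound applies to every component of $\mathcal{A}_\mathcal{P}(X)$, not merely the distinguished one. For the distinguished component, Theorem~\ref{thm: Bp(X) vs Ap(X)} and the smoothness computation of Lemma~\ref{lemma: Omega maps to nonsingular points} (which shows that $\varphi_Z(t_0)$ for $t_0\in\Omega$ lies in a unique component of $\mathcal{D}_\mathcal{P}(X)$ of dimension $n-c$) already settle it. For the other components, the hypothesis of Definition~\ref{defn: restriction of radical parametrization} that the denominator locus does not contain $X$ will be essential, as it prevents spurious components from being introduced when eliminating $Z$ from $\mathcal{D}_\mathcal{P}(X)$.
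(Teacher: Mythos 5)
Your overall architecture differs from the paper's in a way that creates a genuine gap in part~(1). The paper does \emph{not} try to prove the dimension bounds for $\mathcal{A}_\mathcal{P}(X)$ directly; it works at the level of $\mathcal{D}_\mathcal{P}(X)$, which you never mention. This matters because $\mathcal{D}_\mathcal{P}(X)=\mathcal{D}_\mathcal{P}\cap(X\times\C^{m+r+1})$ is an honest intersection (not a closure of a projection), so the Affine Dimension Theorem applies to every component and gives $\dim\ge n-c$ immediately, while the upper bound $\dim\le n-c$ follows from the finiteness of $(t,\delta,y,z)\mapsto t$ on $\mathcal{D}_\mathcal{P}(X)$, which is guaranteed by the equation $G_Z=0$ (it forbids the leading coefficients of the $E_i$ and $G_j$ from vanishing). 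Only after establishing equidimensionality of $\mathcal{D}_\mathcal{P}(X)$ does the paper push forward via the finite one-to-one $\pi_Z$ to get equidimensionality of $\mathcal{A}_\mathcal{P}(X)=\overline{\pi_Z(\mathcal{D}_\mathcal{P}(X))}$. Your plan instead tries to argue (a) that $\mathcal{A}_\mathcal{P}(X)\to\C^n$ is finite, and (b) that every component of $\mathcal{A}_\mathcal{P}(X)$ inherits a lower bound from components of $\mathcal{A}_\mathcal{P}$. Both steps are broken: (a) fails because once $Z$ is eliminated the defining ideal of $\mathcal{A}_\mathcal{P}(X)$ no longer forces the leading coefficients of the $E_i$ to be nonzero, so the fibers over $T$ are not manifestly finite; (b) fails because elimination does not commute with intersection, so $\mathcal{A}_\mathcal{P}(X)\subseteq\mathcal{A}_\mathcal{P}\cap(X\times\C^{m+r})$ can be a \emph{strict} inclusion, and a component of the left-hand side may be a proper, lower-dimensional subvariety of a component of the right-hand side. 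You flag this yourself as ``the main obstacle,'' but your remedy (the hypothesis that the denominator locus does not contain $X$) only keeps $X$ from disappearing entirely; it does not prevent elimination from producing spurious lower-dimensional pieces. Going-down is also problematic since it needs normality of the base, which $X$ need not satisfy.

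There are smaller issues in part~(2). The claim that ``$\phi(X)$ is an irreducible constructible set'' cannot be justified by Chevalley, because $\phi$ is a radical parametrization and not a regular morphism. The irreducibility of $\mathcal{B}_\mathcal{P}(X)$ must come from Theorem~\ref{thm: Bp(X) vs Ap(X)} (as the paper does): $\mathcal{B}_\mathcal{P}(X)$ sits inside a unique component $\Gamma$ of $\mathcal{A}_\mathcal{P}(X)$, and once both are shown to have dimension $n-c$ one concludes $\mathcal{B}_\mathcal{P}(X)=\Gamma$. Your dimension argument via the $T$-projection being a left inverse of $\phi$ is sound and slightly more streamlined than the paper's (the paper instead projects to a generic $(n-c)$-plane and invokes~\cite[Ch.\ 9.5, Prop.\ 5]{cox2013ideals}), but it needs the containment in an $(n-c)$-dimensional variety, i.e.\ part~(1), for the upper bound. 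Part~(3) of your plan is essentially the paper's argument, phrased via the chain rule rather than the inverse function theorem; that part is fine.
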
 

\begin{proof}
Consider the projection $\pi$ from $\mathcal{D}_\mathcal{P}(X)$ to $X$ given by $\pi: (t,\delta,y,z)\mapsto t$. This map has finite fibers: the polynomial $G_Z$ forces all polynomials $E_i$ and $G_j$ to be non-zero. Furthermore, each $E_i$ contains a single variable $\Delta_i$, so $E_i$ has finitely many solutions in $\Delta_i$. Similarly, each $G_j$ and $G_Z$ have finitely many solutions in  $Y_j$ and $Z$, respectively. Therefore, for any $t \in \pi (\mathcal{D}_\mathcal{P})(X)$, we have $\dim (\pi^{-1}(t))=0$.

Take any irreducible component $D_i(X)$ of $\mathcal{D}_\mathcal{P}(X)$. Since $\pi(t,\delta,y,z) \in X$ for all $(t,\delta,y,z) \in D_i(X)$, it follows that $\dim (\pi (D_i(\mathcal{X})))\leq n-c$. It is a quasi-projective variety and the map $\pi$ is regular.
To apply~\cite[Corollary 11.13]{harris1992algebraic}, take an open set $U \subset \pi(D_i(X)) \subset \overline{\pi(D_i(X))}$ and then
\[
\dim D_i(X)=\dim \pi^{-1}(U)=\dim (U)+\min\{\dim (\pi^{-1}(t))|t \in U\}\leq n-c\,.
\]

On the other hand, $D_i(X)$ is an irreducible component of the intersection of the variety $\mathcal{D}_\mathcal{P}$ of dimension $n$ and the variety $X\times \C^{m+r+1}$ of dimension $n+m+r+1-c$, so by the Affine Dimension Theorem~\cite[Proposition I.7.1]{hartshorneAlgebraic}
\[
\dim D_i(X)\geq n+(n+m+r+1-c)-(n+m+r+1) = n-c\,.
\]
Hence, every irreducible component of $\mathcal{D}_\mathcal{P}(X)$ has dimension $n-c$ and so does $\mathcal{D}_\mathcal{P}(X)$.

(1) Since $\pi_Z$ is a finite map, the dimension of every irreducible component of $\mathcal{D}_\mathcal{P}(X)$ is $n-c$ and $\mathcal{A}_\mathcal{P}(X)=\overline{\pi_Z(\mathcal{D}_\mathcal{P}(X))}$, then again by~\cite[Corrollary 11.13]{harris1992algebraic} every irreducible component of $\mathcal{A}_\mathcal{P}(X)$ has dimension $n-c$.

(2) By Theorem~\ref{thm: Bp(X) vs Ap(X)}, the variety $\mathcal{B}_\mathcal{P}(X)$ lies in a unique irreducible component of $\mathcal{A}_\mathcal{P}(X)$, which by part (1) has dimension $n-c$. Therefore, $\dim \mathcal{B}_\mathcal{P}(X) \leq n-c$. Consider the coordinate projection $\pi^*$ from $\C^{n+m+r}$ to $\C^{n}$ given by $(t, \delta, y)\mapsto t$. 
Then $\Omega \subset \pi^*(\mathcal{B}_\mathcal{P}(X)) \subset X \subset \C^n$. Since $\Omega$ is dense in $X$ by Lemma~\ref{lemma: variety of D}, then also $\pi^*(\mathcal{B}_\mathcal{P}(X))$ is dense in $X$.  
By~\cite[Proposition 5 in Chapter 9.5]{cox2013ideals}, there exists a subspace $H \subset \C^n$ of dimension $n-c$ such that the projection of $X$ onto $H$ is Zariski dense. Thus also the projection of $\pi^*(\mathcal{B}_\mathcal{P}(X))$ onto $H$ is Zariski dense and $\dim \mathcal{B}_\mathcal{P}(X) \geq n-c$. Therefore $\dim \mathcal{B}_\mathcal{P}(X)=n-c$ and $\mathcal{B}_\mathcal{P}(X)$ is an irreducible component of $\mathcal{A}_\mathcal{P}(X)$.

(3) Let $W=\overline{\operatorname{Im}(\mathcal{P}(X))}$. By assumption, there exists a dense subset of $X$, where $\mathcal{P}$ is well-defined and the rank of the Jacobian of $\mathcal{P}$ restricted to $X$  is $n$. Then by the inverse function theorem for manifolds~\cite[Theorem 5.11]{lee2013smooth}, at every smooth point $t_0$ in this dense subset of $X$, there exist connected neighbourhoods $U_0$ of $t_0$ and $V_0$ of $\mathcal{P}(t_0)$ such that $U_0$ and $V_0$ are diffeomorphic, so a dense subset of $X$ is diffeomorphic to a dense subset of  $W$ and hence $\dim W=n-c$. 
 
Since $W \subset \mathcal{V}_\mathcal{P}(X)$, $\dim \mathcal{V}_\mathcal{P}(X) \geq n-c$. On the other hand, $\mathcal{V}_{\mathcal{P}}(X)=\overline{{\pi(\mathcal{B}_\mathcal{P}(X))}}$, where $\pi$ is given in Definition~\ref{defn: phi and incidence variety} and $\mathcal{B}_\mathcal{P}(X)$ is irreducible of dimension $n-c$. Thus $\mathcal{V}_{\mathcal{P}}(X)$ is irreducible and by~\cite[Corrollary 11.13]{harris1992algebraic}, $\dim \mathcal{V}_{\mathcal{P}}(X)\leq n-c$. This proves that $\dim \mathcal{V}_{\mathcal{P}}(X)= n-c$.
\end{proof}

\begin{example}\label{ex: ellipse using implicit parametrization}
Continuing Example~\ref{ex: ellipse using radical parametrization}, let us give a corresponding radical parametrization when $X=\mathbb{V}(g)\subset\C^2$ is given implicitly by $g=x_1^2+4x_2^2-1$. As before, we want to impose the condition that $\nabla f_u$ is perpendicular to the tangent space $T_xX$. This condition is equivalent to saying that the augmented Jacobian matrix 
\begin{equation}\label{eq: rank condition}
\begin{pmatrix}
(u_1-x_1)^2 & (u_2-x_2)^2\\
x_1 & 4x_2
\end{pmatrix}
\end{equation}
has rank at most $1$ and motivates the construction of the map $\phi\colon\C^3\to\C^4$ defined by
\[
\phi(x_1,x_2,s)=(x_1,x_2,x_1+\sqrt{sx_1},x_2+\sqrt{4sx_2})\,.
\]
Here we choose the principal branch in both coordinates but, in fact, for any combination of branches, the rank of the Jacobian of the corresponding map is $3$ on a dense subset of $\C^3$. Observe that each branch corresponds to one of the four half-lines displayed on the right of Figure \ref{fig: ellipse3norm}. 

We introduce the parameters $\delta_1$ and $\delta_2$ via the relations $\delta_1^2=sx_1$ and $\delta_2^2=4sx_2$. The point $(1,1,1)$ is not a branch point for either $\delta_i$ and we can fix any choice of evaluation of the form $\delta_1(1,1,1)=\pm 1\eqqcolon a_1$ and $\delta_2(1,1,1)=\pm 2\eqqcolon a_2$.
The corresponding radical parametrization becomes
\[
\mathcal{P}=\{(x_1,x_2,x_1+\delta_1,x_2+\delta_2),\ \mathbb{T},\ \delta_1(1,1,1)=a_1,\ \delta_2(1,1,1)=a_2\}\,,
\]
where $\mathbb{T}\coloneqq[k_0 \subset k_0(\delta_1) \subset k_0(\delta_1,\delta_2)]$ is radical tower over $k_0=\C(x_1,x_2,s)$.

The pair $(\mathcal{P},X)$ satisfies the conditions from Definition~\ref{defn: restriction of radical parametrization}, i.e., the Jacobian of  $\mathcal{P}$ has rank $3$ on a dense subset of $X$ and $\mathbb{V}(sx_1x_2)$ does not contain $X$. Hence, we can restrict $\mathcal{P}$ to $X$ and the resulting variety $\mathcal{V}_\mathcal{P}(X)$ is irreducible of dimension $2$, and equals the radical variety $\mathcal{V}_\mathcal{P}$ from Example~\ref{ex: ellipse using radical parametrization}. As in Example~\ref{ex: ellipse using radical parametrization}, the radical variety $\mathcal{V}_\mathcal{P}(X)$ does not depend on the choice of evaluation of $\delta_i$'s.
\end{example}

\section{The algebraic degree of optimization over a variety}\label{sec: def optimization degree}

Let $X^\mR\subset\R^n$ be a real algebraic variety defined by the ideal $\mathcal{I}(X^\mR)=\langle g_1,\ldots,g_s\rangle\subset\R[x]$. We consider the complex variety $X$ whose ideal is $I(X)\coloneqq\mathcal{I}(X^\mR)\subset\C[x]$. Without loss of generality, we assume that $I(X)$ is a prime ideal, namely that $X$ is an irreducible variety. Moreover, let $c=\mathrm{codim}(X)$.

Let $f_u\colon\C^n\times\C^n\to\C$ be a parametric objective function determined by a data point $u\in\C^n$. 
We require the function $f_u(x)$ to be holomorphic. We study the optimization problem
\begin{equation}\label{eq: optimization problem, general form}
\min_{x\in X^\mR} f_u(x)\,.
\end{equation}

Our prototype examples are
\begin{enumerate}
    \item ED optimization, where we fix a positive definite quadratic form $q^\mR$ on $\R^n$, we extend it to a polynomial function $q\colon\C^n\to\C$ and we study the squared distance function $f_u(x)=d_u(x)=q(u-x)$ restricted to the complex zero locus $X$ of a real variety $X^\mR\subset\R^n$.
    \item ML optimization, where the objective function is the log-likelihood function $f_u(x)=\ell(x\mid u)=\sum_{i=1}^n u_i\log(p_i(x))$ and the variety $X$ is the Zariski closure of a statistical model, i.e., a real variety $X^\mR$ intersected with the probability simplex $\Delta_{n-1}$.
\end{enumerate}

Both these prototype examples satisfy the following properties.

\begin{definition}\label{def: solvable}
A holomorphic function $f_u\colon\C^n\times\C^n\to\C$ is {\em gradient-solvable in $u$} if the following conditions hold:
\begin{itemize}
    \item for all $i\in[n]$, the partial derivative $\frac{\partial f_u}{\partial x_i}$ with respect to $x_i$ is in $\C(u_i,x)$ and it depends on $u_i$, and
    \item for all $i\in[n]$ and for all $g\in\C[x]$, the equation $\frac{\partial f_u}{\partial x_i}=g$ is solvable in $u_i$ by radicals after derationalization 
\end{itemize}
\end{definition}

We denote by $\nabla f_u$ the gradient with respect to $x$. In~\cite{horobet2020data}, the authors consider objective functions such that $u$ can be expressed as a rational function in $(x,\nabla f_u)$. This class of functions is contained in the set of gradient-solvable functions.

When $f_u\colon\C^n \times \C^n \to \C$ is gradient-solvable in $u$, the problem of determining the critical points of $f_u$ for a fixed $u \in \C^n$ on an algebraic variety $X\subset\C^n$ can be tackled with algebraic tools.
A point $x\in X_\mathrm{sm}$ is {\em critical} for $f_u$ if the vector $\nabla f_u(x)$ is orthogonal to the tangent space $T_xX$. Our goal is to describe the critical points of the function $f_u$ for a general data point $u$. If $X$ is the complex zero locus of a real algebraic variety $X^\mR\subset \R^n$ and $f_u$ is the extension to the complex  numbers of a real objective function $f_u\colon \R^n\to\R$, then the set of real critical points of $f_u$ on $X$ contains the constrained local extrema of the real objective function $f_u$. 

Recall that the ideal of $X$ is $\langle g_1,\ldots,g_s\rangle\subset\C[x]$. We indicate with $g$ the vector $(g_1,\ldots,g_s)$. If $f_u$ is a gradient-solvable function, then the entries of $\nabla f_u$ are rational functions and hence the entries of the augmented Jacobian matrix
$$
J(f_u,g)=
\begin{pmatrix}
\nabla f_u\\
\mathrm{Jac}(g)
\end{pmatrix}
$$
are not necessarily polynomial. Let $\left(\frac{\partial f_u}{\partial x_i}\right)_{D}$ denote the denominator of $\frac{\partial f_u}{\partial x_i}$. We define the derationalized augmented Jacobian matrix as
$$
\tilde{J}(f_u,g)=
J(f_u,g)
\cdot
\text{diag}\left(\left(\frac{\partial f_u}{\partial x_1}\right)_{D}, \ldots, \left(\frac{\partial f_u}{\partial x_n}\right)_{D} \right)\,.
$$
The critical ideal of $X$ with respect to $f_u$ is the following ideal in $\C[x]$: 
\begin{equation}\label{eq: critical ideal sec 3}
\left(I(X)+\left\langle\mbox{$(c+1)\times(c+1)$ minors of $\tilde{J}(f_u,g)$}\right\rangle\right)\colon \left({I(X_{\mathrm{sing}})} \cdot \left\langle \prod_{i=1}^n\left(\frac{\partial f_u}{\partial x_i}\right)_{D} \right\rangle \right)^\infty\,.
\end{equation}

\begin{definition}\label{def: optimization correspondence}
Let $(f_u,X)$ be compatible as in Definition~\ref{compatible}.
The {\em optimization correspondence} of $X$ associated with $f_u$ is the variety
\[
\mathcal{F}_X\coloneqq\overline{\{(x,u)\in (X\times\C^n)\setminus H \mid\mbox{$x\in X_\mathrm{sm}$, $x$ critical point of $f_u$}\}}\,,
\]
where $H$ is defined by $\left(\frac{\partial f_u}{\partial x_1}\right)_{D}\cdots\left(\frac{\partial f_u}{\partial x_n}\right)_{D}=0$.
\end{definition}

\begin{lemma}\label{lem: ideal F-correspondence}
Consider $f_u$ and $X$ as in Definition \ref{def: optimization correspondence}. Then $\mathcal{F}_X$ coincides with the variety of the ideal \eqref{eq: critical ideal sec 3} in the larger ring $\C[x,u]$.
\end{lemma}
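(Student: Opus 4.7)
The plan is to verify the set-theoretic equality $\mathcal{F}_X = \mathbb{V}(\mathcal{J} : \mathcal{K}^\infty)$ in $\C^n \times \C^n$, where
\[
\mathcal{J} \coloneqq I(X) + \langle \mbox{$(c+1)\times(c+1)$ minors of $\tilde{J}(f_u,g)$} \rangle,
\qquad
\mathcal{K} \coloneqq I(X_{\mathrm{sing}}) \cdot \Big\langle \prod_{i=1}^n \Big(\tfrac{\partial f_u}{\partial x_i}\Big)_D \Big\rangle,
\]
viewed as ideals in $\C[x,u]$. The argument splits into a rank interpretation, a derationalization check, and a saturation step.

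First I would pin down the rank interpretation of criticality. At any $x \in X_{\mathrm{sm}}$ the Jacobian $\mathrm{Jac}(g)(x)$ has rank exactly $c$, and its row span is the normal space $(T_x X)^\perp$ with respect to the standard inner product. Therefore $x$ is critical for $f_u$ precisely when $\nabla f_u(x)$ belongs to this row span, equivalently when the augmented matrix $J(f_u,g)(x)$ has rank at most $c$, i.e., when all its $(c+1)\times(c+1)$ minors vanish at $(x,u)$. Next, since $\tilde{J}(f_u,g) = J(f_u,g) \cdot D$ with $D = \mathrm{diag}\big((\partial f_u/\partial x_1)_D, \ldots, (\partial f_u/\partial x_n)_D\big)$, at any $(x,u) \notin H$ the matrix $D$ is invertible and the ranks of $J$ and $\tilde{J}$ coincide. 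The entries of $\tilde{J}$ are honest polynomials in $\C[x,u]$, so $\mathcal{J}$ is a legitimate polynomial ideal, and $\mathbb{V}(\mathcal{J}) \setminus (\mathbb{V}(I(X_{\mathrm{sing}})) \cup H)$ is precisely the set of $(x,u)$ with $x \in X_{\mathrm{sm}}$, $(x,u) \notin H$, and $x$ critical for $f_u$.

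The last ingredient is the saturation. Since $\mathbb{V}(\mathcal{K}) = (X_{\mathrm{sing}} \times \C^n) \cup H$, the standard identification
\[
\mathbb{V}(\mathcal{J} : \mathcal{K}^\infty) = \overline{\mathbb{V}(\mathcal{J}) \setminus \mathbb{V}(\mathcal{K})}
\]
combined with the previous step immediately yields $\mathbb{V}(\mathcal{J} : \mathcal{K}^\infty) = \mathcal{F}_X$. The only point that requires care is this saturation identity: one must ensure that saturating jointly by $I(X_{\mathrm{sing}})$ and by the product of denominators removes both the spurious components supported on $X_{\mathrm{sing}} \times \C^n$ and those supported on $H$, without cutting into the genuine component $\mathcal{F}_X$. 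This follows from the factorization $\mathbb{V}(J_1 J_2) = \mathbb{V}(J_1) \cup \mathbb{V}(J_2)$ applied to the product defining $\mathcal{K}$, after which Steps one and two make the identification with $\mathcal{F}_X$ automatic.
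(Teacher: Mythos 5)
Your argument is correct and follows essentially the same route as the paper's: you identify the good locus $\mathbb{V}(\mathcal{J})\setminus\mathbb{V}(\mathcal{K})$ with the set of $(x,u)$ with $x\in X_{\mathrm{sm}}$, $(x,u)\notin H$, $x$ critical for $f_u$ (via the invertibility of the diagonal matrix $D$ off $H$ and the rank-$c$ property of $\mathrm{Jac}(g)$ at smooth points), and then invoke $\mathbb{V}(\mathcal{J}:\mathcal{K}^\infty)=\overline{\mathbb{V}(\mathcal{J})\setminus\mathbb{V}(\mathcal{K})}$. The paper's proof does the same thing in slightly different language, quoting the density of the good locus inside the saturated variety rather than naming the saturation identity explicitly.
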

\begin{proof}
Consider a point $(x,u)$ such that $x\in X_\mathrm{sm}$, $(x,u)\notin H$ and $x$ is a critical point of $f_u$. Then clearly $(x,u)$ lies in the variety of \eqref{eq: critical ideal sec 3}. Taking closures we get that $\mathcal{F}_X$ is contained in the variety of \eqref{eq: critical ideal sec 3}. Conversely, let $(x,u)$ be in the variety of \eqref{eq: critical ideal sec 3}. The points which are in the complement of $(X_\mathrm{sing}\times\C^n)\cup H$ are dense in that variety, so we may assume that $(x,u)\in (X_\mathrm{sm}\times\C^n)\setminus H$. The condition in \eqref{eq: critical ideal sec 3} implies that $\nabla f_u(x)$ is in the row span of $\mathrm{Jac}(g)$, hence $x$ is a critical point of $f_u$, namely $(x,u)\in\mathcal{F}_X$.\qedhere
\end{proof}

We will apply the techniques introduced in Section \ref{sec: prelim radical} and show that, if $X$ is radically parametrized and $f_u$ is gradient-solvable in $u$, then the number of complex critical points of the function $f_u\colon X\to\C$ is described using $\mathcal{F}_X$ and is constant for general data point $u$. This will lead to the notion of algebraic degree of $X$ with respect to the function $f_u$.

Fix $I(X)=\langle g_1,\ldots,g_s\rangle\subset\C[x]$ and let $g$ be the vector $(g_1,\ldots,g_s)$. Let $(x,u)$ be a point in $\mathcal{F}_X$. Eq.~(\ref{eq: critical ideal sec 3}) implies that $(\nabla_xF)_N$ is a linear combination of the rows of the matrix
\[
\widetilde{\mathrm{Jac}}(g)\coloneqq\mathrm{Jac}(g)\cdot\text{diag}\left((\partial f_u/\partial x_1)_{D},\ldots, (\partial f_u/\partial x_n)_{D} \right)\,,
\]
and the rank of $\widetilde{\mathrm{Jac}}(g)$ is $c$. Similar to \cite[Definition 2]{grossMaximum}, we can construct a new $c\times n$ matrix $M=(m_{ij})$ of entries in $\C[x]$, where each row of $M$ is a random linear combination of the rows of $\widetilde{\mathrm{Jac}}(g)$. All in all, there exist complex numbers $s_1,\dots,s_c$ such that
\begin{equation}\label{eq: gradient vs Jacobian}
\left(\frac{\partial f_u}{\partial x_j}(x,u)\right)_N=\sum_{i=1}^c s_i\,m_{ij}(x)\quad\forall\,j\in\{1,\ldots,n\}\,.
\end{equation}
By assumption, Eq.~\eqref{eq: gradient vs Jacobian} is solvable in $u_j$ for all $j$, so for a given branch we can write
$u_j=h_{j}(x,s)$ for some $h_{j}(x,s) \in \overline{\C(x,s)}\setminus \{0\}$. We will denote the vector $[h_{1}(x,s),\dots,h_{n}(x,s)]$ by $h(x,s)$.

Define the radical tower
\[
\mathbb{T}\coloneqq[k_0\coloneqq\C(x,s) \subset \dots \subset k_m\coloneqq \C(x,s)(\delta_1,\dots,\delta_m)]\,,
\]
where $k_m$ is the smallest field that contains all $h_j(x,s)$ and $\delta_i^{d_i}=\alpha_i$ for some $\alpha_i \in k_{i-1}$ and $d_i\in \mathbb{N}$. Since $\alpha_i$ does not need to be a polynomial, we can write 
\begin{equation}\label{eq: delta generators}
    \delta_i^{d_i}=\frac{\alpha_{iN}}{\alpha_{iD}}.
\end{equation}

Let $\phi\colon \C^{n+c}\to \C^{2n
}$ be the map $(x,s)\mapsto (x_1,\dots,x_n,h_1(x,s),\dots,h_n(x,s))$. For $\phi$ to define a radical parametrization, we need to show that the Jacobian of $\phi$ has rank $n+c$ . The Jacobian of $\phi$ has block form, where the top left block is the identity matrix. It is enough to show that the lower right block, which is the Jacobian of $\phi$ with respect to $s$, has rank $c$. Fix $x \in X_{\text{sm}}$. Then from Eq.~(\ref{eq: gradient vs Jacobian}) it follows that 
\begin{equation} \label{eq: composition}
\sum_{i=1}^c s_i\,m_{ij}(x) = \nabla f_u(x) \circ \phi.
\end{equation}
The Jacobian of the left-hand side of~(\ref{eq: composition}) with respect to $s$ is $M$, which has rank $c$. Since the Jacobian of the composition of functions is the product of Jacobians, this implies that also the Jacobian of $\phi$ with respect to $s$ has rank $c$.

\begin{definition}\label{compatible}
Consider a holomorphic function $f_u\colon\C^n\times\C^n \to \C$ gradient-solvable in $u$ and let $X\subset\C^n$ be an algebraic variety. The pair $(f_u,X)$ is called {\em compatible} if for every branch of \eqref{eq: gradient vs Jacobian}, there exists a radical parametrization $\phi$ as above such that $\operatorname{lcm}(\alpha_{1N},\ldots,\alpha_{mN},\alpha_{1D},\ldots,\alpha_{mD},\phi_{1D},\ldots,\phi_{2nD})$ does not contain $X \times \C^c$. 
\end{definition}

\begin{theorem}\label{prop: X unirational, f-correspondence parametrizable}
Let $X\subset\C^n$ be an irreducible variety of codimension $c$, and consider a holomorphic function $f_u$ gradient-solvable in $u$ such that $(f_u,X)$ is compatible. The optimization correspondence $\mathcal{F}_X$ is an equidimensional variety of dimension $n$.
\end{theorem}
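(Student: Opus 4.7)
The plan is to realize $\mathcal{F}_X$ as a finite union of restricted radical varieties, one per branch choice, and apply Theorem~\ref{thm: components of Ap(X)} to each piece. For each branch in the radical solution of~(\ref{eq: gradient vs Jacobian}), the construction preceding Definition~\ref{compatible} already produces a radical parametrization $\phi\colon\C^{n+c}\to\C^{2n}$, $(x,s)\mapsto(x,h(x,s))$, whose Jacobian has rank $n+c$ on a dense subset of $X\times\C^c$ via the block argument sketched above (identity in $x$, and~(\ref{eq: gradient vs Jacobian}) forces rank $c$ in the $s$-block at smooth $x$). The compatibility hypothesis of Definition~\ref{compatible} is exactly the non-vanishing condition on the lcm of numerators and denominators of the tower required by Definition~\ref{defn: restriction of radical parametrization}, so the pair $(\phi, X\times\C^c)$ fits that setup, viewing $X\times\C^c\subset\C^{n+c}$ as an irreducible subvariety of codimension $c$.

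Theorem~\ref{thm: components of Ap(X)}(3), applied with ambient parameter space of dimension $n+c$ and constraining subvariety of codimension $c$, then yields that for each branch the radical variety $\mathcal{V}_\mathcal{P}(X\times\C^c)\subset\C^{2n}$ is irreducible of dimension $(n+c)-c=n$. It remains to establish the set-theoretic identity
\[
\mathcal{F}_X \;=\; \bigcup_{\text{branches}} \mathcal{V}_\mathcal{P}(X\times\C^c).
\]
The inclusion $\supseteq$ is essentially by construction: a point $(x, h(x,s))$ with $x\in X_\mathrm{sm}$ and $(x,s)$ outside the appropriate bad locus satisfies~(\ref{eq: gradient vs Jacobian}) by design, so $\nabla f_u(x)$ lies in the row span of $\widetilde{\mathrm{Jac}}(g)(x)$ and hence $(x,u)\in\mathcal{F}_X$. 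For $\subseteq$, any smooth critical pair $(x,u)$ outside $H$ yields some $s\in\C^c$ from~(\ref{eq: gradient vs Jacobian}), and gradient-solvability in $u$ recovers $u=h(x,s)$ along some branch, placing $(x,u)$ in the corresponding $\mathcal{V}_\mathcal{P}(X\times\C^c)$. Since $\mathcal{F}_X$ is the Zariski closure of such smooth critical pairs and each $\mathcal{V}_\mathcal{P}(X\times\C^c)$ is irreducible of dimension $n$, the finite union is equidimensional of dimension $n$.

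The only real subtlety is the notational bookkeeping between Section~\ref{sec: prelim radical} (parameter space of dimension ``$n$'', constraining variety of codimension ``$c$'') and the present setting, where the parameter space has dimension $n+c$ and the constraining variety $X\times\C^c$ still has codimension $c$. Once this identification is made explicit, Theorem~\ref{thm: components of Ap(X)} does the heavy lifting, and equidimensionality of $\mathcal{F}_X$ follows because a finite union of irreducible $n$-dimensional varieties is equidimensional of dimension $n$.
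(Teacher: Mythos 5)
Your proof is correct and follows essentially the same route as the paper's: decompose $\mathcal{F}_X$ as the union over branches of the radical varieties $\mathcal{V}_\mathcal{P}(X\times\C^c)$, apply Theorem~\ref{thm: components of Ap(X)}(3) with parameter space $\C^{n+c}$ and constraining variety $X\times\C^c$ of codimension $c$ to get each piece irreducible of dimension $n$, then verify both inclusions and take closures. Your explicit attention to the notational shift between Section~\ref{sec: prelim radical} and the optimization setting is a welcome clarification of a point the paper treats implicitly, but the substance of the argument is the same.
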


\begin{proof}
For a branch of \eqref{eq: gradient vs Jacobian}, consider a radical parametrization $\phi$ satisfying the conditions of Definition~\ref{compatible}, i.e., the Zariski closure of the zero set of $\operatorname{lcm}(\alpha_{1N},\ldots,\alpha_{mN},\alpha_{1D},\ldots,\alpha_{mD},\phi_{1D},\ldots,\phi_{2nD})$ does not contain $X \times \C^c$.
Choose a value $(x_0,s_0)\in X \times \C^c$ such that $h(x_0,s_0)$ is well-defined and $(x_0,s_0)$ is not a branch point for any $\delta_i$, i.e., each branch of $\delta_i$ has a different value at $(x_0,s_0)$, and specify the choice of $\delta$ that corresponds to $(x_0,s_0)$. Consider the radical parametrization
\[
\mathcal{P}\coloneqq\left\{\phi,\ \mathbb{T},\ \delta_1(x_0,s_0)=a_1, \dots, \ \delta_m(x_0,s_0)=a_m\right\}\,.
\]

The radical variety $\mathcal{V}_\mathcal{P}(X)$ of $\mathcal{P}$ is irreducible of dimension $n+c-c=n$  by Theorem~\ref{thm: components of Ap(X)}. Since it corresponds to a branch of~(\ref{eq: gradient vs Jacobian}), it is contained in $\mathcal{F}_X$. Conversely, let $(x,u)$ be a point in $\mathcal{F}_X$ such that $x\in X_\mathrm{sm}$ and $u$ is a root of~(\ref{eq: gradient vs Jacobian}). Then there exists a radical parametrization  $\mathcal{P}$ corresponding to a branch of~(\ref{eq: gradient vs Jacobian})  such that $(x,u) \in \mathcal{V}_\mathcal{P}(X)$. Hence the points $(x,u)$ in $\mathcal{F}_X$ such $x\in X_\mathrm{sm}$ and $u$ is a root of~(\ref{eq: gradient vs Jacobian}) are contained in the union of radical varieties over all branches of~(\ref{eq: gradient vs Jacobian}). Since such points are dense in $\mathcal{F}_X$, then also $\mathcal{F}_X$ is contained in the union. Therefore, $\mathcal{F}_X$ is equal to the union of radical varieties over all branches of~(\ref{eq: gradient vs Jacobian}), and it is equidimensional of dimension $n$. 
\end{proof}

It would be interesting to characterize when the variety $\mathcal{F}_X$ is irreducible. The variety $\mathcal{F}_X$ in Example \ref{ex: F_X decomposable threefold} is not irreducible, whereas in Examples \ref{ex: F_X irred ellipse} and \ref{ex: F_X cardioid different f_u} it is.

\begin{example}\label{ex: F_X decomposable threefold}
Let $n=4$ and consider the smooth cubic threefold $X\colon x_1^3+x_2^3+x_3^2x_4-1=0$ in $\C^4$. Let $f_u=\sum_{i=1}^4(u_i-x_i)^3$, namely the cube of $\|u-x\|_3$.
Using the function \verb+bertiniPosDimSolve+ of the \verb+Macaulay2+ package \verb+Bertini+, we computed a numerical irreducible decomposition of $\mathcal{F}_X\subset\C_x^4\times\C_u^4\cong\C^8$ with four components of dimension $4$ and degree $21$.
\end{example}

\begin{example}\label{ex: F_X irred ellipse}
We can see now that the radical parametrization $\mathcal{V}_\mathcal{P}(X)$ given in Example~\ref{ex: ellipse using implicit parametrization} corresponds to the radical parametrization of an irreducible component of $\mathcal{F}_X$ when $X=\mathbb{V}(x_1^2+4x_2^2-1)$ and 
$f_u= (u_1-x_1)^3+(u_2-x_2)^3$. Direct computation in \verb+Bertini+ shows that in this case $\mathcal{F}_X=\mathcal{V}_\mathcal{P}(X)$.
\end{example}

\begin{example}\label{ex: F_X cardioid different f_u}
Let $n=2$, $c=1$ and $X=\mathbb{V}(g)\subset \C^2$ be the cardioid given by $g=(x_1^2+x_2^2+x_1)^2-(x_1^2+x_2^2)$.
We optimize the objective function $f_u=f_{u_1}+f_{u_2}$ on $X$, where $f_{u_i}=-1/3a_i(u_i-x_i)^3-1/2b_i(u_i-x_i)^2-c_i(u_i-x_i)-d_i$. We want to impose the condition $\mathrm{rank}(J(f_u,g))\le 1$, where up to a common factor $2$
\[
J(f_u,g)=\tilde{J}(f_u,g)=
\begin{pmatrix}
a_1(u_1-x_1)^2+b_1(u_1-x_1)+c_1 & a_2(u_2-x_2)^2+b_2(u_2-x_2)+c_2 \\
2x_1^3+2x_1x_2^2+3x_1^2+x_2^2 & x_2(2x_1^2+2x_2^2+2x_1-1)
\end{pmatrix}\,.
\]
In this case, solving by radicals the system \eqref{eq: gradient vs Jacobian} yields the solutions
\begin{align*}
    u_1&=\frac{2 a_1 x_1-b_1\pm\sqrt{b_1^2-4 a_1 c_1+4 a_1 s (2x_1^3+2x_1x_2^2+3x_1^2+x_2^2)}}{2 a_1}\\
    u_2&=\frac{2 a_2 x_2-b_2\pm\sqrt{b_2^2-4 a_2 c_2+4 a_2 s x_2(2x_1^2+2x_2^2+2x_1-1)}}{2 a_2}.
\end{align*}
Define $\delta_1$ and $\delta_2$ via the relations $\delta_1^2=b_1^2-4 a_1 c_1+4 a_1 s (2x_1^3+2x_1x_2^2+3x_1^2+x_2^2)$ and $\delta_2^2=b_2^2-4 a_2 c_2+4 a_2 s x_2(2x_1^2+2x_2^2+2x_1-1)$. We construct the radical tower
\[
\mathbb{T}\coloneqq[k_0\coloneqq\C(x,s) \subset k_1\coloneqq\C(x,s)(\delta_1)\subset k_2\coloneqq\C(x,s)(\delta_1,\delta_2)]
\]
and the parametrization map
\[
\phi\colon \C^3 \rightarrow \C^4\,,\quad\phi(x_1,x_2,s)\coloneqq\left(x_1,x_2,\frac{2 a_1 x_1-b_1+\delta_1}{2 a_1},\frac{2 a_2 x_2-b_2+\delta_2}{2 a_2}\right)\,.
\]
All $4$ choices of the branches of $\delta_i$'s satisfy the Jacobian criterion. The pair $(\mathcal{P},X)$ satisfy the conditions from Definition~\ref{defn: restriction of radical parametrization}. Hence, we can restrict $\mathcal{P}$ to $X$ and the resulting variety $\mathcal{V}_\mathcal{P}(X)$ is irreducible of dimension $2$ and is an irreducible component of $\mathcal{F}_X$. A \verb+Bertini+ computation shows that $\mathcal{F}_X$ is irreducible, so $\mathcal{V}_\mathcal{P}(X)=\mathcal{F}_X$.

The same ideal $\mathcal{F}_X$ might be computed using the approach of Example~\ref{ex: ellipse using radical parametrization}. Indeed, the cardioid $X$ is parametrized by the map $\psi(t)=(\frac{2t^2-2}{(1+t^2)^2},\frac{-4t}{(1+t^2)^2})$. One verifies that the normal line $N_{\psi(t)}X$ is spanned by $\beta=(3t^2-1, t(t^2-3))$. Solving by radicals the system $a_i(u_i-\psi_i(t))^2+b_i(u_i-\psi_i(t))+c_i=s\beta_i$ for all $i\in\{1,2\}$ with respect to $u_i$ yields the map $\phi(t,s)=(\psi_1(t),\psi_2(t),\zeta_1(t,s),\zeta_2(t,s))$, where
\begin{align*}
\zeta_1(t,s) & = \frac{2a_1\psi_1(t)-b_1+\sqrt{b_1^2-4a_1c_1+4a_1s(3t^2-1)}}{2a_1}\\
\zeta_2(t,s) & = \frac{2a_2\psi_2(t)-b_2+\sqrt{b_2^2-4a_2c_2+4a_2st(t^2-3)}}{2a_2}\,.
\end{align*}
We introduce the parameters $\delta_1$ and $\delta_2$ via the relations $\delta_1^2=b_1^2-4a_1c_1+4a_1s(3t^2-1)$ and $\delta_2^2=b_2^2-4a_2c_2+4a_2st(t^2-3)$. Then we construct the radical tower
\[
\mathbb{T}\coloneqq[k_0\coloneqq\C(t,s) \subset k_1\coloneqq\C(t,s)(\delta_1)\subset k_2\coloneqq\C(t,s)(\delta_1,\delta_2)]\,.
\]
After specifying the evaluation of parametrization in the form $\delta_1(t_0,s_0)=q_1$ and $\delta_2(t_0,s_0)=q_2$ for some $t_0,s_0,q_1,q_2\in\C$, the corresponding radical parametrization is
\begin{equation*}
    \mathcal{P}=\left\{\left(\psi_1(t),\psi_2(t),\frac{2a_1\psi_1(t)-b_1+\delta_1}{2a_1}, \frac{2a_2\psi_2(t)-b_2+\delta_2}{2a_2}\right), \mathbb{T},\ \delta_1(t_0,s_0)=q_1,\ \delta_2(t_0,s_0)=q_2\right\}\,.
\end{equation*}
The radical variety $\mathcal{V}_\mathcal{P}$ coincides with $\mathcal{V}_\mathcal{P}$ and, in turn, with $\mathcal{F}_X$.
\end{example}

\begin{corollary}\label{prop: gen critical ideal}
Let $f_u$ and $X\subset \C^n$ satisfy the assumptions of Theorem~\ref{prop: X unirational, f-correspondence parametrizable}. 
Over general data points $u\in \C^n$, the second projection $\pi_2\colon\mathcal{F}_X \to \C^n$ has finite fibers $\pi_2^{-1}(u)$ of constant cardinality. 
\end{corollary}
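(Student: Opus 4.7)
The plan is to deduce the corollary directly from Theorem~\ref{prop: X unirational, f-correspondence parametrizable} via a fiber-dimension argument applied to each irreducible component of $\mathcal{F}_X$. Let $\mathcal{F}_X = \bigcup_{i=1}^N \mathcal{F}_X^{(i)}$ be the decomposition into irreducible components. By Theorem~\ref{prop: X unirational, f-correspondence parametrizable}, each $\mathcal{F}_X^{(i)}$ has dimension exactly $n$, which matches the dimension of the target $\C^n$ of $\pi_2$.

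First, I would separate the components into two classes according to whether $\pi_2|_{\mathcal{F}_X^{(i)}}$ is dominant. For each non-dominant component, the image $\overline{\pi_2(\mathcal{F}_X^{(i)})}$ is a proper subvariety $Z_i \subsetneq \C^n$, and over any $u \in \C^n \setminus \bigcup Z_i$ these components contribute nothing to the fiber $\pi_2^{-1}(u)$. This removes a proper closed subset of $\C^n$ and is harmless for ``general $u$''.

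For each dominant component $\mathcal{F}_X^{(i)}$, the restriction $\pi_2|_{\mathcal{F}_X^{(i)}}\colon \mathcal{F}_X^{(i)} \to \C^n$ is a dominant morphism between irreducible varieties of the same dimension $n$. By the fiber-dimension theorem (for instance~\cite[Corollary 11.13]{harris1992algebraic}), there exists a dense open subset $U_i \subset \C^n$ over which all fibers are nonempty, zero-dimensional, and hence finite. Moreover, by generic finiteness combined with the fact that a dominant generically finite morphism has locally constant fiber cardinality on a dense open subset (equivalently, by taking $U_i$ small enough so that $\pi_2|_{\mathcal{F}_X^{(i)}}$ becomes an étale-like covering of degree $\deg(\pi_2|_{\mathcal{F}_X^{(i)}})$), the cardinality $|\pi_2^{-1}(u) \cap \mathcal{F}_X^{(i)}|$ is a fixed integer $d_i$ for all $u \in U_i$.

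Finally, I would set $U \coloneqq \bigl(\bigcap_i U_i\bigr) \setminus \bigl(\bigcup_{j\text{ non-dominant}} Z_j \cup \bigcup_{i\ne j} (\pi_2(\mathcal{F}_X^{(i)} \cap \mathcal{F}_X^{(j)}))\bigr)$; this last closed correction term, controlling the intersections of components to ensure the union over $i$ of fiber cardinalities adds without overcounting, is the only slightly subtle point, since pairwise intersections of the $\mathcal{F}_X^{(i)}$ have strictly smaller dimension and project into proper closed subsets of $\C^n$. Over $U$, which is a dense open subset of $\C^n$, we conclude
\begin{equation*}
|\pi_2^{-1}(u)| = \sum_{i\text{ dominant}} d_i
\end{equation*}
is a finite constant, independent of $u$. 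The main obstacle, as indicated, is ensuring that the fiber counts from different components are compatible and that the exceptional loci combine into a proper closed set; this is handled because a finite union of proper closed subvarieties of the irreducible space $\C^n$ is still a proper closed subset.
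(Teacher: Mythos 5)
Your proof takes essentially the same approach as the paper's: both apply the Theorem on Dimension of Fibers componentwise, discard non-dominant components (whose images are proper closed subsets), and conclude generic finiteness with constant fiber cardinality on the dominant ones. Your version simply spells out two points the paper leaves implicit — why the cardinality is actually \emph{constant} (via the generically étale locus) and why counts from different components add cleanly (pairwise intersections project into proper closed subsets) — but the underlying argument is the same.
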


\begin{proof}
We will apply the Theorem on the Dimension of Fibers to each of the irreducible components of $\mathcal{F}_X$, which by Theorem~\ref{prop: X unirational, f-correspondence parametrizable} are $n$-dimensional. If the dimension of the second projection of a component is less than $n$, then the fiber over a general $u \in \C^n$ is empty. If the dimension of the second projection of a component is $n$, then by the Theorem on the Dimension of Fibers, the fibers are generically finite sets. Since this is true for each of the components of $\mathcal{F}_X$, then also the second projection $\pi_2\colon\mathcal{F}_X \to \C^n$ has finite fibers $\pi_2^{-1}(u)$ of constant cardinality.
\end{proof}

\begin{definition}
The {\em algebraic degree} of $X\subset\C^n$ with respect to the objective function $f_u$ is the cardinality of the fiber $\pi_2^{-1}(u)$ over a general data point $u\in V$. It is denoted by $\deg_{f_u}(X)$.
\end{definition}

When $f_u = \langle u,x \rangle$, then the algebraic degree of this optimisation problem is determined by the degree of a certain polynomial that vanishes on the dual variety $X^\vee$. We refer the reader to~\cite{rostalski2012chapter} for a detailed discussion.

\begin{remark}
Looking back at Definition \ref{def: optimization correspondence}, if $X$ is contained in the variety $H=\mathbb{V}\left(\prod_{i=1}^n\left(\frac{\partial f_u}{\partial x_i}\right)_{D}\right)$, then the optimization correspondence $\mathcal{F}_X$ is empty. In this case, we say that $\deg_{f_u}(X)=0$. In general, we consider varieties $X$ that are not contained in $H$.
\end{remark}

\section{The \texorpdfstring{$p$}{p}-norm distance degree}\label{sec: p-norm distance degree}

In this section, we focus on $p$-norm optimization on algebraic varieties, where $p$ is a positive integer number. In particular, given a real variety $X^\mR\subset\R^n$, we study the optimization problem
\begin{equation}\label{eq: optimization problem p-norm}
\min_{x\in X^\mR}\|u-x\|_p=\left(\sum_{i=1}^n|u_i-x_i|^p\right)^{\frac{1}{p}}\,.
\end{equation}
When $p$ is even, this is equivalent to the optimization problem \eqref{eq: optimization problem, general form} with $f_u=\sum_{i=1}^n(u_i-x_i)^p$. Again the prototype example is for $p=2$ and corresponds to Euclidean Distance optimization. Things are subtler when $p$ is odd, as for each $i$ the quantity $|u_i-x_i|^p$ is in general different from $(u_i-x_i)^p$. Then the global minimum is to be found among the solutions of all optimization problems of the form \eqref{eq: optimization problem, general form} where $f_u=\pm(u_1-x_1)^p\pm\cdots\pm(u_n-x_n)^p$ for all choices of signs $\pm$, up to a simultaneous change of sign. All in all, there are $2^{n-1}$ optimization problems to be solved. For a concrete example of $p$-norm optimization with $p$ odd see Example \ref{ex: 3-norm optimization ellipse}. 

Taking into account the previous discussion and the warnings about the odd case, for all integer $p\ge 1$ we consider the relaxed optimization problem \eqref{eq: optimization problem, general form} with $f_u=d_u^p\coloneqq\sum_{i=1}^n(u_i-x_i)^p\in\C[u][x]$.
This allows us to apply the more general theory discussed in the previous sections.
We denote by $q_p$ the polynomial $q_p(x)=\sum_{i=1}^n x_i^p$ and we define the $p${\em -isotropic hypersurface} of $\C^n$ as $Q_p\coloneqq\mathbb{V}(q_p)\subset\C^n$. For $p$ even, its unique real point is the origin.

Let $X\subset\C^n$ be an affine variety of codimension $c$, and consider a general data point $u\in\C^n$. We use the shorthand $(u-x)^j$ to indicate the vector $((u_1-x_1)^j,\ldots,(u_n-x_n)^j)$ for any nonnegative integer $j$. In particular $\nabla d_u^p=(u-x)^{p-1}$. Hence the critical points of $d_u^p$ on $X$ are the smooth points $x\in X_\mathrm{sm}$ such that $(u-x)^{p-1}\perp T_xX$. Algebraically speaking, the critical points are zeros of the following zero-dimensional ideal in $\C[x]$:
\begin{equation} \label{eq:p-norm-critical-ideal}
\left(I(X)+\left\langle\mbox{$(c+1)\times(c+1)$ minors of}
\begin{pmatrix}
\mathrm{Jac}_{X}(x)\\
(u-x)^{p-1}
\end{pmatrix}
\right\rangle\right)\colon {I(X_{\mathrm{sing}})}^\infty\,.
\end{equation}

\begin{definition}\label{def: p-norm distance degree}
The $p${\em -norm distance degree} of an algebraic variety $X\subset\C^n$ is  the algebraic degree of $X$ with respect to the polynomial $d_u^p$. It is denoted by the shorthand $\deg_p(X)$.
\end{definition}

\begin{definition}\label{def: p-norm correspondence}
The {\em $p$-norm correspondence} of $X$ is the optimization correspondence $\mathcal{F}_X^{(p)}$ associated to $f_u=d_u^p$. It is a subvariety of $\C^n\times\C^n$ defined by the ideal \eqref{eq:p-norm-critical-ideal} in the larger polynomial ring $\C[x,u]$, which we denote by $I(\mathcal{F}_X^{(p)})$.
\end{definition}

\begin{example}[$p$-norm distance degree of a smooth plane curve]\label{ex: p-norm degree smooth plane curve}
Let $X=\mathbb{V}(g)\subset\C^2$ be a smooth plane curve of degree $d$. Mimicking the argument used in \cite[Example 4.5]{DHOST}, we have that
\begin{equation}\label{eq: ideal p-norm correspondence affine plane curve}
I(\mathcal{F}_X^{(p)}) = \left\langle g(x), \frac{\partial g(x)}{\partial x_1}(u_2-x_2)^{p-1}-\frac{\partial g(x)}{\partial x_2}(u_1-x_1)^{p-1}\right\rangle\,.
\end{equation}
For a general $u\in\C^2$, the previous ideal is zero-dimensional and defines the intersection of $X$ with a curve of degree $d+p-2$, hence $\deg_p(X)=d(d+p-2)$.
\end{example}

For a general affine complete intersection $X\subset\C^n$ we can give an upper bound on the $p$-norm distance degree of $X$ in terms of the degrees of the polynomials that cut out $X$. This is an immediate consequence of \cite[Theorem 2.2]{nie2009algebraic}. For $p=2$, the result is refined in \cite[Proposition 2.6]{DHOST}.

\begin{proposition}\label{prop: upper bound p-degree complete intersection}
Let $X=\mathbb{V}(g_1,\ldots,g_c)\subset\C^n$ be a general complete intersection of codimension $c$, where $\deg(g_i)=d_i$ for all $i\in\{1,\ldots,n\}$. Then
\[
\deg_p(X)\le d_1\cdots d_c\sum_{i_0+\cdots+i_c=n-c}(p-1)^{i_0}(d_1-1)^{i_1}\cdots(d_c-1)^{i_c}\,.
\]
\end{proposition}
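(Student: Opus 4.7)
The plan is to realize the $p$-norm problem as a specialization of the generic polynomial optimization problem treated by Nie and Ranestad in \cite{nie2009algebraic} and to deduce the bound from their exact formula via an upper-semicontinuity argument.

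Concretely, I would first apply \cite[Theorem~2.2]{nie2009algebraic} to the polynomial optimization problem with a generic objective of degree $p$ and generic constraints $g_1,\ldots,g_c$ of degrees $d_1,\ldots,d_c$. Their theorem yields exactly
\[
d_1\cdots d_c\sum_{i_0+\cdots+i_c=n-c}(p-1)^{i_0}(d_1-1)^{i_1}\cdots(d_c-1)^{i_c}
\]
as the cardinality of the set of smooth critical points of the objective on the complete intersection for generic data. The right-hand side of the proposition is therefore the algebraic degree of the ``universal'' problem where the objective is unconstrained of degree $p$.

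Next, I would set up the specialization. As $u$ ranges over $\C^n$, the objective $f_u=d_u^p$ sweeps out an $n$-dimensional affine-linear subfamily of the space $V_p$ of all degree-$p$ polynomials in $x$. I would introduce the universal optimization correspondence
\[
\mathcal{G}\coloneqq\overline{\{(x,f)\in X_{\mathrm{sm}}\times V_p \mid x\text{ is critical for }f\text{ on }X\}};
\]
by the preceding paragraph its projection onto $V_p$ is generically finite of degree equal to the Nie--Ranestad bound. The optimization correspondence $\mathcal{F}_X^{(p)}$ of Definition~\ref{def: p-norm correspondence} is the pullback of $\mathcal{G}$ along the affine-linear embedding $\Phi\colon\C^n_u\hookrightarrow V_p$, $u\mapsto f_u$, so for every $u$ the fiber $\pi_2^{-1}(u)\subset\mathcal{F}_X^{(p)}$ coincides set-theoretically with the fiber of $\mathcal{G}\to V_p$ over $\Phi(u)$.

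For generic $u$, upper semicontinuity of fiber cardinality on a generically finite dominant morphism then bounds $|\pi_2^{-1}(u)|$ by the generic fiber size of $\mathcal{G}\to V_p$, giving the claim. The main obstacle I anticipate is this last semicontinuity step: in principle, specializing the objective to the low-dimensional subfamily $\Phi(\C^n_u)$ could cause critical points to escape to infinity or to collide with multiplicity, and one has to argue that after a suitable projective compactification of $\mathcal{G}$ the total intersection cycle is preserved, so the set-theoretic affine fiber can only lose points, never gain them. For generic constraints $g_1,\ldots,g_c$ this is standard via a B\'ezout-style count on a smooth compactification, but writing it out carefully is the one nontrivial piece of the argument.
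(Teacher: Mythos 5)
Your proposal matches the paper's approach: the paper's entire proof is the one-sentence citation of \cite[Theorem 2.2]{nie2009algebraic}, and the specialization-to-a-subfamily argument you spell out is the implicit content of that citation. You also correctly flag the one subtle step---``upper semicontinuity of fiber cardinality'' is not a theorem as stated for a non-proper generically finite morphism, and what is really being invoked is that the Nie--Ranestad count is an intersection-theoretic (B\'ezout-type) number that majorizes the number of isolated smooth critical points for an \emph{arbitrary}, not just generic, degree-$p$ objective, which is exactly what the compactification argument you sketch supplies.
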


Observe that, for a general affine hypersurface $X\subset\C^n$ of degree $d$, the upper bound of Proposition \ref{prop: upper bound p-degree complete intersection} simplifies to
\begin{equation}\label{eq: upper bound p-degree hypersurface}
\deg_p(X)\le d\frac{(d-1)^n-(p-1)^n}{d-p}\,.
\end{equation}
\begin{example}\label{ex: 3-norm optimization ellipse}
Consider the ellipse $X^\mR\colon x_1^2+4x_2^2-1$ in $\R^2$. Our goal is to solve the problem \eqref{eq: optimization problem p-norm} for $p=3$ and for a given data point $u\in\R^2$. In particular, we need to solve the two optimization problems
\begin{equation}\label{eq: optimization problem p-norm, example}
\min_{x\in X^\mR}d_u^{3,+}\coloneqq[(u_1-x_1)^3+(u_2-x_2)^3] \quad\mbox{and}\quad \min_{x\in X^\mR}d_u^{3,-}\coloneqq[(u_1-x_1)^3-(u_2-x_2)^3]\,.
\end{equation}
If we pick for example the point $u=(-6/10,6/10)$, the first optimization problem produces $6=\deg_p(X)$ complex critical points, which we computed with \verb+M2+. Two of them are the real points $A_1=(-0.228, -0.487)$ and $A_2=(0.998,0.032)$. The second optimization problem gives other two real critical points $A_3=(-0.508, 0.431)$ and $A_4=(0.997, -0.040)$. The first image of Figure \ref{fig: ellipse3norm} shows all real critical points $A_1,\ldots,A_4$. The dashed lines describe the boundaries of the $3$-norm circles $C_i\coloneqq\{x\colon\|x-u\|_3\le\|A_i-u\|_3\}$ for all $i\in\{1,\ldots,4\}$. The four curves $C_i$ are not algebraic, indeed the relaxed optimization problems in \eqref{eq: optimization problem p-norm, example} determine four algebraic curves
\begin{align*}
D_1&\coloneqq\mathbb{V}(d_u^{3,+}(x)-d_u^{3,+}(A_1)),\ D_2\coloneqq\mathbb{V}(d_u^{3,+}(x)-d_u^{3,+}(A_2)),\\
D_3&\coloneqq\mathbb{V}(d_u^{3,-}(x)-d_u^{3,-}(A_3)),\ D_4\coloneqq\mathbb{V}(d_u^{3,-}(x)-d_u^{3,-}(A_4))\,.
\end{align*}
The four curves $D_i$ are tangent to $X^\mR$ at the corresponding critical points $A_i$ and only partially overlap the corresponding $3$-norm balls $B_i$. The global minimum of the $3$-norm is attained at $A_3$ and is equal to $\|u-A_3\|_3\approx 0.178$.

\begin{figure}[ht]
	\centering
	\includegraphics[width=3.3in]{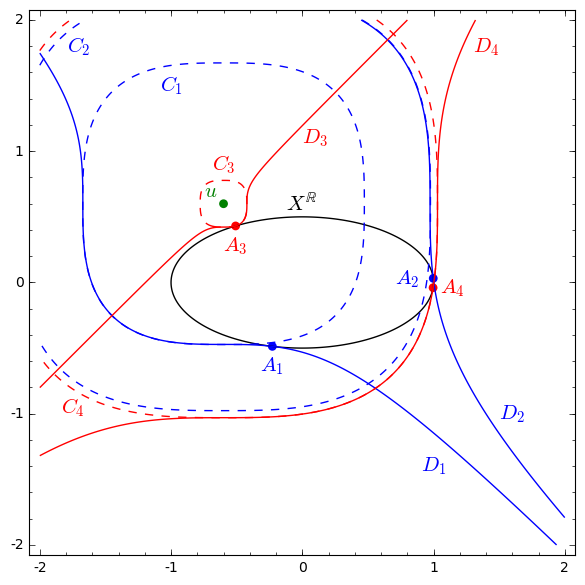}
	\includegraphics[width=3.3in]{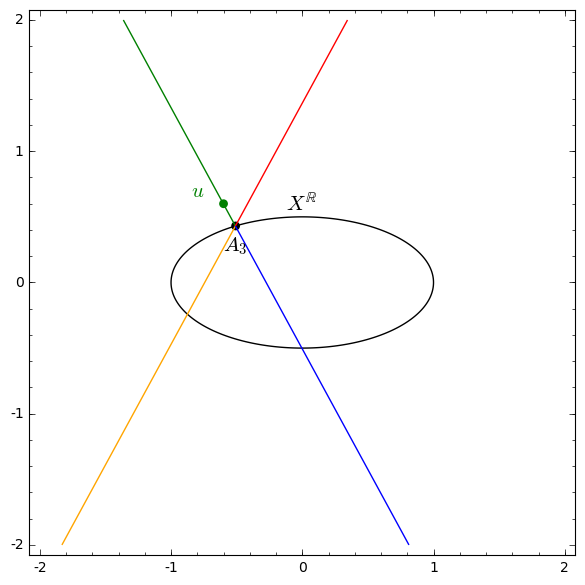}
	\caption{On the left, the critical points $A_i$ on the ellipse $X^\mR$ for the two optimization problems in \eqref{eq: optimization problem p-norm, example}, with the $3$-norm circles $C_i$ and the corresponding curves $D_i$. On the right, the four half-lines corresponding to the branches of the fiber $\pi^{-1}(A_3)$ with respect to the second optimization problem in \eqref{eq: optimization problem p-norm, example}.}\label{fig: ellipse3norm}
\end{figure}
\end{example}

\begin{example}\label{ex: fiber first projection ellipse 3-norm}
In this example we consider again the ellipse $X^\mR\colon x_1^2+4x_2^2-1$ in $\R^2$ and the optimization problems in \eqref{eq: optimization problem p-norm, example}. Here we fix a point $x\in X$ and we look for all data points $u\in\C^2$ such that $x$ is a critical point for either the function $d_u^{3,+}$ or $d_u^{3,-}$. We consider first $d_u^{3,+}$. In other words, we want to compute the fiber $\pi^{-1}(x)$ of the projection $\pi\colon\mathcal{F}_X^{(p)}\to X$. Similarly as in \eqref{eq: ideal p-norm correspondence affine plane curve}, the variety $\mathcal{F}_X^{(p)}$ is defined by the ideal
\[
\langle x_1^2+4x_2^2-1, x_1(u_2-x_2)^2-4x_2(u_1-x_1)^2\rangle\,.
\]
In particular $\pi^{-1}(x)=\mathbb{V}(x_1(u_2-x_2)^2-4x_2(u_1-x_1)^2)$ is a degenerate plane conic with discriminant $\Delta=-4x_1x_2$. For example if $x\in X^\mR$ lives in the interior of the first orthant, then we can write it as $x=(\xi_1^2,\xi_2^2)$ and hence $\pi^{-1}(x)=\mathbb{V}(\xi_1(u_2-\xi_2^2)-2\xi_2(u_1-\xi_1^2))\cup\mathbb{V}(\xi_1(u_2-\xi_2^2)+2\xi_2(u_1-\xi_1^2))$ is a union of two real lines. Or if $x\in X^\mR$ lives in the interior of the second orthant, then we can write it as $x=(-\xi_1^2,\xi_2^2)$ and hence $\pi^{-1}(x)=\mathbb{V}(\xi_1^2(u_2-\xi_2^2)^2+4\xi_2^2(u_1+\xi_1^2)^2)$ is a union of two complex conjugate lines meeting at the real point $x$. A similar argument with changed signs applies to the objective function $d_u^{3,-}$. This is intimately related to Example \ref{ex: 3-norm optimization ellipse}. Indeed, observe that the critical point $A_3$ in Example \ref{ex: 3-norm optimization ellipse} lives in the second orthant. On one hand, this implies that $A_3$ is the only real point in the fiber $\pi^{-1}(A_3)$ with respect to the first optimization problem in \eqref{eq: optimization problem p-norm, example}. On the other hand, the fiber $\pi^{-1}(A_3)$ with respect to the second optimization problem consists of two real lines, one of them containing the data point $u=(-6/10,6/10)$ as showed in the second image of Figure \ref{fig: ellipse3norm}. When moving away from $A_3$, there are 4 possible branches of data points to follow corresponding to the 4 half-lines whose union is $\pi^{-1}(A_3)$, and the choice of a specific branch is exactly determined by the radical parametrization given in Example \ref{ex: ellipse using implicit parametrization}.
\end{example}

\begin{example}
Consider the twisted cubic curve $X^\mR\subset\R^3$ depicted in green in Figure \ref{fig: fiberstwistedcubic}. It is the image of the parametrization $\psi\colon\R\to\R^3$ given by $\psi(t)=(t,t^2,t^3)$ and it is defined implicitly by the ideal $I=\langle x_2^2-x_1x_3,x_1x_2-x_3,x_1^2-x_2\rangle\subset\R[x]$. The $p$-norm correspondence $\mathcal{F}_X^{(p)}$ is defined by the ideal
\begin{equation}\label{eq: ideal p-correspondence twisted cubic}
I+\left\langle\mbox{$3\times 3$ minors of}
\begin{pmatrix}
(u_1-x_1)^{p-1} & (u_2-x_2)^{p-1} & (u_3-x_3)^{p-1}\\
-x_3 & 2x_2 & -x_1\\
x_2 & x_1 & -1\\
2x_1 & -1 & 0
\end{pmatrix}\right\rangle\,.
\end{equation}
Given a point $x\in X$, the fiber $\pi^{-1}(x)$ of the projection $\pi\colon\mathcal{F}_X^{(p)}\to X$ is a hypersurface in the complex space $\C^3$ of degree $p-1$. In Figure \ref{fig: fiberstwistedcubic} we show the fiber $\pi^{-1}(1,1,1)$ for $p\in\{2,4,6\}$. When $p=2$, the fiber coincides with the normal plane of $X$ at $(1,1,1)$. For $p$ odd, the unique real point of $\pi^{-1}(1,1,1)$ is $(1,1,1)$ itself. As $p\to\infty$, the fiber $\pi^{-1}(1,1,1)$ tends to a semialgebraic set defined by affine linear inequalities.
\begin{figure}[ht]
  \centering
  \begin{overpic}[width=3.3in, tics=10]{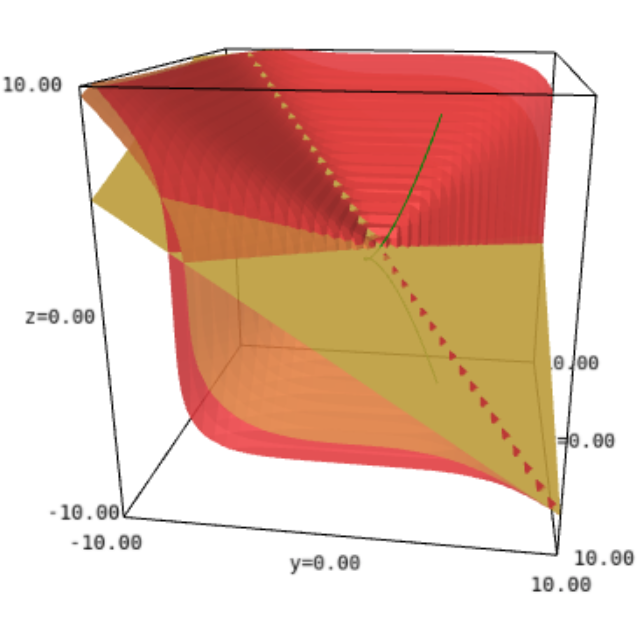}
  \put (50,50) {\small{$p=2$}}
  \put (37,40) {\small{$p=4$}}
  \put (30,30) {\small{$p=6$}}
  \put (70,75) {$X^{\mathbb{R}}$}
  \end{overpic}
  \caption{Fibers $\pi^{-1}(1,1,1)$ of $\mathcal{F}_X^{(p)}\to X$ to the twisted cubic $X$ for $p\in\{2,4,6\}$.}\label{fig: fiberstwistedcubic}
\end{figure}

\end{example}

\begin{remark}
One might naturally define the analogues of the $\varepsilon$-offset hypersurface \cite{HW} and the ED polynomial \cite{OS} in the setting of $p$-norm distance optimization over a real algebraic variety. We leave it for further research.
As an encouraging remark, we stress that these objects are important when dealing with real critical points on a real variety $X^\mR\subset\R^n$ with respect to the $p$-norm distance function. Similarly to what happens in the ED case, the branch locus of the projection $\mathcal{F}_X^{(p)}\to\C_u^n$ is typically a hypersurface in $\R^n$, which we might call {\em $p$-norm distance discriminant}. When $n=2$, the $p$-norm distance discriminant generalizes the classical {\em evolute} of real plane curves.
\end{remark}

\begin{example}
Consider the affine conic $X^\mR\colon x_1^2+4x_2^2-1=0$ in $\R^2$. A \verb|Macaulay2| computation reveals that, for $p\in\{2,3,4\}$, the ``$p$-norm evolute'' is one of the red curves displayed in Figure \ref{fig: generalized evolutes ellipse}. The classical evolute of $X$ appears on the left and has degree $6$. It divides the real plane in two regions, an inner one and an outer one where there are $4$ and $2$ real critical points, respectively. In the middle we have the $3$-norm evolute of degree $12$. We stress that this curve corresponds to the first relaxed optimization problem in \eqref{eq: optimization problem p-norm, example}. The evolute corresponding to the second optimization problem in \eqref{eq: optimization problem p-norm, example} is symmetric to the first one with respect to the $y$ axis. Finally, the $4$-norm evolute on the right has degree $18$. This suggests that the $p$-norm evolute of $X^\mR$ has degree $6(p-1)$.

Geometrically, the $p$-norm evolute of $X^\mR$ corresponds to the envelope of the family $\{(\pi^{-1}(x))^\mR\mid x\in X^\mR\}$ of real fibers of the projection $\pi\colon\mathcal{F}_X^{(p)}\to X$. Each fiber $\pi^{-1}(x)$ is defined by the second generator in the ideal \eqref{eq: ideal p-norm correspondence affine plane curve}, where in this case $g=x_1^2+4x_2^2-1$. In particular for $p=2$ we are considering the family of normal lines at the points of $X^\mR$. For $p$ even, each fiber $(\pi^{-1}(x))^\mR$ consists of one real line, whereas for $p$ odd either $(\pi^{-1}(x))^\mR=\{x\}$ or it consists of two real lines, possibly coincident. A sample of the family $\{(\pi^{-1}(x))^\mR\mid x\in X^\mR\}$ is depicted in Figure \ref{fig: generalized evolutes ellipse} for small values of $p$.
\begin{figure}[ht]
    \begin{overpic}[width=2.2in, tics=10]{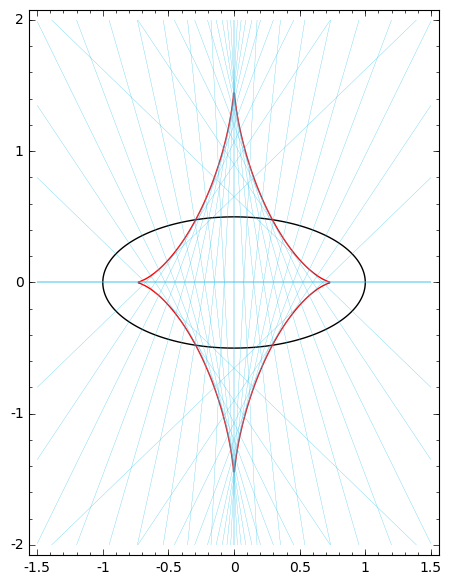}
    \put (38.5,50) {\red{$4$}}
    \put (57,80) {\red{$2$}}
    \end{overpic}
    \begin{overpic}[width=2.2in, tics=10]{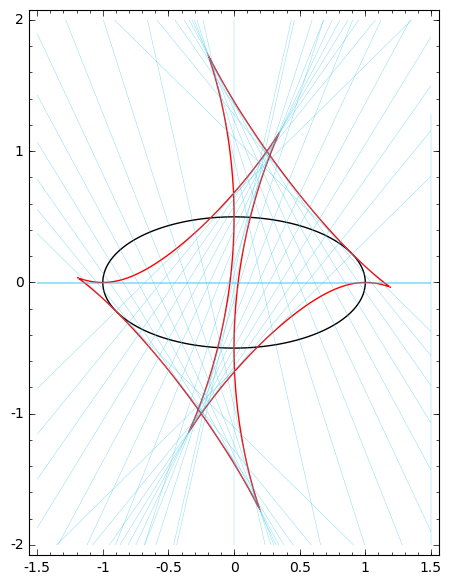}
    \put (41,72) {{\small\red{$4$}}}
    \put (37,29) {{\small\red{$4$}}}
    \put (47,55) {{\small\red{$4$}}}
    \put (30,47) {{\small\red{$4$}}}
    \put (40.5,64) {{\footnotesize\red{$6$}}}
    \put (37,36.5) {{\footnotesize\red{$6$}}}
    \put (57,80) {{\small\red{$2$}}}
    \put (47,73) {{\tiny\red{$\leftarrow\!2$}}}
    \put (27,29) {{\tiny\red{$2\!\rightarrow$}}}
    \end{overpic}
    \begin{overpic}[width=2.2in, tics=10]{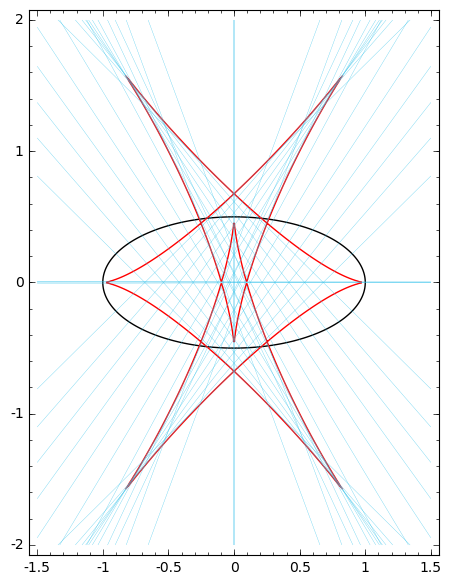}
    \put (39,63.5) {{\tiny\red{$6$}}}
    \put (39,37.5) {{\tiny\red{$6$}}}
    \put (39,50.6) {{\tiny\red{$8$}}}
    \put (44.5,68) {{\tiny\red{$4$}}}
    \put (33,68) {{\tiny\red{$4$}}}
    \put (33,35) {{\tiny\red{$4$}}}
    \put (44.5,35) {{\tiny\red{$4$}}}
    \put (46.5,50.6) {{\tiny\red{$4$}}}
    \put (30.5,50.6) {{\tiny\red{$4$}}}
    \put (61.5,75) {{\red{$2$}}}
    \end{overpic}
    \caption{The ``$p$-norm evolutes'' of the ellipse $X^\mR\colon x_1^2+4x_2^2-1=0$ for $p\in\{2,3,4\}$. They divide the real plane in several regions, in each of them the number of real critical points of the $p$-norm distance function from $X^\mR$ is constant.}\label{fig: generalized evolutes ellipse}
\end{figure} 
\end{example}

For the rest of the section, we will restrict to affine cones $X\subset\C^n$ through the origin, and we will use the same notation $X$ for the corresponding projective variety in $\PP^{n-1}$. By definition, the $p$-norm distance degree of $X\subset\PP^{n-1}$ is the $p$-norm distance degree of the corresponding affine cone in $\C^n$. On one hand, the ideal $I(X)\subset\C[x]$ is homogeneous. On the other hand, the ideal $I(\mathcal{F}_X^{(p)})$ is not homogeneous. Our next goal is to replace $I(\mathcal{F}_X^{(p)})$ by a homogeneous ideal. To do this, we introduce extra variables $y=(y_1,\dots,y_n)$. 

Define
\begin{equation}
J(X) \coloneqq\left\langle\mbox{$(c+1)\times(c+1)$ minors of}
\begin{pmatrix}
y^{p-1}\\
\mathrm{Jac}_{X}(x)\\
\end{pmatrix}
\right\rangle\,,
\end{equation}
where $y^{p-1}$ denotes the vector $(y_1^{p-1},\ldots,y_n^{p-1})$.

Using the fact that $X$ and the $p$-isotropic hypersurface $Q_p$ are projective varieties, we can replace $I(\mathcal{F}_X^{(p)})$ with the following homogeneous ideal in $\C[x]$:
\begin{equation}\label{eq: final proj linearized critical ideal}
\left[\left(I(X)+J(X)+\left\langle\mbox{$3\times 3$ minors of}
\begin{pmatrix}
y\\
u\\
x\\
\end{pmatrix}
\right\rangle\right)\colon {\left[I(X_{\mathrm{sing}})\cdot I(Q_p)\cdot \langle y\rangle \right]}^\infty\, \right]\cap \C[x].
\end{equation}

The following lemma coincides with \cite[Lemma 2.8]{DHOST} when $p=2$.

\begin{lemma}\label{lemma: affine vs projective}
Consider an affine cone $X \subset\C^n$ of codimension $c$ and pick a data point $u \in \C^n\setminus X$. Let $x\in X\setminus\{0\}$ be such that the corresponding point $[x]$ in $\PP^{n-1}$ does not lie in $Q_p$. Then $[x]$ lies in the projective variety of~\eqref{eq: final proj linearized critical ideal} if and only if some scalar multiple $\lambda x$ of $x$ lies in the variety of~\eqref{eq:p-norm-critical-ideal}. Then $\lambda$ is unique. 
\end{lemma}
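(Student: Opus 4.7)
The plan is to prove both directions using the identification $y = u - \lambda x$ that matches the auxiliary variable $y$ of~\eqref{eq: final proj linearized critical ideal} with the scalar $\lambda$ parametrizing the ray $\C \cdot x$. The key technical facts are that, since $X$ is an affine cone cut out by homogeneous polynomials $g_1,\dots,g_s$ of degrees $d_1,\dots,d_s$, one has the Euler relation $\mathrm{Jac}_X(x)\cdot x = 0$ (so the row span of $\mathrm{Jac}_X(x)$ is contained in $x^\perp$) and the scaling identity $\mathrm{Jac}_X(\lambda x) = \mathrm{diag}(\lambda^{d_i-1}) \cdot \mathrm{Jac}_X(x)$ for $\lambda \ne 0$. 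Consequently the row spans of $\mathrm{Jac}_X(\lambda x)$ and $\mathrm{Jac}_X(x)$ coincide, and both $X_{\mathrm{sing}}$ and $Q_p$ are invariant under nonzero rescaling.

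For the $(\Leftarrow)$ direction, assume $\lambda x$ lies in the variety of~\eqref{eq:p-norm-critical-ideal}; in particular $\lambda \ne 0$ and $\lambda x \in X_{\mathrm{sm}}$. Setting $y \coloneqq u - \lambda x$, one has $y \ne 0$ (since $u \notin X$ while $\lambda x \in X$), $y^{p-1}$ in the row span of $\mathrm{Jac}_X(\lambda x) = \mathrm{Jac}_X(x)$ by the cone identity, and $y \in \mathrm{span}(u,x)$ which forces all $3 \times 3$ minors of the matrix with rows $y, u, x$ to vanish. Thus $(x,y)$ satisfies all generators of the ideal before saturation and lies outside the saturation locus, so projecting to the $x$-coordinates places $[x]$ in the projective variety of~\eqref{eq: final proj linearized critical ideal}.

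Conversely, assume $[x]$ is in the projective variety of~\eqref{eq: final proj linearized critical ideal}. By the saturation there exists $y \ne 0$ with $x \in X_{\mathrm{sm}}$, $y^{p-1}$ in the row span of $\mathrm{Jac}_X(x)$, and the matrix with rows $y, u, x$ of rank at most $2$. Since $u \notin X$ but $x \in X \setminus \{0\}$ with $X$ a cone, $u$ and $x$ are linearly independent, so uniquely $y = \alpha u + \beta x$. If $\alpha = 0$, then $y^{p-1} = \beta^{p-1} x^{p-1}$ is in the row span of $\mathrm{Jac}_X(x) \subset x^\perp$, hence $\beta^{p-1} q_p(x) = 0$, contradicting $\beta \ne 0$ and $[x] \notin Q_p$. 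Thus $\alpha \ne 0$, and since the defining conditions are homogeneous in $y$, rescaling yields $\alpha = 1$, so $y = u - \lambda x$ with $\lambda \coloneqq -\beta$. The case $\lambda = 0$ would place $\lambda x = 0 \in X_{\mathrm{sing}}$, excluded by the cone-level reading of the saturation by $I(X_{\mathrm{sing}})$. Hence $\lambda \ne 0$, and cone homogeneity places $(u - \lambda x)^{p-1}$ in the row span of $\mathrm{Jac}_X(\lambda x)$, so $\lambda x$ lies in the variety of~\eqref{eq:p-norm-critical-ideal}.

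For uniqueness of $\lambda$, two distinct values $\lambda_1 \ne \lambda_2$ would give $y_i = u - \lambda_i x$ that are not proportional (as $u \notin \C \cdot x$). For $p = 2$ the constraint $u - \lambda x \in x^\perp$ is the linear equation $\langle u, x \rangle = \lambda q_2(x)$ with unique solution given $q_2(x) \ne 0$, as in the proof of~\cite[Lemma~2.8]{DHOST}. For $p \ge 3$ the analogous scalar equation $P(\lambda) \coloneqq \sum_j (u_j - \lambda x_j)^{p-1} x_j = 0$ has degree $p - 1$, and I expect this step to be the main obstacle; the plan is to combine $P(\lambda) = 0$ with the remaining rank conditions forcing $(u - \lambda x)^{p-1}$ to lie in the specific $c$-dimensional row span of $\mathrm{Jac}_X(x)$, and to argue that this leaves a unique admissible $\lambda$.
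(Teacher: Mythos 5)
Your proof follows essentially the same route as the paper's: you use the saturation to restrict to smooth points, prove the forward direction by setting $y \coloneqq u-\lambda x$ and invoking cone homogeneity of the Jacobian, and in the reverse direction you rule out $y$ proportional to $x$ via the Euler relation combined with $[x]\notin Q_p$, and then exploit the rank-two condition on the matrix with rows $y,u,x$.

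Where you diverge is the uniqueness of $\lambda$, which you explicitly left open for $p\ge 3$ by trying to show that the scalar equation $P(\lambda)=x\cdot(u-\lambda x)^{p-1}=0$ of degree $p-1$ has a single admissible root. This is a weaker condition than membership in the $c$-dimensional row span, and analyzing $P$ directly is a dead end. The paper avoids the polynomial altogether: once one knows that $y$ and $x$ are linearly independent, the $3\times 3$ minor condition forces $u\in\mathrm{span}(x,y)$, so $u=\lambda x+\mu y$ has a unique coefficient pair, and $\lambda$ is simply read off from that decomposition. You actually had all the ingredients for this already, since in your $(\Rightarrow)$ step you wrote $y=\alpha u+\beta x$ and normalized $\alpha=1$; the paper's $\lambda$ is your $-\beta$, so you should just read off the coefficient instead of chasing roots of $P$. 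Be aware, though, that both your approach and the paper's establish uniqueness of $\lambda$ only relative to a fixed auxiliary $[y]$; a fully self-contained uniqueness argument would also need to show that at most one admissible $[y]$ (up to scalar) lies over a given $([x],u)$, a point the paper's proof passes over silently and which your closing paragraph is, correctly, uneasy about.
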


\begin{proof}
Both ideals in \eqref{eq:p-norm-critical-ideal} and \eqref{eq: final proj linearized critical ideal} are saturated with respect to $I(X_{\mathrm{sing}})$, so it is sufficient to prove the statement for smooth points $x \in X$ where the Jacobian $\mathrm{Jac}_{X}(x)$ has rank $c$.  
For the only-if direction, assume that $(u-\lambda x)^{p-1}$ lies in the row span of $\mathrm{Jac}_{X}(x)$. Define $y = u-\lambda x$. Then $y^{p-1}$ lies in the row span of $\mathrm{Jac}_{X}(x)$ and the dimension of the linear span of $u,x, y$ is at most two. Moreover $y \neq 0$, since $u \in V \setminus X$ and $X$ is an affine cone.  This proves the only-if direction. 

It is enough to show the if-direction for all points in the projection of the variety of~\eqref{eq: final proj linearized critical ideal} before elimination, since the Zariski closure of this projection is the variety of~\eqref{eq: final proj linearized critical ideal}.  Assume that $[x]$ lies in the projection of the variety of~\eqref{eq: final proj linearized critical ideal} before elimination, i.e., there exists $y$ such that $([x],[y])$ lies in the variety of~\eqref{eq: final proj linearized critical ideal} before elimination.  Let $x$ lie in the row span of $y$. Then $x^{p-1}=\sum_{i=1}^c\lambda_i(\mathrm{Jac}_{X}(x))_i$ for some $\lambda \in \C$. Since each $g_i$ is homogeneous, it satisfies the Euler's homogeneous function theorem, i.e.,
\[
x \cdot \nabla g_i(x)=d_i g_i(x)\,,
\]
where $d_i$ is the degree of $g_i$. 
Since $x$ is a point on the variety, $g_i(x)=0$ for all $i$, so $x \cdot \nabla g_i(x)=0$. Therefore,
\[
x \cdot x^{p-1}=x \cdot \left(\sum_{i=1}^c\lambda_i(\mathrm{Jac}_{X}(x))_i\right)=0\,,
\]
so $[x] \in Q_p$. This contradicts the hypothesis, so the matrix
$\begin{pmatrix}
y\\
x
\end{pmatrix}$
has rank $2$.  Hence $(u-\lambda x)$ lies in the row span of $y$ and $(u-\lambda x)^{p-1}$ lies in the row span of $\mathrm{Jac}_{X}(x)$ for a unique $\lambda$.
\end{proof}

Consider the closure of the image of $\mathcal{F}_X^{(p)}\cap ((\C_x^n\setminus\{0\})\times\C_u^n)$
under the map $(\C_x^n\setminus\{0\})\times\C_u^n\to\PP_x^{n-1}\times\C_u^n$ defined by $(x, u)\mapsto([x], u)$. This closure is called the {\em projective $p$-norm correspondence} of $X$, and it is denoted by $\mathcal{PF}_X^{(p)}$. It has the following properties.

\begin{theorem}
Let $X$ be an affine cone of codimension $c$ not contained in $Q_p$. The projective $p$-norm correspondence $\mathcal{PF}_X^{(p)}$ is a variety of dimension $n$ inside $\PP_x^{n-1}\times\C_u^n$. If $\mathcal{F}_X^{(p)}$ is irreducible, then  $\mathcal{PF}_X^{(p)}$ is irreducible. It is the zero set of the ideal~\eqref{eq: final proj linearized critical ideal} in $\C[x,u]$.  The general fiber of the projection $\mathcal{PF}_X^{(p)}\to\C_u^n$ is finite of constant cardinality equal to $\deg_p(X)$.
\end{theorem}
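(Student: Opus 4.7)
The plan is to bootstrap the four assertions from the affine results of Section~\ref{sec: def optimization degree}---namely Theorem~\ref{prop: X unirational, f-correspondence parametrizable} and Corollary~\ref{prop: gen critical ideal}---combined with the pointwise compatibility of Lemma~\ref{lemma: affine vs projective}, via the quotient map $q\colon(\C_x^n\setminus\{0\})\times\C_u^n\to\PP_x^{n-1}\times\C_u^n$ sending $(x,u)\mapsto([x],u)$. By construction $\mathcal{PF}_X^{(p)}=\overline{q(\mathcal{F}_X^{(p)}\cap((\C_x^n\setminus\{0\})\times\C_u^n))}$, so every claim reduces to analyzing $q$ on a suitable dense open subset of $\mathcal{F}_X^{(p)}$.

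First I would introduce the open locus $\mathcal{F}_X^\circ\subset\mathcal{F}_X^{(p)}$ where $x\in X_{\mathrm{sm}}\setminus\{0\}$, $u\notin X$, and $[x]\notin Q_p$. Because $X\not\subset Q_p$ by hypothesis and $\mathcal{F}_X^{(p)}$ is equidimensional of dimension $n$ by Theorem~\ref{prop: X unirational, f-correspondence parametrizable}, this locus is open and dense in $\mathcal{F}_X^{(p)}$. The uniqueness of the scalar $\lambda$ in Lemma~\ref{lemma: affine vs projective} makes $q|_{\mathcal{F}_X^\circ}$ injective: if $(x_1,u),(x_2,u)\in\mathcal{F}_X^\circ$ satisfy $[x_1]=[x_2]$, then both are scalar multiples of a common representative, and the lemma forces $x_1=x_2$. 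Together with density, this makes $q|_{\mathcal{F}_X^\circ}$ a birational morphism onto $\mathcal{PF}_X^{(p)}$; consequently $\dim\mathcal{PF}_X^{(p)}=n$ and irreducibility transfers from $\mathcal{F}_X^{(p)}$.

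The main obstacle is the ideal description, and I would adapt the argument of Lemma~\ref{lemma: affine vs projective}. Let $I\subset\C[x,y,u]$ denote the saturated ideal of~\eqref{eq: final proj linearized critical ideal} before elimination of $y$, and set $I_{\mathrm{elim}}=I\cap\C[x,u]$. For the inclusion $\mathbb{V}(I_{\mathrm{elim}})\subset\mathcal{PF}_X^{(p)}$, pick $(x,y,u)\in\mathbb{V}(I)$ on the open locus enforced by the three saturations, so $x\in X_{\mathrm{sm}}$, $y\neq 0$, and $y\notin Q_p$. The degenerate case $y\parallel x$ is ruled out as follows: the condition $y^{p-1}\in\mathrm{rowspan}\,\mathrm{Jac}_X(x)$ together with Euler's relation $x\cdot\nabla g_i(x)=d_ig_i(x)=0$ gives $x\cdot y^{p-1}=0$; if $y=\mu x$ this becomes $\mu^{p-1}q_p(x)=0$, contradicting $y\notin Q_p$. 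Hence $x$ and $y$ are linearly independent, and the rank-$2$ constraint on the matrix with rows $y,u,x$ produces unique $\alpha,\beta\in\C$ with $u=\alpha x+\beta y$. Setting $\lambda=\alpha$ yields $(u-\lambda x)^{p-1}=\beta^{p-1}y^{p-1}\in\mathrm{rowspan}\,\mathrm{Jac}_X(\lambda x)$, which equals $\mathrm{rowspan}\,\mathrm{Jac}_X(x)$ by homogeneity of the $g_i$; thus $(\lambda x,u)\in\mathcal{F}_X^{(p)}$ and $([x],u)\in\mathcal{PF}_X^{(p)}$. The reverse inclusion is routine: given $(x_0,u)\in\mathcal{F}_X^{(p)}$ on $\mathcal{F}_X^\circ$, the witness $y=u-x_0$ satisfies every defining equation of $I$. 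Passing to Zariski closures yields $\mathbb{V}(I_{\mathrm{elim}})=\mathcal{PF}_X^{(p)}$.

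Finally, Corollary~\ref{prop: gen critical ideal} provides constant finite fibers of cardinality $\deg_p(X)$ for $\mathcal{F}_X^{(p)}\to\C_u^n$ over a general $u$. For such $u$, each of the $\deg_p(X)$ critical points lies in $\mathcal{F}_X^\circ$ and maps injectively to $\PP_x^{n-1}\times\{u\}$ by the argument above, so the fiber of $\mathcal{PF}_X^{(p)}\to\C_u^n$ over general $u$ also has cardinality $\deg_p(X)$. The only delicate step here, once again traced back to Lemma~\ref{lemma: affine vs projective}, is the uniqueness of $\lambda$, which prevents distinct affine critical points from collapsing projectively.
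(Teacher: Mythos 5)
Your argument for the ideal description has a gap at the step showing $\mathbb{V}(I_{\mathrm{elim}}) \subset \mathcal{PF}_X^{(p)}$. After deriving unique scalars $\alpha,\beta$ with $u=\alpha x+\beta y$ and setting $\lambda=\alpha$, you conclude ``$(\lambda x,u)\in\mathcal{F}_X^{(p)}$ and $([x],u)\in\mathcal{PF}_X^{(p)}$.'' But the saturations do not rule out $\lambda=\alpha=0$: this happens exactly when $u$ is proportional to $y$, equivalently when $u^{p-1}\perp T_xX$. In that case $\lambda x$ is the cone vertex, a singular point of $X$ (indeed $\mathrm{Jac}_X(0)$ vanishes identically for a cone of degrees $\ge 2$), so the row-span argument breaks down; and more fundamentally $[\lambda x]$ is undefined, so even if $(0,u)$ were a limit point of $\mathcal{F}_X^{(p)}$, it would not produce a point of $\mathcal{PF}_X^{(p)}$, which by definition only projectivizes the locus with $x\neq 0$ before closure. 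The paper fills exactly this gap with a degeneration argument: if $\lambda=0$, so $u^{p-1}\perp T_xX$, set $x_\varepsilon\coloneqq\varepsilon x$ and $u_\varepsilon\coloneqq\varepsilon x+u$; since $X$ is a cone, $T_{x_\varepsilon}X=T_xX$ and $(u_\varepsilon-x_\varepsilon)^{p-1}=u^{p-1}\perp T_{x_\varepsilon}X$, so $(x_\varepsilon,u_\varepsilon)\in\mathcal{F}_X^{(p)}$ for all $\varepsilon\neq 0$, and $([x_\varepsilon],u_\varepsilon)\to([x],u)$, proving $([x],u)\in\mathcal{PF}_X^{(p)}$. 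You need this step (or an equivalent density argument) to close the inclusion. Apart from this missing case --- and a small misstatement, as the saturation by $I(Q_p)$ enforces $x\notin Q_p$ rather than $y\notin Q_p$, though your actual contradiction correctly derives $q_p(x)=0$ --- your approach coincides with the paper's, reducing the statement to Lemma~\ref{lemma: affine vs projective}, Theorem~\ref{prop: X unirational, f-correspondence parametrizable} and Corollary~\ref{prop: gen critical ideal}.
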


\begin{proof}
We will use Lemma~\ref{lemma: affine vs projective} to show that $\mathcal{F}_X^{(p)}$ is the variety of the ideal~\eqref{eq: final proj linearized critical ideal}. Let $x\in X\setminus(X_{\mathrm{sing}}\cup Q_p)$ and $u\in \C^n$. If $(x,u)$ lies in $\mathcal{F}_X^{(p)}$ then $([x],u)$ it is in the zero set of the ideal~\eqref{eq: final proj linearized critical ideal} and, vice versa, if $([x],u)$ is in the variety of the ideal~\eqref{eq: final proj linearized critical ideal}, then there exists $\lambda$ such that $(\lambda x,u)$ lies in $\mathcal{F}_X^{(p)}$. If $\lambda\neq0$, then $([x],u)$ lies in the projection of $\mathcal{F}_X^{(p)}$, which is contained in $\mathcal{PF}_X^{(p)}$. Otherwise $\lambda=0$. In this case we have that $u^{p-1} \perp T_xX$. For all $\varepsilon\neq 0$ define $x_\varepsilon\coloneqq\varepsilon x$ and $u_\varepsilon\coloneqq\varepsilon x + u$. Then $x_\varepsilon\in X_\mathrm{sm}$ and $(u_\varepsilon-x_\varepsilon)^{p-1}\perp T_{x_\varepsilon}X$ for all $\varepsilon\neq 0$. Here we are taking advantage of $X$ being a cone. Moreover, we have that $\lim_{\varepsilon\to0}([x_\varepsilon],u_\varepsilon)=([x],u)$, so $([x],u)$ lies in the closure of the projection of $\mathcal{F}_X^{(p)}$, which is $\mathcal{PF}_X^{(p)}$. The remaining statements are proved as in Corollary~\ref{prop: gen critical ideal}.\qedhere
\end{proof}

\section{\texorpdfstring{$p$}{p}-norm distance degrees in terms of polar classes}\label{sec: p-norm polar classes}

In this section we introduce the notions of $s$-conormal and $s$-dual varieties that generalize the classical conormal and dual varieties. The main result of this section is a formula for the $p$-norm distance degree of a projective variety $X$ transversal to the $p$-isotropic hypersurface $Q_p$, which appears as a weighted sum of the {\em polar classes} of classical algebraic geometry. There are several ways to introduce polar classes. We introduce them using the {\em conormal variety} of $X$ defined as
\begin{equation}\label{eq: conormal variety}
\mathcal{N}_X\coloneqq\overline{\{(x,y)\in \C^n\times\C^n\mid\mbox{$x\in X_{\mathrm{sm}}$ and $y\perp T_xX$}\}}\,.
\end{equation}
The conormal variety is clearly an affine cone in $\C^n\times\C^n$, where in the second factor we are identifying $\C^n$ and $(\C^n)^*$. We keep the same notation $\mathcal{N}_X$ for the corresponding variety in $\PP^{n-1}\times\PP^{n-1}$. The {\em polar classes} of $X$ are the coefficients $\delta_i(X)$ of the class in cohomology
\[
[\mathcal{N}_X]=\delta_0(X)t_x^{n-1}t_y+\delta_1(X)t_x^{n-2}t_y^2+\cdots+\delta_{n-2}(X)t_xt_y^{n-1}\in A^*(\PP^{n-1}\times\PP^{n-1})\cong\frac{\Z[t_x,t_y]}{(t_x^n,t_y^n)}\,,
\]
where $t_x=\pi_1^*([H])$, $t_y=\pi_2^*([H'])$, the maps $\pi_1,\pi_2$ are the projections onto the factors of $\PP^{n-1}\times\PP^{n-1}$ and $H, H'$ are hyperplanes in $\PP^{n-1}$. To be consistent with the standard notation, in the rest of the paper  $\delta_i(X)$ refers only to the polar classes of $X$ and in unrelated to the field extensions introduced in Section~\ref{sec: prelim radical}.

The study of critical points for the $p$-norm objective function leads to a natural generalized conormal variety.

\begin{definition}\label{def: s-conormal variety}
Let $s\ge 1$ be an integer. The {\em $s$-conormal variety} of $X$ is
\[
\mathcal{N}_X^{(s)}\coloneqq\overline{\{(x,y)\in \C^n\times\C^n\mid\mbox{$x\in X_{\mathrm{sm}}$ and $y^s\perp T_xX$}\}}\,.
\]
Moreover, the projection of $\mathcal{N}_X^{(s)}$ onto the second factor $\C^n$ is called the {\em $s$-dual variety} of $X$ and it is denoted by $Y^{(s)}$.
When $s=1$ we recover the classical conormal variety $\mathcal{N}_X$ introduced in \eqref{eq: conormal variety} and the classical dual variety $Y^{(1)}=X^\vee$ of $X$, respectively.
\end{definition}

\noindent For all $s\ge 1$, the variety $\mathcal{N}_X^{(s)}$ is the zero set of the following ideal in $\C[x,y]$:
\[
N_X^{(s)} = \left[I(X)+\left\langle\mbox{$(c+1)\times(c+1)$ minors of } \begin{pmatrix}
y^s\\ \mathrm{Jac}_X(x)
\end{pmatrix}\right\rangle\right]\colon I(X_{\mathrm{sing}})^\infty\,.
\]

\noindent It would be interesting to study which properties of the classical conormal and dual varieties extend to their $s$-generalizations.

\noindent Define the {\em joint $p$-norm correspondence} of $X$ as
\[
\mathcal{F}_{X,Y}^{(p)}\coloneqq\overline{\{(x,u-x,u)\in \C_x^n\times \C_y^n\times \C_u^n\mid\mbox{$x\in X_{\mathrm{sm}}$ and $(u-x)^{p-1}\perp T_xX$}\}}\,.
\]

The projection of $\mathcal{F}_{X,Y}^{(p)}$ into $\C_x^n\times \C_y^n$ is the $(p-1)$-conormal variety $\mathcal{N}_X^{(p-1)}$.
The joint $p$-norm correspondence $\mathcal{F}_{X,Y}^{(p)}$ is equidimensional of dimension $n$, since $\mathcal{F}_X^{(p)}$ has these properties (by Corollary~\ref{prop: gen critical ideal}), and the projection $\mathcal{F}_{X,Y}^{(p)}\to\mathcal{F}_X^{(p)}$ is birational with inverse $(x, u)\mapsto(x,u-x,u)$. If $\mathcal{F}_X^{(p)}$ is irreducible, then $\mathcal{F}_{X,Y}^{(p)}$ is irreducible.

\begin{proposition} \label{prop:p-conormal-irreducible}
If $\mathcal{F}_{X}^{(p)}$ is irreducible, then $\mathcal{N}_X^{(p-1)}$ is irreducible.
The converse is true only if $\mathcal{N}_X^{(p-1)}$ does not intersect the diagonal $\Delta\subset\PP_x^{n-1}\times\PP_y^{n-1}$.
\end{proposition}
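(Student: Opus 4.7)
The plan is to use the joint $p$-norm correspondence $\mathcal{F}_{X,Y}^{(p)}\subset\C_x^n\times\C_y^n\times\C_u^n$ as the bridge between $\mathcal{F}_X^{(p)}$ and $\mathcal{N}_X^{(p-1)}$, exploiting its two natural projections: $\pi_{xu}\colon(x,y,u)\mapsto(x,u)$ is birational with regular inverse $(x,u)\mapsto(x,u-x,u)$, so $\mathcal{F}_{X,Y}^{(p)}$ and $\mathcal{F}_X^{(p)}$ are simultaneously irreducible; while $\pi_{xy}\colon(x,y,u)\mapsto(x,y)$ surjects onto the affine bi-cone $\tilde{\mathcal{N}}_X^{(p-1)}\subset\C^n\times\C^n$ of $\mathcal{N}_X^{(p-1)}$, since a general point $(x,u-x,u)\in\mathcal{F}_{X,Y}^{(p)}$ has $(u-x)^{p-1}\perp T_xX$ and these images fill a dense open subset of $\tilde{\mathcal{N}}_X^{(p-1)}$ as $u$ varies over $\C^n$.

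For the forward direction, if $\mathcal{F}_X^{(p)}$ is irreducible then so is $\mathcal{F}_{X,Y}^{(p)}$, and $\pi_{xy}$ maps it onto $\tilde{\mathcal{N}}_X^{(p-1)}$, which is therefore irreducible. Since projective irreducibility of $\mathcal{N}_X^{(p-1)}\subset\PP_x^{n-1}\times\PP_y^{n-1}$ is equivalent to irreducibility of its $(\C^*)^2$-bi-cone $\tilde{\mathcal{N}}_X^{(p-1)}$, I conclude that $\mathcal{N}_X^{(p-1)}$ is irreducible. For the converse under the diagonal hypothesis, I would consider the section $\iota\colon\tilde{\mathcal{N}}_X^{(p-1)}\to\mathcal{F}_{X,Y}^{(p)}$ defined by $(x,y)\mapsto(x,y,x+y)$: it is a closed embedding whose image contains the dense open subset that defines $\mathcal{F}_{X,Y}^{(p)}$, and the diagonal condition is invoked to rule out additional components of $\mathcal{F}_{X,Y}^{(p)}$ outside the image of $\iota$. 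Irreducibility of $\mathcal{N}_X^{(p-1)}$ then lifts to irreducibility of $\tilde{\mathcal{N}}_X^{(p-1)}$, then to $\mathcal{F}_{X,Y}^{(p)}$ via $\iota$, and finally to $\mathcal{F}_X^{(p)}$ via $\pi_{xu}$.

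The main obstacle will be to pinpoint the precise role of the diagonal hypothesis in the converse. The concern is that on the diagonal locus $[x]=[y]$ the $(\C^*)^2$-action on $\tilde{\mathcal{N}}_X^{(p-1)}$ degenerates, since the two scaling directions are no longer transverse to the projective fibre; consequently the reconstruction of the affine bi-cone from the projective variety $\mathcal{N}_X^{(p-1)}$ could in principle acquire or lose components along that locus, and correspondingly $\mathcal{F}_{X,Y}^{(p)}$ could pick up limit components where $u-x$ becomes proportional to $x$. Imposing $\mathcal{N}_X^{(p-1)}\cap\Delta=\emptyset$ guarantees that the bi-cone structure is honestly $(\C^*)^2$-equivariant over $\mathcal{N}_X^{(p-1)}$, so that irreducibility transfers cleanly through the chain above. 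Making this transfer rigorous by a careful analysis of the limit behavior of $\mathcal{F}_{X,Y}^{(p)}$ along $\{u\parallel x\}$ will be the technical heart of the argument.
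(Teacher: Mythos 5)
Your forward direction matches the paper's: project the joint correspondence $\mathcal{F}_{X,Y}^{(p)}$ (isomorphic to $\mathcal{F}_X^{(p)}$ via $\pi_{xu}$) onto $\C_x^n\times\C_y^n$ to get $\mathcal{N}_X^{(p-1)}$.

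For the converse your route is genuinely different from the paper's, and in fact cleaner, but you have misdiagnosed the role of the diagonal hypothesis. The section $\iota\colon(x,y)\mapsto(x,y,x+y)$ is a \emph{closed} embedding of $\C^{2n}$ into $\C^{3n}$, and you yourself observe that its image contains the dense defining subset of $\mathcal{F}_{X,Y}^{(p)}$; but then its image, being closed and dense, is \emph{all} of $\mathcal{F}_{X,Y}^{(p)}$. So $\iota$ restricts to an isomorphism $\tilde{\mathcal{N}}_X^{(p-1)}\xrightarrow{\ \sim\ }\mathcal{F}_{X,Y}^{(p)}$ and there are simply no ``additional components outside the image of $\iota$'' to rule out — that worry is vacuous. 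If you carry your own chain of identifications to its conclusion, you get $\tilde{\mathcal{N}}_X^{(p-1)}\cong\mathcal{F}_{X,Y}^{(p)}\cong\mathcal{F}_X^{(p)}$ with no hypothesis at all, together with the bi-cone transfer between the projective $\mathcal{N}_X^{(p-1)}$ and $\tilde{\mathcal{N}}_X^{(p-1)}$ (the forward transfer is automatic; the reverse transfer needs a small check that $\tilde{\mathcal{N}}_X^{(p-1)}$, being equidimensional of dimension $n$, has no component inside $\{x=0\}\cup\{y=0\}$, which you left implicit).

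The paper's converse argument is of a different nature: it regards the (projectivized) joint correspondence as fibered over the projective $(p-1)$-conormal variety, with the fiber over $([x],[y])$ equal to the linear span $\langle x,y\rangle\subset\C_u^n$. The diagonal hypothesis $\mathcal{N}_X^{(p-1)}\cap\Delta=\emptyset$ is invoked precisely so that this span is $2$-dimensional at \emph{every} point of $\mathcal{N}_X^{(p-1)}$; irreducibility of the total space is then deduced from irreducibility of the base together with irreducibility and equidimensionality of the fibers. Note this fibration picture only makes sense after projectivizing in $(x,y)$: the literal affine fiber of $\pi_{xy}$ over $(x,y)$ is the single point $u=x+y$, which is why your section $\iota$ is an isomorphism and why you do not actually encounter the diagonal at all. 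In short, your proof plan for the converse is valid, but the last paragraph of your proposal chases a phantom: the diagonal hypothesis belongs to the paper's fibration argument, not to yours, and a precise version of your argument would not need it.
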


\begin{proof}
If $\mathcal{F}_{X}^{(p)}$ is irreducible, then $\mathcal{F}_{X,Y}^{(p)}$ is irreducible as well, and $\mathcal{N}_X^{(p-1)}$ is the projection of $\mathcal{F}_{X,Y}^{(p)}$ onto $\C_x^n\times \C_y^n$. Therefore $\mathcal{N}_X^{(p-1)}$ is irreducible.

Conversely, assume that $\Delta\cap\mathcal{N}_X^{(p-1)}=\emptyset$. Denote with $\pi$ the projection of $\mathcal{F}_{X,Y}^{(p)}$ onto $\C_x^n\times \C_y^n$. In particular $\pi(\mathcal{F}_{X,Y}^{(p)})=\mathcal{N}_X^{(p-1)}$. For every $(x,y)\in \mathcal{N}_X^{(p-1)}$, the fiber $\pi^{-1}((x,y))$ consists of those data points $u\in\langle x,y\rangle$, hence it is irreducible. Moreover, the fiber has always dimension 2 because $\mathcal{N}_X^{(p-1)}\cap\Delta=\emptyset$. Therefore $\mathcal{F}_{X,Y}^{(p)}$ is irreducible and its projection $\mathcal{F}_X^{(p)}\subset\C_x^n\times \C_u^n$ is irreducible as well.
\end{proof}

We do not know a counterexample to the second sentence of Proposition~\ref{prop:p-conormal-irreducible} if the condition that $\mathcal{N}_X^{(p-1)}$ does not intersect the diagonal $\Delta\subset\PP_x^{n-1}\times\PP_y^{n-1}$ is not satisfied.

We also introduce the {\em projective joint $p$-norm correspondence} $\mathcal{PF}_{X,Y}^{(p)}$ as the closure of the image of $\mathcal{F}_{X,Y}^{(p)}\cap[(\C_x^n\setminus\{0\})\times (\C_y^n\setminus\{0\})\times \C_u^n]$ in $\PP_x^{n-1}\times \PP_y^{n-1}\times \C_u^n$.

\begin{proposition}\label{prop: projective joint p correspondence}
Let $X\subset\C^n$ be an irreducible affine cone, let $Y^{(p-1)}\subset \C^n$ be the $(p-1)$-dual variety of $X$, and assume that neither $X$ nor $Y^{(p-1)}$ is contained in $Q_p$. The variety $\mathcal{PF}_{X,Y}^{(p)}$ is equidimensional of dimension $n$ in $\PP_x^{n-1}\times\PP_y^{n-1}\times\C_u^n$. If $\mathcal{F}_{X,Y}^{(p)}$ is irreducible, then $\mathcal{PF}_{X,Y}^{(p)}$ is irreducible. It is the zero set of the tri-homogeneous ideal
\begin{equation}\label{eq: ideal proj joint p-norm corr}
\left(N_X^{(p-1)}+
\left\langle\mbox{$3\times 3$ minors of}\begin{pmatrix}x\\y\\u\end{pmatrix}\right\rangle\right)\colon
 \left\langle q_p(x) \cdot q_p(y)\right\rangle ^\infty\,.
\end{equation}
\end{proposition}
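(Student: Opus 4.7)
The plan is to mirror the proof of the preceding theorem about $\mathcal{PF}_X^{(p)}$, exploiting the birational map $\mathcal{F}_{X,Y}^{(p)}\to\mathcal{F}_X^{(p)}$, $(x,y,u)\mapsto(x,u)$, together with the fact that the projection of $\mathcal{F}_{X,Y}^{(p)}$ onto the first two factors is $\mathcal{N}_X^{(p-1)}$. First I would verify that the ideal in \eqref{eq: ideal proj joint p-norm corr} is tri-homogeneous in $(x,y,u)$: the generators of $N_X^{(p-1)}$ are bi-homogeneous in $(x,y)$ because each entry of the augmented Jacobian matrix is homogeneous of degree $p-1$ in $y$ and homogeneous in $x$; the $3\times 3$ minors of the matrix with rows $x,y,u$ are tri-linear; and $q_p(x)q_p(y)$ is bi-homogeneous, so saturation preserves tri-homogeneity.

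Next I would establish set-theoretic equality with $\mathcal{PF}_{X,Y}^{(p)}$. The inclusion $\mathcal{PF}_{X,Y}^{(p)} \subseteq V(\text{ideal})$ is immediate: any dense point $(x,u-x,u)\in\mathcal{F}_{X,Y}^{(p)}$ with $x\in X_{\mathrm{sm}}$ and $(u-x)^{p-1}\perp T_xX$ satisfies $N_X^{(p-1)}$ at $(x,u-x)$ and has $u=x+(u-x)\in\mathrm{span}(x,u-x)$; Zariski closure extends the inclusion. For the reverse inclusion I would take $([x],[y],u)\in V(\text{ideal})$ with $q_p(x)q_p(y)\neq 0$. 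The crucial observation is that $x$ and $y$ are then linearly independent: if instead $y=\alpha x$, then $y^{p-1}=\alpha^{p-1}x^{p-1}$ would lie in the row span of $\mathrm{Jac}_X(x)$, and taking the dot product with $x$ together with Euler's identity $x\cdot\nabla g_i(x)=d_i\,g_i(x)=0$ would force $\alpha^{p-1}q_p(x)=0$, contradicting the saturation. Independence lets one solve uniquely $u=\lambda x+\mu y$; the cone property $T_{\lambda x}X=T_xX$ and the homogeneity of $y^{p-1}\perp T_xX$ then give $(u-\lambda x)^{p-1}=\mu^{p-1}y^{p-1}\perp T_{\lambda x}X$, so $(\lambda x,\mu y,u)\in\mathcal{F}_{X,Y}^{(p)}$. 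The degenerate cases $\lambda=0$ or $\mu=0$ are absorbed by the perturbation $x_\varepsilon\coloneqq\varepsilon x$, $u_\varepsilon\coloneqq u+\varepsilon x$ already used in the proof for $\mathcal{PF}_X^{(p)}$, placing $([x],[y],u)$ in the Zariski closure $\mathcal{PF}_{X,Y}^{(p)}$.

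The dimension and irreducibility statements then follow by transporting the corresponding properties of $\mathcal{F}_{X,Y}^{(p)}$ (recorded just before Proposition~\ref{prop:p-conormal-irreducible}) along the rational map $\mathcal{F}_{X,Y}^{(p)}\dashrightarrow\mathcal{PF}_{X,Y}^{(p)}$, which has generically finite fibers since $(\lambda,\mu)$ are uniquely determined by $u=\lambda x+\mu y$ once $[x],[y],u$ are fixed. Each $n$-dimensional component of $\mathcal{F}_{X,Y}^{(p)}$ therefore maps onto an $n$-dimensional component of $\mathcal{PF}_{X,Y}^{(p)}$, and irreducibility is preserved under the stated hypothesis. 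The step I expect to require the most care is the role of the assumptions $X\not\subseteq Q_p$ and $Y^{(p-1)}\not\subseteq Q_p$: they are what guarantees both that the linear-independence argument above applies on a dense subset of the candidate variety, and that the saturation by $q_p(x)q_p(y)$ does not strip off any $n$-dimensional component coming from $\mathcal{F}_{X,Y}^{(p)}$.
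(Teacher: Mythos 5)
Your proposal follows essentially the same route as the paper: first the easy inclusion $\mathcal{PF}_{X,Y}^{(p)}\subseteq V(\text{ideal})$, then linear independence of $x,y$ on the locus $q_p(x)q_p(y)\neq0$, then the unique decomposition $u=\lambda x+\mu y$ and a perturbation to handle the degenerate scalars. Your linear-independence argument is a touch more direct than the paper's: you apply Euler's identity to get $x\cdot y^{p-1}=0$ and then compute $\alpha^{p-1}q_p(x)=0$, whereas the paper phrases the same fact through the biduality $x\perp T_{y^{p-1}}X^\vee$ before concluding both $x\in Q_p$ and $y\in Q_p$. Both are fine.

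One small inaccuracy worth fixing: the single perturbation you write, $x_\varepsilon\coloneqq\varepsilon x$, $u_\varepsilon\coloneqq u+\varepsilon x$, does not absorb both degenerate cases. With this choice the induced $y$-component is $y_\varepsilon=u_\varepsilon-x_\varepsilon=u$. In the case $\lambda=0$ (so $u=\mu y$) this works because then $y_\varepsilon=\mu y$ and $(\mu y)^{p-1}\perp T_xX$. But in the case $\mu=0$ (so $u=\lambda x$) it fails: one would need $(\lambda x)^{p-1}\perp T_xX$, which would force $q_p(x)=0$, and also $[y_\varepsilon]=[x]\neq[y]$, so the limit misses the target. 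For $\mu=0$ you must instead perturb $y$: take $y_\varepsilon\coloneqq\varepsilon y$, $u_\varepsilon\coloneqq u+\varepsilon y$, exactly as the paper does (and for $\lambda=\mu=0$, perturb both). This is a local slip rather than a gap in strategy, but as written the sentence asserts something false.
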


\begin{proof}
The variety $\mathcal{PF}_{X,Y}^{(p)}$ is equidimensional of dimension $n$ by construction and by the assumptions on $\mathcal{F}_{X,Y}^{(p)}$. Similarly, if $\mathcal{F}_{X,Y}^{(p)}$ is irreducible, then $\mathcal{PF}_{X,Y}^{(p)}$ is irreducible. 
We show that $\mathcal{PF}_{X,Y}^{(p)}$ is defined by the ideal \eqref{eq: ideal proj joint p-norm corr}. First, consider a point $(x,y,u)$ with $x\in X\setminus X_{\mathrm{sing}}$, $y^{p-1}\perp T_xX$ and $u=x+y$. In particular $(x,y)\in \mathcal{N}_X^{(p-1)}$ and $\dim\langle x,y,u\rangle\le 2$. Hence $([x],[y],u)$ is a zero of \eqref{eq: ideal proj joint p-norm corr} and $\mathcal{PF}_{X,Y}^{(p)}$ is contained in the variety of \eqref{eq: ideal proj joint p-norm corr}.

Before showing the other containment, observe that the points with $q_p(x)\cdot q_p(y) \neq 0$ are dense in the variety of \eqref{eq: ideal proj joint p-norm corr}, so we may assume $x,y\notin Q_p$ and $y \neq 0$.
Moreover, since $(x,y)\in\mathcal{N}_X^{(p-1)}$, we may assume that $x, y$ are nonsingular points of $X$ and $Y^{(p-1)}$, and that $y^{p-1}\perp T_xX$ and $x\perp T_{y^{p-1}}Y$, where $Y=Y^{(1)}$ is the dual variety of $X$. This implies $x\perp y^{p-1}$. If $\dim\langle x,y\rangle\le 1$, then first we conclude that $y\perp y^{p-1}$, namely $y\in Q_p$. Secondly, we have that $\dim\langle x^{p-1},y^{p-1}\rangle\le 1$, hence $x\perp x^{p-1}$, or equivalently $x\in Q_p$.

Now let $([x], [y], u)$ be in the variety of \eqref{eq: ideal proj joint p-norm corr}. Since $x,y\notin Q_p$ by our assumption, then $x$ and $y$ are linearly independent. Hence there exist unique $c,d\in\C$ such that $u=cx+dy$. If $c,d\neq0$, then we find that $(cx,dy,u)\in\mathcal{F}_{X,Y}^{(p)}\cap((\C_x^n\setminus\{0\})\times(\C_y^n\setminus\{0\})\times \C_u^n)$, thus implying $([x],[y],u)\in\mathcal{PF}_{X,Y}^{(p)}$. If $c\neq0$ but $d=0$, then define $y_\varepsilon\coloneqq\varepsilon y$ and $u_\varepsilon\coloneqq u + \varepsilon y$ for all $\varepsilon\neq 0$. Then $y_\varepsilon\neq 0$, $y_\varepsilon^{p-1}\perp T_{cx}X$ and $u_\varepsilon=cx+y_\varepsilon$, hence $(cx,y_\varepsilon,u_\varepsilon)\in\mathcal{F}_{X,Y}^{(p)}$ for all $\varepsilon\neq 0$. Taking the limit we get $\lim_{\varepsilon\to0}([cx],[y_\varepsilon],u_\varepsilon)=([x],[y],u)$. Therefore $([x],[y],u)\in\mathcal{PF}_{X,Y}^{(p)}$. A similar argument can be used in the remaining cases $d\neq0$ but $c=0$, or $c=d=0$.\qedhere
\end{proof}

We are ready to prove the main result of this section, which expresses the $p$-norm distance degree of a projective variety as a weighted sum of its polar classes. For $p=2$, the next formula coincides with \cite[Theorem 5.4]{DHOST}.

\begin{theorem}\label{thm: p-norm distance degree polar classes}
If $\mathcal{N}_X^{(p-1)}$ is irreducible and does not intersect the diagonal $\Delta\subset\PP^{n-1}\times\PP^{n-1}$, then
\[
\deg_p(X)=\sum_{j=0}^{n-2}(p-1)^{n-1-j}\delta_{n-2-j}(X)\,.
\]
\end{theorem}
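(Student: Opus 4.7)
The plan is to compute $\deg_p(X)$ as an intersection number in the cohomology ring of $\PP^{n-1}_x\times\PP^{n-1}_y$. By Proposition~\ref{prop: projective joint p correspondence} the projective joint correspondence $\mathcal{PF}_{X,Y}^{(p)}$ sits in $\PP^{n-1}_x\times\PP^{n-1}_y\times\C^n_u$ and is cut out, together with the conormality condition defining $\mathcal{N}_X^{(p-1)}$, by the collinearity locus
\[
C_u\coloneqq\{([x],[y])\in\PP^{n-1}\times\PP^{n-1}\mid u\in\langle x,y\rangle\}
\]
for fixed $u$. Since $\mathcal{PF}_{X,Y}^{(p)}\to\mathcal{PF}_X^{(p)}$ is birational (with inverse $(x,u)\mapsto(x,u-x,u)$), the generic fiber of the projection to $\C^n_u$ has cardinality $\deg_p(X)$ and equals $\mathcal{N}_X^{(p-1)}\cap C_u$. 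The two hypotheses are used here: irreducibility of $\mathcal{N}_X^{(p-1)}$ ensures the intersection number is well defined, and disjointness from the diagonal ensures that at every intersection point $x$ and $y$ are linearly independent, so no degenerate triples are counted. Transversality for generic $u$ follows from a Kleiman-type argument for the translation-transitive family $\{C_u\}_{u}$.

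Next I would compute the two classes separately. Writing $[\mathcal{N}_X]=\sum_{i=0}^{n-2}\delta_i(X)\,t_x^{n-1-i}t_y^{i+1}$, consider the $(p-1)$-th power map
\[
\varphi\colon\PP^{n-1}_x\times\PP^{n-1}_y\to\PP^{n-1}_x\times\PP^{n-1}_y,\qquad (x,y)\mapsto(x,(y_1^{p-1},\ldots,y_n^{p-1})).
\]
Set-theoretically $\mathcal{N}_X^{(p-1)}=\varphi^{-1}(\mathcal{N}_X)$, since $y^{p-1}\perp T_xX\iff(x,y^{p-1})\in\mathcal{N}_X$. The Jacobian of $\varphi$ is block-diagonal with blocks $I_n$ and $(p-1)\operatorname{diag}(y_i^{p-2})$, so $\varphi$ is \'etale at every point with all $y_i\neq 0$; together with the irreducibility hypothesis this yields $[\mathcal{N}_X^{(p-1)}]=\varphi^{\ast}[\mathcal{N}_X]$ as cycle classes. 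Since $\varphi_p$ is given by polynomials of degree $p-1$, $\varphi^{\ast}t_x=t_x$ and $\varphi^{\ast}t_y=(p-1)t_y$, hence
\[
[\mathcal{N}_X^{(p-1)}]=\sum_{i=0}^{n-2}(p-1)^{i+1}\delta_i(X)\,t_x^{n-1-i}t_y^{i+1}.
\]
For $[C_u]$, pairing with a general product $L_x\times L_y$ of linear subspaces of dimensions $i$ and $n-2-i$ and projecting from $u$ onto a $\PP^{n-2}$ counts pairs $(x,y)\in L_x\times L_y$ with $\pi_u(x)=\pi_u(y)$ in $\pi_u(L_x)\cap\pi_u(L_y)$, a single point generically. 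Hence $[C_u]=\sum_{k=0}^{n-2}t_x^{k}t_y^{n-2-k}$. Multiplying, only the term $t_x^{i}t_y^{n-2-i}$ of $[C_u]$ pairs with the term $(p-1)^{i+1}\delta_i(X)t_x^{n-1-i}t_y^{i+1}$ of $[\mathcal{N}_X^{(p-1)}]$ to give a class of a point, producing
\[
\deg_p(X)=\sum_{i=0}^{n-2}(p-1)^{i+1}\delta_i(X),
\]
and reindexing $j=n-2-i$ recovers the stated formula.

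The main obstacle I expect is justifying the cycle-theoretic identity $[\mathcal{N}_X^{(p-1)}]=\varphi^{\ast}[\mathcal{N}_X]$, because $\varphi$ is ramified over the coordinate hyperplanes of $\PP^{n-1}_y$ and the preimage $\varphi^{-1}(\mathcal{N}_X)$ is a priori only guaranteed to be purely $n$-dimensional. The irreducibility hypothesis on $\mathcal{N}_X^{(p-1)}$ rules out spurious components and, together with the \'etaleness of $\varphi$ at a general point of $\mathcal{N}_X^{(p-1)}$ (which relies on showing that a general conormal pair has all $y_i\neq 0$, using that $X$ is not contained in a coordinate hyperplane once a mild genericity is granted), the scheme-theoretic pullback is reduced with multiplicity one. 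A secondary technicality is verifying generic transversality of $\mathcal{N}_X^{(p-1)}\cap C_u$, which follows by Kleiman-Bertini in characteristic zero applied to the family $\{C_u\}$.
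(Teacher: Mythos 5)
Your proof follows the same architecture as the paper's: $\deg_p(X)$ is computed as the intersection number $[\mathcal{N}_X^{(p-1)}]\cdot[Z_u]$ in $A^*(\PP_x^{n-1}\times\PP_y^{n-1})$ (your collinearity locus $C_u$ is the paper's $Z_u$), with the class of $\mathcal{N}_X^{(p-1)}$ derived from the classical conormal class by tracking the effect of the coordinatewise $(p-1)$-st power map. The mechanics of that last step differ slightly in presentation: you apply the flat pullback $\varphi^*$ directly, while the paper intersects $\mathcal{N}_X\times\PP_y^{n-1}$ with $\PP_x^{n-1}\times W$ in an auxiliary $\PP_x^{n-1}\times\PP_y^{n-1}\times\PP_z^{n-1}$ (where $W$ is precisely the graph of $\varphi$ on the $y$-factor) and then pushes forward. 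These are the same computation in different clothing, and both arrive at the intermediate class \eqref{eq: class [Z_u]}; the remaining pairing with $[Z_u]$ and reindexing agree.

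You are right to be uneasy about the identity $[\mathcal{N}_X^{(p-1)}]=\varphi^*[\mathcal{N}_X]$. The flat pullback $\varphi^*[\mathcal{N}_X]$ is the cycle of the scheme-theoretic preimage $\varphi^{-1}(\mathcal{N}_X)$, and this equals the \emph{reduced} cycle $[\mathcal{N}_X^{(p-1)}]$ only if $\varphi^{-1}(\mathcal{N}_X)$ is generically reduced, i.e.\ only if a general point of $\mathcal{N}_X$ avoids the ramification locus of $\varphi$ (the coordinate hyperplanes of $\PP_y^{n-1}$). The stated hypotheses---irreducibility of $\mathcal{N}_X^{(p-1)}$ and disjointness from $\Delta$---do not imply this. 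For instance, when $X$ is the coordinate hyperplane $\{x_n=0\}$ both hypotheses hold, yet $\mathcal{N}_X=X\times\{[e_n]\}$ lies entirely inside the ramification locus; then $\varphi^{-1}(\mathcal{N}_X)$ has multiplicity $(p-1)^{n-1}$ along the reduced irreducible $\mathcal{N}_X^{(p-1)}$, and the formula returns $(p-1)^{n-1}\delta_{n-2}(X)=(p-1)^{n-1}$, whereas $\deg_p(X)=1$. The paper's proof has the same issue, hidden in the assertion that $[(\mathcal{N}_X\times\PP_y^{n-1})\cap(\PP_x^{n-1}\times W)]=[\mathcal{N}_X\times\PP_y^{n-1}]\cdot[\PP_x^{n-1}\times W]$ together with the projection being ``generically one-to-one'' onto the reduced $\mathcal{N}_X^{(p-1)}$. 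So your instinct that ``a mild genericity'' is being invoked is exactly right; the needed condition (e.g.\ that $X^\vee$ is not contained in a coordinate hyperplane, so $\varphi$ is unramified at a general point of $\mathcal{N}_X$) should be stated as a hypothesis rather than absorbed as a technicality, both in your write-up and arguably in the theorem itself.

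Two small slips worth correcting: the projection that is birational to $\mathcal{F}_X^{(p)}$ is $\mathcal{F}_{X,Y}^{(p)}\to\mathcal{F}_X^{(p)}$ (the affine joint correspondence), and $\varphi^{-1}(\mathcal{N}_X)$ is purely of dimension $n-2$ in $\PP^{n-1}\times\PP^{n-1}$, not $n$.
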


\begin{proof}
Denote by $Z$ the variety of linearly dependent triples $(x,y,u)\in\C_x^n\times\C_y^n\times\C_u^n$. First, we show that $\mathcal{PF}_{X,Y}^{(p)}\subset(\mathcal{N}_X^{(p-1)}\times \C_u^n)\cap Z$. Indeed, consider a triple $(x,u-x,u)$ such that $x\neq u$, $x\in X_\mathrm{sm}$ and $(u-x)^{p-1}\perp T_xX$. If we define $y=u-x$ then clearly $(x,y)\in\mathcal{N}_X^{(p-1)}$ and the vectors $x,y,u$ are linearly dependent, therefore $(x,u-x,u)\in(\mathcal{N}_X^{(p-1)}\times \C_u^n)\cap Z$. By taking closures we get the inclusion $\mathcal{PF}_{X,Y}^{(p)}\subset(\mathcal{N}_X^{(p-1)}\times \C_u^n)\cap Z$.
On the other hand, we show that $(\mathcal{N}_X^{(p-1)}\times \C_u^n)\cap Z$ is irreducible of dimension $n$. Consider the projection $(\mathcal{N}_X^{(p-1)}\times \C_u^n)\cap Z\to\mathcal{N}_X^{(p-1)}\subset\PP_x^{n-1}\times\PP_y^{n-1}$. It is surjective and all its fibres are irreducible of dimension $2$. Here we are using the assumption that $\mathcal{N}_X^{(p-1)}\cap\Delta=\emptyset$. Since we are assuming that $\mathcal{N}_X^{(p-1)}$ is irreducible, we conclude that $(\mathcal{N}_X^{(p-1)}\times \C_u^n)\cap Z$ is irreducible as well and has dimension $2+(n-2)=n$. Since $\mathcal{PF}_{X,Y}^{(p)}$ is also $n$-dimensional, we obtain the desired equality $\mathcal{PF}_{X,Y}^{(p)}=(\mathcal{N}_X^{(p-1)}\times \C_u^n)\cap Z$. In addition, one might verify with a tangent space computation that the intersection between $(\mathcal{N}_X^{(p-1)}\times \C_u^n)$ and $Z$ is transversal, hence an open dense subset of it is a smooth scheme.

Now consider the projection $\pi\colon\mathcal{PF}_{X,Y}^{(p)}\to\C_u^n$. We know that $\pi$ has general finite fibers of constant cardinality equal to $\deg_p(X)$. Moreover, by generic smoothness \cite[Corollary III.10.7]{hartshorneAlgebraic}, the fiber $\pi^{-1}(u)$ over a general data point $u$ consists of simple points only. Taking advantage of the identity $\mathcal{PF}_{X,Y}^{(p)}=(\mathcal{N}_X^{(p-1)}\times \C_u^n)\cap Z$, the fiber $\pi^{-1}(u)$ is scheme-theoretically equal to $\mathcal{N}_X^{(p-1)}\cap Z_u$, where $Z_u$ is the fiber in $Z$ over $u$. The cardinality of this intersection is the coefficient of $t_x^{n-1}t_y^{n-1}$ in the product $[\mathcal{N}_X^{(p-1)}]\cdot[Z_u]$ in $A^*(\PP_x^{n-1}\times\PP_y^{n-1})\cong\frac{\Z[t_x,t_y]}{(t_x^n,t_y^n)}$.  
A dimension computation shows that $Z_u$ has codimension $n-2$ in $\PP_x^{n-1}\times\PP_y^{n-1}$. Its cohomology class is
\begin{equation}\label{eq: cohomology class Z_u}
[Z_u] = \sum_{j=0}^{n-2}t_x^{n-2-j}t_y^j\in\frac{\Z[t_x,t_y]}{(t_x^n,t_y^n)}\,.
\end{equation}
In order to compute $[\mathcal{N}_X^{(p-1)}]$, consider an additional projective space $\PP_z^{n-1}$ and define the variety $W\subset \PP_y^{n-1}\times\PP_z^{n-1}$ of pairs $([y],[z])$ such that $\dim\langle y^{p-1},z\rangle\le 1$. Then let $\mathcal{N}_X\subset\PP_x^{n-1}\times\PP_z^{n-1}$ be the classical conormal variety and consider the projection $(\mathcal{N}_X\times\PP_y^{n-1})\cap(\PP_x^{n-1}\times W)\to\mathcal{N}_X^{(p-1)}\subset\PP_x^{n-1}\times\PP_y^{n-1}$. This map is surjective and generically one-to-one. This means that $[\mathcal{N}_X^{(p-1)}]$ can be computed from
\[
[(\mathcal{N}_X\times\PP_y^{n-1})\cap(\PP_x^{n-1}\times W)]=[(\mathcal{N}_X\times\PP_y^{n-1})]\cdot[(\PP_x^{n-1}\times W)]\in\frac{\Z[t_x,t_y,t_z]}{(t_x^n,t_y^n,t_z^n)}\,,
\]
On one hand, the variety $W$ has codimension $n-1$ in $\PP_y^{n-1}\times\PP_z^{n-1}$ and its cohomology class is
\begin{equation}\label{eq: cohomology class W}
[W] = \sum_{i=0}^{n-1}(p-1)^{n-1-i}t_y^{n-1-i}t_z^i\,.
\end{equation}
Both identities in \eqref{eq: cohomology class Z_u} and in \eqref{eq: cohomology class W} are special instances of \cite[Corollary 16.27]{millerCombinatorial}. On the other hand,
\[
[\mathcal{N}_X\times\PP_y^{n-1}] = \sum_{k=0}^{n-2}\delta_k(X)t_x^{n-1-k}t_z^{k+1}\,.
\]
Therefore
\[
[(\mathcal{N}_X\times\PP_y^{n-1})\cap(\PP_x^{n-1}\times W)]=\sum_{k=0}^{n-2}\sum_{i=0}^{n-1}(p-1)^{n-1-i}\delta_k(X)t_x^{n-1-k}t_y^{n-1-i}t_z^{k+i+1}\,.
\]
Since $\mathcal{N}_X^{(p-1)}$ is obtained by projecting $(\mathcal{N}_X\times\PP_y^{n-1})\cap(\PP_x^{n-1}\times W)$ onto $\PP_x^{n-1}\times\PP_y^{n-1}$, this corresponds to imposing $k+i+1=\dim(\PP_z^{n-1})=n-1$ in the previous sum, thus getting
\begin{equation}\label{eq: class [Z_u]}
[\mathcal{N}_X^{(p-1)}]=\sum_{k=0}^{n-2}(p-1)^{k+1}\delta_k(X)t_x^{n-1-k}t_y^{k+1}\,.
\end{equation}
Summing up, we have
\begin{align*}
\deg_p(X) & = [Z_u]\cdot[\mathcal{N}_X^{(p-1)}]\\
& = \sum_{j=0}^{n-2}\sum_{k=0}^{n-2}(p-1)^{k+1}\delta_k(X)t_x^{2n-3-j-k}t_y^{j+k+1}\\
& = \left[\sum_{j=0}^{n-2}(p-1)^{n-1-j}\delta_{n-2-j}(X)\right]t_x^{n-1}t_y^{n-1}\,.\qedhere
\end{align*}
\end{proof}

\begin{remark}
More generally, $\deg_p(X)$ is the contribution of the projective $p$-correspondence to the intersection number computed in Theorem \ref{thm: p-norm distance degree polar classes}. If $X$ is in sufficiently general position, then this contribution in fact equals the whole intersection number.
\end{remark}

\begin{theorem}\label{thm: p-degree Chern classes}
Let $X\subset\PP^{n-1}$ be a smooth projective variety transversal to $Q_p$. Then
\[
\deg_p(X) = \sum_{k=0}^m(-1)^{k}\deg(c_{k}(X))(p^{m+1-k}-1)\,,
\]
where $c_{k}(X)$ denotes the $k$-th Chern class of $X$ for all $k\in\{0,\ldots,m\}$.
\end{theorem}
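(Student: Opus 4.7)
The strategy is to combine Theorem~\ref{thm: p-norm distance degree polar classes} with the classical Piene-Holme formula expressing the polar classes of a smooth projective variety in terms of Chern classes of its tangent bundle, and then to simplify the resulting double sum by the binomial theorem. This parallels the strategy used by the authors of \cite{DHOST} to prove their Theorem~5.8 (the case $p=2$).

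First, I would verify that the smoothness of $X$ and its transversality with $Q_p$ imply the hypotheses of Theorem~\ref{thm: p-norm distance degree polar classes}, namely that $\mathcal{N}_X^{(p-1)}$ is irreducible and does not meet the diagonal $\Delta\subset\PP^{n-1}_x\times\PP^{n-1}_y$. The key observation is that any $(x,x)\in\mathcal{N}_X^{(p-1)}\cap\Delta$ satisfies $x^{p-1}\perp T_xX$; combined with the Euler relation $x\in T_xX$ this forces $q_p(x)=x\cdot x^{p-1}=0$, hence $x\in X\cap Q_p$, and transversality then constrains the locus of such $x$ to a proper closed subvariety too small to form a component of $\mathcal{N}_X^{(p-1)}$. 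Thus
\[
\deg_p(X)=\sum_{i=0}^{n-2}(p-1)^{i+1}\delta_i(X).
\]

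Next, I would invoke the Piene-Holme formula: for a smooth projective $X\subset\PP^{n-1}$ of dimension $m$,
\[
\delta_i(X)=\sum_{k=0}^{m}(-1)^k\binom{m+1-k}{i+1}\deg c_k(X).
\]
This can be derived by realising $\mathcal{N}_X$ as the projective bundle $\PP(N^*_{X/\PP^{n-1}})$ over $X$ and applying a Segre-class/projection-formula calculation together with the Euler sequence of $\PP^{n-1}$; it can be verified on low-dimensional test cases such as smooth plane curves and smooth surfaces in $\PP^3$. Substituting it into the previous display and exchanging the order of summation yields
\[
\deg_p(X)=\sum_{k=0}^{m}(-1)^k\deg c_k(X)\sum_{i=0}^{m-k}(p-1)^{i+1}\binom{m+1-k}{i+1},
\]
where the truncation $i\le m-k$ reflects the vanishing of $\binom{m+1-k}{i+1}$ for $i>m-k$.

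Finally, setting $\ell=i+1$ and applying the binomial theorem, the inner sum becomes
\[
\sum_{\ell=1}^{m+1-k}(p-1)^{\ell}\binom{m+1-k}{\ell}=\bigl(1+(p-1)\bigr)^{m+1-k}-1=p^{m+1-k}-1,
\]
which gives the desired Chern-class formula. The main obstacle is the first step: verifying that the smoothness of $X$ and its transversality with $Q_p$ suffice to invoke Theorem~\ref{thm: p-norm distance degree polar classes}, along the lines indicated by the remark following that theorem. The combinatorial simplification in the last step is elementary, so the bulk of the geometric content is concentrated in this transversality/Bertini-type verification.
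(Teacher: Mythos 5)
Your proposal is correct and follows essentially the same route as the paper: apply Theorem~\ref{thm: p-norm distance degree polar classes}, substitute the Piene--Holme relation between polar classes and Chern classes (your version $\delta_i(X)=\sum_{k=0}^m(-1)^k\binom{m+1-k}{i+1}\deg c_k(X)$ is the paper's cited formula from Holme after the reindexing $k\mapsto m-k$), swap the order of summation, and finish with the binomial theorem. The paper, like you, leaves the hypotheses of Theorem~\ref{thm: p-norm distance degree polar classes} implicit here and addresses the ``general position'' issue only in the remark following it, so your flagging of the transversality verification as the genuine remaining obstacle matches the paper's own treatment.
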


\begin{proof}
Let $m=\dim(X)$. Knowing the relations between polar classes and Chern classes (see \cite[Eq. (3)]{Holme})
\[
\delta_i(X)=\sum_{k=i}^m(-1)^{m-k}\binom{k+1}{i+1}\deg(c_{m-k}(X))
\]
we obtain that
\begin{align*}
    \deg_p(X) & = \sum_{j=0}^{n-2}(p-1)^{n-1-j}\delta_{n-2-j}(X)\\
    & = \sum_{j=0}^{n-2}(p-1)^{n-1-j}\left[\sum_{k=n-2-j}^m(-1)^{m-k}\binom{k+1}{n-1-j}\deg(c_{m-k}(X))\right]\\
    & = \sum_{j=0}^{n-2}(p-1)^{n-1-j}\left[\sum_{k=0}^m(-1)^{m-k}\binom{k+1}{n-1-j}\deg(c_{m-k}(X))\right]\\
    & = \sum_{k=0}^m(-1)^{m-k}\deg(c_{m-k}(X))\left[\sum_{j=0}^{n-2}\binom{k+1}{n-1-j}(p-1)^{n-1-j}\right]\\
    & = \sum_{k=0}^m(-1)^{k}\deg(c_{k}(X))\left[\sum_{j=0}^{n-2}\binom{m+1-k}{n-1-j}(p-1)^{n-1-j}\right]\\
    & = \sum_{k=0}^m(-1)^{k}\deg(c_{k}(X))\left[\sum_{j=0}^{n-1}\binom{m+1-k}{j}(p-1)^{j}-1\right]\\
    & = \sum_{k=0}^m(-1)^{k}\deg(c_{k}(X))(p^{m+1-k}-1)\,.\qedhere
\end{align*}
\end{proof}

The previous formula coincides with \cite[Theorem 5.8]{DHOST} for $p=2$ which, in turn, is an alternative formulation of the Catanese-Trifogli formula for the ED degree \cite[p. 6026]{cataneseFocal}. The smoothness hypothesis can be dropped by substituting Chern classes $c_k(X)$ with Chern-Mather classes $c_k^M(X)$, similarly as in \cite[Proposition 2.9]{aluffiProjective}.

There are several formulas that descend from Theorem \ref{thm: p-degree Chern classes}. First, we consider smooth irreducible projective curves. The next result coincides with \cite[Corollary 5.9]{DHOST} for $p=2$.

\begin{corollary}
Let $X$ be a smooth irreducible curve of degree $d$ and genus $g$ in $\PP^{n-1}$, and suppose that $X$ is transversal to $Q_p$. Then
\[
\deg_p(X) = (p-1)[(p+1)d+2g-2]\,.
\]
\end{corollary}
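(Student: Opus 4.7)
The plan is to specialize Theorem \ref{thm: p-degree Chern classes} to the case $m = \dim(X) = 1$ and identify the Chern class degrees using classical facts about smooth projective curves.

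Since $m = 1$, the sum in Theorem \ref{thm: p-degree Chern classes} has only two terms, namely
\begin{equation*}
\deg_p(X) = \deg(c_0(X))(p^{2}-1) - \deg(c_1(X))(p-1).
\end{equation*}
The zeroth Chern class is the fundamental class $[X]$, so $\deg(c_0(X)) = \deg(X) = d$. The first Chern class $c_1(X) = c_1(T_X)$ is represented by a divisor whose degree is $-\deg(K_X) = \chi(X) = 2 - 2g$, by the standard identification of the tangent bundle's first Chern class with the Euler characteristic on a smooth projective curve.

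Substituting these values gives
\begin{equation*}
\deg_p(X) = d(p^{2}-1) - (2-2g)(p-1) = (p-1)\bigl[d(p+1) + 2g - 2\bigr],
\end{equation*}
which is the claimed formula. The only thing to verify in order to apply Theorem \ref{thm: p-degree Chern classes} is that $X$ is smooth and transversal to $Q_p$, both of which are in the hypotheses; and the curve being irreducible ensures that the Chern class computations via $d$ and $g$ are well-defined in the standard way. There is no real obstacle here beyond checking the bookkeeping of signs and the index of summation.
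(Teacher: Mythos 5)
Your proof is correct and follows exactly the same route as the paper: apply Theorem \ref{thm: p-degree Chern classes} with $m=1$ and substitute $\deg(c_0(X))=d$, $\deg(c_1(X))=2-2g$. The paper states this in one line, while you helpfully spell out the Chern class identifications, but the argument is the same.
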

\begin{proof}
The statement is a direct application of Theorem \ref{thm: p-degree Chern classes}, where $\deg(c_0(X))=d$ and $\deg(c_1(X))=2-2g$.
\end{proof}

Secondly, we consider general projective hypersurfaces. The following formula agrees with \cite[Eq. (7.1)]{DHOST} for $p=2$. Note the difference by a factor $p-1$ with respect to the upper bound of \eqref{eq: upper bound p-degree hypersurface} for general affine hypersurfaces. The proof of the identity \eqref{eq: identity Sergey} is due to Sergey Yurkevich.

\begin{proposition}\label{prop: f-degree p-norm general hypersurface}
Let $X\subset\PP^{n-1}$ be a general projective hypersurface of degree $d$. Then
\begin{equation}\label{eq: p-norm degree hypersurface}
\deg_p(X)=d(p-1)\,\frac{(d-1)^{n-1}-(p-1)^{n-1}}{d-p}\,.
\end{equation}
\end{proposition}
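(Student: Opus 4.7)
The plan is to apply Theorem \ref{thm: p-norm distance degree polar classes}, which expresses $\deg_p(X)$ as a weighted sum of the polar classes $\delta_i(X)$. By Bertini's theorem, a general hypersurface $X\subset\PP^{n-1}$ of degree $d$ is smooth and transversal to $Q_p$. The hypotheses of Theorem \ref{thm: p-norm distance degree polar classes}---namely that $\mathcal{N}_X^{(p-1)}$ is irreducible and disjoint from the diagonal $\Delta$---are expected to hold for general $X$; in any case, the remark following Theorem \ref{thm: p-norm distance degree polar classes} guarantees that for $X$ in sufficiently general position, $\deg_p(X)$ equals the full intersection number $[\mathcal{N}_X^{(p-1)}]\cdot[Z_u]$ computed in the proof, so the polar class formula still applies.

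The core computation is to determine the polar classes of a smooth hypersurface of degree $d$. The normal bundle of $X$ in $\PP^{n-1}$ is a line bundle with Chern class $dh|_X$, so from the short exact sequence $0\to T_X\to T_{\PP^{n-1}}|_X\to N_{X/\PP^{n-1}}\to 0$ one obtains $c(T_X)=(1+h)^n/(1+dh)|_X$, where $h$ denotes the hyperplane class. Using $\int_X h^{n-2}=d$ one reads off $\deg c_k(X)$ from the coefficients of this expansion. Plugging these into the identity
\[
\delta_i(X)=\sum_{k=i}^{n-2}(-1)^{n-2-k}\binom{k+1}{i+1}\deg c_{n-2-k}(X)
\]
used in the proof of Theorem \ref{thm: p-degree Chern classes} and simplifying the resulting double binomial sum yields the clean closed form
\[
\delta_i(X)=d(d-1)^{n-2-i},\quad i=0,1,\ldots,n-2.
\]
This agrees with the well-known endpoint values $\delta_{n-2}(X)=\deg X=d$ and $\delta_0(X)=\deg X^\vee=d(d-1)^{n-2}$.

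Substituting this into Theorem \ref{thm: p-norm distance degree polar classes} gives
\[
\deg_p(X)=\sum_{j=0}^{n-2}(p-1)^{n-1-j}\delta_{n-2-j}(X)=d(p-1)\sum_{j=0}^{n-2}(p-1)^{n-2-j}(d-1)^j,
\]
and the inner geometric-style sum evaluates to $[(d-1)^{n-1}-(p-1)^{n-1}]/(d-p)$, yielding \eqref{eq: p-norm degree hypersurface} at once.

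The main technical obstacle is the combinatorial reduction from the Chern class double sum to the tidy expression $d(d-1)^{n-2-i}$; once this identity is established, everything else is routine algebra. An alternative that bypasses polar classes is to apply Theorem \ref{thm: p-degree Chern classes} directly and simplify the resulting Chern class sum via generating functions---rewriting it in terms of $(1-h)^n/(1-dh)$ and its truncated evaluation at $h=1/p$---but this involves comparable bookkeeping and is less conceptually transparent than the polar class route.
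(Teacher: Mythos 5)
Your approach is correct but genuinely different from the paper's. The paper proves this by applying Theorem~\ref{thm: p-degree Chern classes} directly: it expands $c(X)=(1+h)^n/(1+dh)$ to get $\deg(c_k(X))=d\sum_{i=0}^k\binom{n}{i}(-d)^{k-i}$, substitutes into the Chern class formula, and then verifies the nontrivial double-sum identity (due to Yurkevich) by swapping the order of summation and applying geometric series twice. You instead go through Theorem~\ref{thm: p-norm distance degree polar classes}, using the classical closed form $\delta_i(X)=d(d-1)^{n-2-i}$ for the polar classes of a smooth hypersurface of degree $d$; the final sum then collapses to a single geometric series. Your route is cleaner at the level of the final simplification and avoids Yurkevich's identity entirely.

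One refinement: you flag the derivation of $\delta_i(X)=d(d-1)^{n-2-i}$ from the Chern-class-to-polar-class conversion as the ``main technical obstacle,'' but this closed form has a short direct proof that sidesteps that combinatorial reduction altogether. The conormal variety $\mathcal{N}_X$ of a smooth hypersurface is the graph of the Gauss map $\gamma\colon x\mapsto[\nabla f(x)]$, whose coordinate functions have degree $d-1$, so $\gamma^* t_y=(d-1)h$ on $X$. Then
\[
\delta_i(X)=\int_X h^i\cdot\gamma^*(t_y^{\,n-2-i})=(d-1)^{n-2-i}\int_X h^{n-2}=d(d-1)^{n-2-i}\,,
\]
which also makes your endpoint checks $\delta_{n-2}=d$ and $\delta_0=\deg X^\vee=d(d-1)^{n-2}$ immediate special cases rather than external sanity checks. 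With this replacement your argument is complete and, in my view, more conceptually transparent than the paper's computation.
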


\begin{proof}
The Chern polynomial of $X$ is
\[
c(X)=\sum_{k=0}^{n-2}c_k(X)\cdot h^k =\frac{(1+h)^n}{1+d\,h}=\sum_{k=0}^{n-2}\left[\sum_{i=0}^k\binom{n}{i}(-d)^{k-i}\right]\cdot h^i\,,
\]
where we are computing modulo $\langle h^{n-1}\rangle$. It is derived from the short exact sequence of sheaves
\[
0\to \mathcal{T}X\to \mathcal{T}(\PP^{n-1}|X)\to \mathcal{N}(X/\PP^{n-1})\to 0\,,
\]
and applying Whitney formula, where $\mathcal{T}X$ is the tangent bundle of $X$ and $\mathcal{N}(X/\PP^{n-1})$ is the normal bundle of $X$ in $\PP^{n-1}$. Since $X$ is a degree $d$ subvariety in $\PP^{n-1}$, $\deg(h^k)=d$. As a consequence, we have $\deg(c_k(X))=d\sum_{i=0}^k\binom{n}{i}(-d)^{k-i}$. Applying Theorem \ref{thm: p-degree Chern classes} we get
\[
\deg_p(X) = d\sum_{k=0}^{n-2}(p^{n-1-k}-1)\sum_{i=0}^k\binom{n}{i}(-1)^id^{k-i}\,,
\]
hence it remains to show that
\begin{equation}\label{eq: identity Sergey}
\sum_{k=0}^{n-2}(p^{n-1-k}-1)\sum_{i=0}^k\binom{n}{i}(-1)^id^{k-i} = (p-1)\,\frac{(d-1)^{n-1}-(p-1)^{n-1}}{d-p}\,.
\end{equation}
Changing the order of summation at the left-hand side and then using the geometric series formula, we get
\begin{align*}
\sum_{k=0}^{n-2}(p^{n-1-k}-1) \sum_{i=0}^{k}\binom{n}{i}(-1)^{i} d^{k-i} &=\sum_{k=0}^{n-2} \sum_{i=0}^{k}\binom{n}{i}(-1)^{i} p^{n-1-k} d^{k-i}-\sum_{k=0}^{n-2} \sum_{i=0}^{k}\binom{n}{i}(-1)^{i} d^{k-i} \\
&=\sum_{i=0}^{n-2}(-1)^{i}\binom{n}{i}\sum_{k=i}^{n-2} d^{k-i} p^{n-1-k}-\sum_{i=0}^{n-2}(-1)^{i}\binom{n}{i}\sum_{k=i}^{n-2} d^{k-i} \\
&=\sum_{i=0}^{n-2}(-1)^{i}\binom{n}{i}\frac{p(d^{n-1-i}-p^{n-1-i})}{d-p}-\sum_{i=0}^{n-2}(-1)^{i}\binom{n}{i}\frac{d^{n-1-i}-1}{d-1}\,.
\end{align*}
The last expression is equal to
\[
\frac{p}{d-p}\left[\sum_{i=0}^{n-2}(-1)^{i}\binom{n}{i} d^{n-1-i}- \sum_{i=0}^{n-2}(-1)^{i}\binom{n}{i} p^{n-1-i}\right]-\frac{1}{d-1}\left[ \sum_{i=0}^{n-2}(-1)^{i}\binom{n}{i} d^{n-1-i}-\sum_{i=0}^{n-2}(-1)^{i}\binom{n}{i}\right]\,.
\]
The statement follows applying the Binomial Theorem to the previous four summands.\qedhere
\end{proof}

The next result coincides with \cite[Corollary 5.11]{DHOST} for $p=2$.

\begin{corollary}
Let $X\subset\PP^{n-1}$ be an $m$-dimensional smooth projective toric variety, with coordinates such that $X$ is transversal to $Q_p$. If $V_j$ denotes the sum of the normalized volumes of all $j$-dimensional faces of the simple lattice polytope $P$ associated with $X$, then
\[
\deg_p(X) = \sum_{k=0}^m(-1)^k(p^{m+1-k}-1)\cdot V_{m-k}\,.
\]
\end{corollary}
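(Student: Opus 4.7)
The plan is to reduce immediately to Theorem~\ref{thm: p-degree Chern classes} and then invoke the standard toric interpretation of Chern numbers in terms of face volumes of the defining polytope. Since $X$ is smooth and transversal to $Q_p$, Theorem~\ref{thm: p-degree Chern classes} applies and gives
\[
\deg_p(X) \;=\; \sum_{k=0}^m (-1)^{k}\deg(c_{k}(X))\,(p^{m+1-k}-1)\,.
\]
So the whole task is to identify $\deg(c_k(X))$ with $V_{m-k}$.

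For this identification I would invoke the classical formula for Chern classes of a smooth projective toric variety in terms of its associated simple lattice polytope $P$. Concretely, if $\Sigma$ is the fan of $X$ and the torus-invariant divisors $D_1,\dots,D_N$ correspond to the facets of $P$, then by Ehlers' formula (see Fulton, \emph{Introduction to Toric Varieties}, \S 3.2 and \S 5.3, or the treatment in \cite{DHOST} leading to their Corollary 5.11) the total Chern class is $c(X)=\prod_{i=1}^N(1+[D_i])$, and for each $k$ the degree $\deg(c_k(X))$ equals the sum, over $(m-k)$-dimensional cones $\sigma$ of $\Sigma$, of the intersection number of the corresponding torus-invariant orbit closure with a generic hyperplane section — and this intersection number is precisely the normalized lattice volume of the $(m-k)$-dimensional face of $P$ dual to $\sigma$. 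Summed over all faces of that dimension, this is exactly $V_{m-k}$.

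Substituting $\deg(c_k(X))=V_{m-k}$ into the Chern-class formula yields
\[
\deg_p(X) \;=\; \sum_{k=0}^m (-1)^{k}(p^{m+1-k}-1)\cdot V_{m-k}\,,
\]
which is the claimed identity.

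The only nontrivial input is the toric identity $\deg(c_k(X))=V_{m-k}$; this is not proved within the paper but is a well-known consequence of Ehlers' theorem, and for $p=2$ it already underlies \cite[Corollary 5.11]{DHOST}, so no new work beyond citing the result is required. There is no genuine obstacle: once Theorem~\ref{thm: p-degree Chern classes} is in hand, the corollary is a pure substitution.
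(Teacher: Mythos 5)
Your approach is exactly what the paper intends: apply Theorem~\ref{thm: p-degree Chern classes} and substitute the standard toric identity $\deg(c_k(X))=V_{m-k}$ coming from Ehlers' formula, which is precisely why the paper states this corollary without a separate proof. One small wording slip: in Ehlers' formula $c_k(X)$ is a sum over $k$-dimensional cones of $\Sigma$, not $(m-k)$-dimensional ones; the $(m-k)$-dimensional objects are the corresponding orbit closures and the dual faces of $P$, and keeping these dimensions straight is what makes the identity $\deg(c_k(X))=V_{m-k}$ come out correctly.
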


\begin{example}\label{ex: p-norm degree rational normal curve toric}
Consider a rational normal curve $C_d\subset\PP^d$ of degree $d$ in general coordinates. The associated polytope $P$ is a segment of integer length $d$.
The formula above yields
\[
\deg_p(C_d) = (p^2-1)\cdot V_1-(p-1)\cdot V_0 = d(p^2-1)-2(p-1) = (p-1)[(p+1)d-2]\,.
\]
For $p=2$, the previous formula simplifies to $\mathrm{EDdegree}(C_d)=3d-2$.
\end{example}

Let $X=\PP^{n_1-1}\times\cdots\times\PP^{n_k-1}$ be a Segre product of projective spaces embedded in $\PP(\mathrm{Sym}^{\omega_1}\C^{n_1}\otimes\cdots\otimes\mathrm{Sym}^{\omega_k}\C^{n_k})$ via the line bundle $\mathcal{O}(\omega_1,\ldots,\omega_k)$, where $\omega_1,\ldots,\omega_k$ are positive integers. The variety $X$ is called a {\em Segre-Veronese product} of projective spaces and parametrizes the partially symmetric tensors of rank at most one in $\mathrm{Sym}^{\omega_1}\C^{n_1}\otimes\cdots\otimes\mathrm{Sym}^{\omega_k}\C^{n_k}$. We can compute the $p$-norm distance degree of $X$ using Theorem \ref{thm: p-degree Chern classes}. The next formula coincides with the one computed in \cite[Remark 4.21]{ottaviani2020asymptotics} for $p=2$.

\begin{corollary}\label{cor: gen p-norm degree Segre}
Let $X\subset \PP(\mathrm{Sym}^{\omega_1}\C^{n_1}\otimes\cdots\otimes\mathrm{Sym}^{\omega_k}\C^{n_k})$ be the Segre-Veronese variety of partially-symmetric tensors of rank at most one, and assume that $X$ is transversal to $Q_p$. Let $N=\dim(X)=n_1+\cdots+n_k-k$. Then
\begin{equation}\label{eq: gen p-norm degree Segre}
\deg_p(X)=\sum_{j=0}^N(-1)^j(p^{N+1-j}-1)(N-j)!\left[\sum_{i_1+\cdots+i_k=j}\prod_{l=1}^k\frac{\binom{n_l}{i_l}\omega_l^{n_l-i_l-1}}{(n_l-i_l-1)!}\right]\,.
\end{equation}
\end{corollary}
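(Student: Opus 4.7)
Since $X=\PP^{n_1-1}\times\cdots\times\PP^{n_k-1}$ is smooth of dimension $N=n_1+\cdots+n_k-k$ and transversal to $Q_p$ by hypothesis, Theorem~\ref{thm: p-degree Chern classes} applies and reduces the task to computing $\deg(c_j(X))$ with respect to the Segre-Veronese embedding for each $j\in\{0,\ldots,N\}$, and then substituting into the Chern class formula.

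First, I would identify the cohomology ring $H^*(X,\Z)\cong\Z[h_1,\ldots,h_k]/(h_1^{n_1},\ldots,h_k^{n_k})$, where $h_l$ is the pullback of the hyperplane class of $\PP^{n_l-1}$. Since the tangent bundle splits as $T_X=\bigoplus_l \pi_l^*T_{\PP^{n_l-1}}$, Whitney's formula and the Euler sequence on each factor give
\[
c(T_X)=\prod_{l=1}^k(1+h_l)^{n_l}\,,\qquad c_j(X)=\sum_{i_1+\cdots+i_k=j}\prod_{l=1}^k\binom{n_l}{i_l}h_l^{i_l}\,.
\]

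Second, the embedding corresponds to the line bundle $\mathcal{O}(\omega_1,\ldots,\omega_k)$, whose first Chern class is $H=\omega_1h_1+\cdots+\omega_kh_k$. The degree of a codimension-$j$ class is obtained by pairing with $H^{N-j}$. Expanding $H^{N-j}$ by the multinomial theorem and using the standard integration rule $\int_X h_1^{a_1}\cdots h_k^{a_k}=1$ iff $a_l=n_l-1$ for every $l$, only the term with $a_l=n_l-1-i_l$ survives. A direct combinatorial calculation then yields
\[
\deg(c_j(X))=(N-j)!\sum_{i_1+\cdots+i_k=j}\prod_{l=1}^k\frac{\binom{n_l}{i_l}\omega_l^{n_l-i_l-1}}{(n_l-i_l-1)!}\,,
\]
where the factor $(N-j)!/\prod_l(n_l-1-i_l)!$ comes from the multinomial coefficient $\binom{N-j}{n_1-1-i_1,\ldots,n_k-1-i_k}$, and terms with $i_l=n_l$ are understood to vanish (consistently with $h_l^{n_l}=0$). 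Plugging this expression into Theorem~\ref{thm: p-degree Chern classes} gives the desired identity~\eqref{eq: gen p-norm degree Segre}.

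The argument is essentially routine intersection theory on a product of projective spaces, so no serious obstacle is expected; the only care required is the combinatorial bookkeeping of the multinomial coefficients in the expansion of $H^{N-j}$. As a sanity check, specializing to $k=1$, $n_1=2$, $\omega_1=d$ recovers the formula $\deg_p(C_d)=(p-1)[(p+1)d-2]$ from Example~\ref{ex: p-norm degree rational normal curve toric}, and specializing to $p=2$ reproduces the formula of Ottaviani and Sodomaco cited in the statement.
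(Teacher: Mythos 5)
Your proposal is correct and follows the same high-level strategy as the paper: invoke Theorem~\ref{thm: p-degree Chern classes} and feed in the degrees of the Chern classes of the Segre--Veronese variety. The only difference is that the paper simply cites the closed form $\deg(c_j(X)) = (N-j)!\sum_{i_1+\cdots+i_k=j}\prod_l\binom{n_l}{i_l}\omega_l^{n_l-i_l-1}/(n_l-i_l-1)!$ from \cite[Eq.~(5.3.2)]{sodomaco2020}, whereas you rederive it from scratch via the Euler sequence on each factor, Whitney's formula, and the multinomial expansion of $H^{N-j}=(\sum_l\omega_l h_l)^{N-j}$ against the integration rule $\int_X h_1^{n_1-1}\cdots h_k^{n_k-1}=1$; this derivation is correct and makes the corollary self-contained. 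Your sanity checks ($k=1$, $n_1=2$, $\omega_1=d$ reproducing Example~\ref{ex: p-norm degree rational normal curve toric}, and $p=2$ matching Ottaviani--Sodomaco) are accurate.
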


\begin{proof}
The statement follows by special case of a Chern class computation made by the third author in \cite[Eq. (5.3.2)]{sodomaco2020}, which tells us that
\begin{equation}\label{eq: chern classes Segre-Veronese}
\deg(c_j(X)) = (N-j)!\left[\sum_{i_1+\cdots+i_k=j}\prod_{l=1}^k\frac{\binom{n_l}{i_l}\omega_l^{n_l-i_l-1}}{(n_l-i_l-1)!}\right]\quad\forall\,j\in\{0,\ldots,N\}\,.\qedhere
\end{equation}
\end{proof}

\begin{example}
For example, if $k=2$, $n_1=2$, $\omega_1=\omega_2=1$ and $n_2=n$, then $N=n$ and the identity \eqref{eq: gen p-norm degree Segre} simplifies to
\[
\deg_p(\PP^1\times\PP^{n-1})=\sum_{j=0}^n(-1)^j(p^{n+1-j}-1)(n-j)!\left[\frac{\binom{n}{j}}{(n-1-j)!}+\frac{2\binom{n}{j-1}}{(n-j)!}\right]\,.
\]
With a similar computation as in \cite[Remark 4.21]{ottaviani2020asymptotics}, we get
\[
\deg_p(\PP^1\times\PP^{n-1}) = (p-1)^{n-1}[np^2-2p+2]\,.
\]
\end{example}

Another special case occurs when $k=1$. The next result coincides with \cite[Proposition 7.10]{DHOST} for $p=2$.

\begin{corollary}
Let $X\subset\PP(\mathrm{Sym}^\omega\C^n)$ be an $\omega$-th Veronese embedding of $\PP^{n-1}$. Assume that $X$ is transversal to $Q_p$. Then
\[
\deg_p(X)=\frac{(\omega p-1)^n-(\omega-1)^n}{\omega}\,.
\]
\end{corollary}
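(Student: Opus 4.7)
The plan is to specialize Corollary \ref{cor: gen p-norm degree Segre} to $k=1$, $n_1=n$ and $\omega_1=\omega$, so that $N=\dim(X)=n-1$, and then reduce the resulting sum to the compact expression in the statement by two applications of the binomial theorem.

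First I would observe that, with $k=1$, only the multi-index $i_1=j$ contributes to the inner bracket in \eqref{eq: gen p-norm degree Segre}, and
\[
(N-j)!\cdot\frac{\binom{n}{j}\omega^{n-j-1}}{(n-j-1)!}=\binom{n}{j}\omega^{n-j-1}\,,
\]
because $(N-j)!=(n-1-j)!$ cancels with $(n-j-1)!$ in the denominator. Hence Corollary \ref{cor: gen p-norm degree Segre} yields
\[
\deg_p(X)=\sum_{j=0}^{n-1}(-1)^j(p^{n-j}-1)\binom{n}{j}\omega^{n-j-1}\,.
\]

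Next I would pull out the factor $1/\omega$ and split the sum into two pieces, one involving $(\omega p)^{n-j}$ and the other involving $\omega^{n-j}$:
\[
\deg_p(X)=\frac{1}{\omega}\left[\sum_{j=0}^{n-1}(-1)^j\binom{n}{j}(\omega p)^{n-j}-\sum_{j=0}^{n-1}(-1)^j\binom{n}{j}\omega^{n-j}\right]\,.
\]
The binomial theorem gives the two complete sums
\[
\sum_{j=0}^{n}(-1)^j\binom{n}{j}(\omega p)^{n-j}=(\omega p-1)^n\,,\qquad\sum_{j=0}^{n}(-1)^j\binom{n}{j}\omega^{n-j}=(\omega-1)^n\,,
\]
and subtracting the missing $j=n$ term, which equals $(-1)^n$ in each case, the two $(-1)^n$ contributions cancel when we take the difference. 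I would conclude that
\[
\deg_p(X)=\frac{(\omega p-1)^n-(\omega-1)^n}{\omega}\,.
\]

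There is no real obstacle here: the whole content is in Corollary \ref{cor: gen p-norm degree Segre}, and the only issue is a routine manipulation. The only thing to be a little careful about is the cancellation of the missing $j=n$ terms so as to recover complete binomial expansions; after that the formula drops out immediately.
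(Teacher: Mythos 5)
Your proof is correct and follows the paper's own approach: specialize Corollary \ref{cor: gen p-norm degree Segre} to $k=1$ and simplify the resulting sum via the Binomial Theorem. The paper states this in one line; you carry out the same manipulation explicitly, including the cancellation of the missing $j=n$ terms, which is a helpful clarification but not a different argument.
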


\begin{proof}
By Corollary \ref{cor: gen p-norm degree Segre} we have
\[
\deg_p(X) = \sum_{j=0}^{n-1}(-1)^j(p^{n-j}-1)\binom{n}{j}\omega^{n-1-j}\,.
\]
The statement follows by the Binomial Theorem.
\end{proof}

Finally, a natural consequence of Theorem \ref{thm: p-degree Chern classes} is a formula for the $p$-norm distance degree in terms of the Euler characteristic of a special difference of varieties. The following formula is inspired by \cite[Eq. (1.1)]{AHEDdegree}.
Note that for us $c_i(X)$ is the component of the Chern class of $X$ of dimension $\dim(X)-i$ (as with standard Chern classes), while in \cite{aluffiProjective} it is the component of dimension $i$.
We think that a similar formula holds without the assumption of transversality between $X$ and the $p$-isotropic hypersurface $Q_p$.

\begin{proposition}\label{prop: p-degree smooth proj variety Euler}
Let $X\subset\PP^{n-1}$ be smooth projective variety of dimension $m$ in general position with respect to $Q_p$. Then
\begin{equation}\label{eq: p-degree smooth projective Euler}
\deg_p(X)=(-1)^m(p-1)\chi(X\setminus(Q_p\cup H))\,,
\end{equation}
where $H$ is a general hyperplane of $\PP^{n-1}$.
\end{proposition}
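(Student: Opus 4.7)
The plan is to derive the formula from Theorem~\ref{thm: p-degree Chern classes} by computing $\chi(X\setminus(Q_p\cup H))$ directly as a degree of a characteristic class on $X$. The transversality assumption plus the fact that $Q_p$ is smooth as a projective hypersurface (its only singular point in $\C^n$ is the origin, which gives no projective point when $p\ge 2$) ensures that $D_1\coloneqq X\cap Q_p$ and $D_2\coloneqq X\cap H$ are smooth divisors in $X$ meeting transversally, with divisor classes $ph$ and $h$ respectively (where $h$ is the hyperplane class restricted to $X$).

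The main computational tool is the Chern--Schwartz--MacPherson class $c_{SM}$, which satisfies $\chi(U)=\int_X c_{SM}(1_U)$ for any constructible subset $U\subseteq X$ and is additive under inclusion--exclusion. For a smooth divisor $D\subset X$, adjunction together with the projection formula gives
\[
i_*c_{SM}(D)=\frac{[D]}{1+[D]}\cdot c(\mathcal{T}_X)\cap[X],
\]
so $c_{SM}(X\setminus D)=\frac{c(\mathcal{T}_X)}{1+[D]}\cap[X]$. Applying additivity twice to $D_1, D_2$ and simplifying yields
\[
c_{SM}\bigl(X\setminus(Q_p\cup H)\bigr)=\frac{c(\mathcal{T}_X)}{(1+ph)(1+h)}\cap[X].
\]
Taking degrees, $\chi(X\setminus(Q_p\cup H))=\int_X c(\mathcal{T}_X)/[(1+ph)(1+h)]$.

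The remaining step is purely algebraic: expand the rational factor via partial fractions,
\[
\frac{1}{(1+ph)(1+h)}=\frac{1}{p-1}\Bigl(\frac{p}{1+ph}-\frac{1}{1+h}\Bigr)=\frac{1}{p-1}\sum_{i\ge 0}(-1)^ih^i(p^{i+1}-1),
\]
multiply by $c(\mathcal{T}_X)=\sum_j c_j(X)$, extract the degree-$m$ term, and pair with the fundamental class. Using the convention $\deg(c_j(X))=\int_X c_j(X)\cdot h^{m-j}$, the result is
\[
\chi\bigl(X\setminus(Q_p\cup H)\bigr)=\frac{1}{p-1}\sum_{j=0}^m(-1)^{m-j}(p^{m+1-j}-1)\deg(c_j(X))=\frac{(-1)^m}{p-1}\deg_p(X),
\]
where the last equality is exactly Theorem~\ref{thm: p-degree Chern classes}. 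Rearranging gives the desired identity \eqref{eq: p-degree smooth projective Euler}.

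I do not anticipate a serious obstacle: the only things to verify carefully are (i) that transversality suffices to apply the CSM formulas for the smooth divisors $D_1,D_2$ and their transverse intersection, and (ii) the bookkeeping identifying $\int_X c_j(X)\,h^{m-j}$ with the paper's $\deg(c_j(X))$. An alternative route, avoiding CSM classes, is to apply inclusion--exclusion on Euler characteristics of the smooth pieces $X$, $D_1$, $D_2$, $D_1\cap D_2$, compute each via adjunction, and combine; the algebra is the same up to reorganizing the partial-fraction step.
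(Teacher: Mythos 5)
Your proof is correct, and the underlying idea is the same as in the paper: both reduce to the identity
\[
\chi\bigl(X\setminus(Q_p\cup H)\bigr)=\int_X \frac{c(\mathcal{T}X)}{(1+ph)(1+h)}\cap[X],
\]
and then match this against Theorem~\ref{thm: p-degree Chern classes} via the expansion of $1/[(1+ph)(1+h)]$. The only difference is in packaging. You invoke the Chern--Schwartz--MacPherson formalism, using $c_{SM}(X\setminus D)=c(\mathcal{T}X)/(1+[D])\cap[X]$ for smooth divisors and additivity, to write down the class of the open complement directly. The paper instead manipulates the sum from Theorem~\ref{thm: p-degree Chern classes} into the partial-fraction expansion
$1 - \tfrac{h}{1+h} - \tfrac{ph}{1+ph} + \tfrac{h\cdot ph}{(1+h)(1+ph)}$ by hand, applies adjunction to interpret each of the four resulting integrals as $\chi(X)$, $\chi(X\cap H)$, $\chi(X\cap Q_p)$, $\chi(X\cap Q_p\cap H)$, and finishes with inclusion--exclusion of Euler characteristics. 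These are two renderings of the same argument --- the CSM route is a little more conceptual and cites the divisor formula as a black box, while the paper's route stays within adjunction and the Gauss--Bonnet--Chern identity $\chi(Y)=\int c_{\mathrm{top}}(Y)$ for the four smooth pieces. Your bookkeeping and the use of transversality (to guarantee $X\cap Q_p$, $X\cap H$, $X\cap Q_p\cap H$ are smooth) are both correct; no gap.
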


\begin{proof}
For completeness, we briefly recall and adapt the steps used to prove \cite[Eq. (3.2)]{AHEDdegree} when $X\subset\PP^{n-1}$ is a smooth projective variety of dimension $m$ in general position with respect to $Q_p$. First we observe that
\[
\frac{1}{(1+h)(1+ph)} = \sum_{i=0}^\infty(-1)^ih^i\sum_{j=0}^\infty(-p)^jh^j = \sum_{s=0}^\infty(-1)^s\left[\sum_{i=0}^s p^i\right]h^s = \sum_{s=0}^\infty(-1)^s\frac{p^{s+1}-1}{p-1}h^s\,,
\]
hence the coefficient of $h^N$ in the expansion of $\frac{h^{N-k}}{(1+h)(1+ph)}$ is $(-1)^{k}\frac{p^{k+1}-1}{p-1}$. Then, by Theorem \ref{thm: p-degree Chern classes},
\begin{align*}
    \deg_p(X) &= \sum_{k=0}^m(-1)^{k}\deg(c_{k}(X))(p^{m+1-k}-1)\\
    & = (-1)^m\sum_{k=0}^m\deg(c_{k}(X))(-1)^{k}(p^{k+1}-1)\\
    & = (-1)^m(p-1)\sum_{k=0}^m\deg(c_{k}(X))\int\frac{h^{m-k}}{(1+h)(1+ph)}\cdot h^{n-1-m}\cap[\PP^{n-1}]\\
    & = (-1)^m(p-1)\sum_{k=0}^m\int\frac{1}{(1+h)(1+ph)}\cdot c(\mathcal{T}X)\cap[X]\\
    & = (-1)^m(p-1)\sum_{k=0}^m\int\left(1-\frac{h}{1+h}-\frac{ph}{1+ph}+\frac{h\cdot ph}{(1+h)(1+ph)}\right)\cdot c(\mathcal{T}X)\cap[X]\,,
\end{align*}
where $c(\mathcal{T}X)\cap[X]$ is the total Chern class of $X$.
Let $H\subset\PP^{n-1}$ be a general hyperplane. Since $X$ is transversal to $Q_p$, the last expression is equal to
\[
\begin{gathered}
(-1)^m(p-1)\left[\int c(\mathcal{T}X)\cap[X]-\int c(\mathcal{T}(X\cap H))\cap[X\cap H]\right.\\
\left.-\int c(\mathcal{T}(X\cap Q_p))\cap[X\cap Q_p]+\int c(\mathcal{T}(X\cap H\cap Q_p))\cap[X\cap H\cap Q_p]\right]\,,
\end{gathered}
\]
Since the degree of the top Chern class of a compact complex nonsingular variety $Y\subset\PP^{n-1}$ is its Euler characteristic $\chi(Y)$, we conclude that
\[
\deg_p(X) = (-1)^m(p-1)[\chi(X)-\chi(X\cap H)-\chi(X\cap Q_p)+\chi(X\cap Q_p\cap H)]\,.
\]
The statement follows by the inclusion-exclusion properties of the Euler characteristic.
\end{proof}

\begin{example}\label{ex: p-degree smooth proj plane curve}
Let $X$ be a smooth curve of degree $d$ in $\PP^2$. Then its genus is $g=\binom{d-1}{2}$ and by the Riemann-Hurwitz formula its Euler characteristic is $\chi(X)=2(1-g)$, therefore $\chi(X)=d(3-d)$.
Let $H$ be a general line in $\PP^2$. We get that
\[
\chi(X\setminus(Q_p\cup H))=\chi(X)-\chi(X\cap Q_p)-\chi(X\cap H)+\chi(X\cap Q_p\cap H)\,.
\]
The intersections $X\cap Q_p$ and $X\cap H$ are unions of $pd$ and $d$ points respectively, while $X\cap Q_p\cap H=\emptyset$. If $X$ is transversal to $Q_p$, we can apply Proposition \ref{prop: p-degree smooth proj variety Euler} and conclude that
\[
\deg_p(X)=(1-p)\chi(X\setminus(Q_p\cup H))=(1-p)(d(3-d)-d(p+1))=d(p-1)(d+p-2)\,.
\]
Note the difference with the affine case of Example \ref{ex: p-norm degree smooth plane curve} by a factor $p-1$, which is $1$ for the Euclidean distance.
\end{example}

\begin{example}
In Example \ref{ex: p-norm degree rational normal curve toric} we computed the $p$-norm distance degree of a rational normal curve in $\PP^d$. We may repeat the same computation using Proposition \ref{prop: p-degree smooth proj variety Euler}. Indeed, in general $\chi(\PP^d)=d+1$, so $\chi(X)=\chi(\PP^1)=2$. Let $H$ be a general hyperplane in $\PP^d$. 
Similarly to the previous example, we have that $X\cap Q_p$ and $X\cap H$ are the union of $pd$ and $d$ points respectively, and $X\cap Q_p\cap H=\emptyset$, so we conclude that
\[
\deg_p(X)=(1-p)\chi(X\setminus(Q_p\cup H))=(p-1)[(p+1)d-2]\,.
\]
For $d=2$, the previous formula simplifies to $\deg_p(X)=2p(p-1)$ and agrees with Example \ref{ex: p-degree smooth proj plane curve}.
\end{example}

Proposition \ref{prop: p-degree smooth proj variety Euler} has an affine counterpart, which can be proved similarly as in \cite[Theorem 3.8]{MRWmultiview} for arbitrary $p$.

\begin{proposition}\label{prop: p-norm distance degree smooth affine Euler}
Let $X\subset\C^n$ be a smooth closed subvariety of dimension $m$. Let $z_1,\ldots,z_n$ be coordinates of $\C^n$. For a general $\beta=(\beta_0,\ldots,\beta_n)\in\C^{n+1}$, let $U_\beta$ denote the complement of the hypersurface $\sum_{i=1}^n(z_i-\beta_i)^p+\beta_0=0$ in $\C^n$. Then
\[
\deg_p(X)=(-1)^m\chi(X\cap U_\beta)\,.
\]
\end{proposition}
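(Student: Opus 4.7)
The proof follows the blueprint of Maxim-Rodriguez-Wang \cite[Theorem 3.8]{MRWmultiview}, with the Euclidean quadric replaced throughout by the $p$-norm polynomial. The key observation is that $\deg_p(X)$ can be interpreted as the count of critical points of a logarithmic function on a very affine variety, to which a Gauss-Bonnet type theorem applies.

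Set $g_\beta(z) \coloneqq \sum_{i=1}^n (z_i - \beta_i)^p + \beta_0$ and $\phi_\beta \coloneqq \log g_\beta$, regarded as a holomorphic function on $X\cap U_\beta$. The first step is to show that, for general $\beta$, the hypersurface $V(g_\beta)\subset \C^n$ meets $X$ transversally (by Bertini applied to the linear family $\beta_0 \mapsto V(g_\beta)$, once $\beta_1,\ldots,\beta_n$ are fixed), so that $X\cap U_\beta$ is smooth of pure dimension $m$.

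The second step is to identify the critical locus of $\phi_\beta$ on $X\cap U_\beta$ with the $p$-norm critical points of $X$ with respect to the data point $\beta'\coloneqq(\beta_1,\dots,\beta_n)$. Since $g_\beta$ does not vanish on $U_\beta$, the differential $d\phi_\beta|_X = (dg_\beta/g_\beta)|_X$ vanishes at $x$ if and only if $\nabla g_\beta(x) = p\,(x-\beta')^{p-1}$ is orthogonal to $T_x X$, which is exactly the $p$-norm criticality condition. By Corollary~\ref{prop: gen critical ideal} applied to $f_u = d_u^p$, the number of such points is $\deg_p(X)$ for general $\beta'$, and for general $\beta$ these critical points are simple (e.g.\ by generic smoothness on the projection $\mathcal{F}_X^{(p)}\to\C_u^n$).

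The third step is to apply the Gauss-Bonnet/Franecki-Kapranov theorem for very affine varieties (or, more directly, the variant used in \cite[Theorem 3.8]{MRWmultiview}): for a sufficiently generic holomorphic function of the form $\log h$ on a smooth $m$-dimensional variety $Y\subset (\C^*)^N$, the signed count of critical points equals $(-1)^m\chi(Y)$. To invoke it, embed $X\cap U_\beta \hookrightarrow X\times \C^*$ via $x\mapsto (x,g_\beta(x))$ and view $\phi_\beta$ as the logarithm of the last coordinate. The genericity of $\beta$ guarantees that the hypotheses (no critical points at infinity, all critical points simple and contributing with sign $(-1)^m$) are satisfied. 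Combining with the second step yields the desired identity.

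The main obstacle is the third step: proving the correct bookkeeping for critical points at infinity and the sign of each contribution, which is the content of the Franecki-Kapranov statement. This is handled via a log compactification of $X\cap U_\beta$ whose boundary divisor, thanks to the genericity of $\beta$ (in particular of $\beta_0$, which controls transversality of $V(g_\beta)$ with $X$ and with any boundary stratum), is simple normal crossings and meets all strata transversally; under these conditions the proof in \cite[Theorem 3.8]{MRWmultiview} carries over verbatim with $p$ in place of $2$.
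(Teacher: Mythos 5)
Your proposal is conceptually in the right neighborhood (log of the objective, count critical points, relate to Euler characteristic) but it misses the single idea that makes the paper's proof a two-line reduction, and your third step replaces that idea with an unverified claim.

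The paper does not attempt a direct Gauss--Bonnet argument for the degree-$p$ polynomial $g_\beta$. Instead it introduces the Veronese-type closed embedding
\[
i\colon\C^n\to\C^{pn},\qquad (z_1,\ldots,z_n)\mapsto (z_1,\ldots,z_n,z_1^2,\ldots,z_n^2,\ldots,z_1^p,\ldots,z_n^p)\,,
\]
and observes that $g_\beta=\sum_k(z_k-\beta_k)^p+\beta_0$ is the pullback along $i$ of an \emph{affine linear} function on $\C^{pn}$. One then applies \cite[Theorem~3.1]{MRWmultiview} to the smooth closed subvariety $i(X)\subset\C^{pn}$ and that linear function, and the result drops out because $i$ is an isomorphism onto its image, so both the critical points and the Euler characteristic transport across $i$. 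This linearization is also exactly how MRW prove their Theorem~3.8 (the $p=2$ case) from Theorem~3.1, via $z\mapsto(z,z^2)$.

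Your Steps~1 and~2 (transversality of $V(g_\beta)$ with $X$ for generic $\beta$; identification of critical points of $\log g_\beta$ with $p$-norm critical points) are fine and indeed implicit in the paper's invocation of Theorem~3.1. The gap is Step~3. You assert that a Franecki--Kapranov/Gauss--Bonnet statement applies directly to $\log g_\beta$ on $X\cap U_\beta$ and that ``the proof in \cite[Theorem~3.8]{MRWmultiview} carries over verbatim with $p$ in place of $2$.'' But $X\cap U_\beta$ is not a closed subvariety of a torus, and your substitute embedding $x\mapsto(x,g_\beta(x))$ lands in $X\times\C^*$, which is also not a torus, so Franecki--Kapranov in its standard form does not apply. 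More importantly, MRW's own proof of Theorem~3.8 \emph{is} the embedding trick; ``carrying it over with $p$ in place of $2$'' is precisely the observation about the map $i$ above, not the log-compactification-with-SNC-boundary argument you sketch. As written, Step~3 is the assertion to be proved, not a proof of it, and the conditions you list (simple normal crossings boundary, no critical points at infinity, all critical points simple of local index $(-1)^m$) are exactly what the linearization renders automatic by reducing to the well-understood case of a generic linear section. Without that reduction, verifying them for a degree-$p$ hypersurface $V(g_\beta)$ is nontrivial and is left undone in your proposal.

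In short: introduce the embedding $i$, note that $g_\beta=i^*(\lambda_\beta)$ for an explicit affine linear $\lambda_\beta$ on $\C^{pn}$, and cite \cite[Theorem~3.1]{MRWmultiview} for $i(X)$ and $\lambda_\beta$. That replaces your entire third step and is where the actual content of the proof lives.
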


\begin{proof}
Consider the map
\[
i\colon\C^n\to\C^{pn}\,,\ (z_1,\ldots,z_n)\mapsto (z_1,\ldots,z_n,z_1^2,\ldots,z_n^2,\ldots,z_1^p,\ldots,z_n^p)\,.
\]
The map $i$ is a closed embedding of $\C^n$ into $\C^{pn}$.
Consider coordinates $(w_{11},\ldots,w_{1n},\ldots,w_{p1},\ldots,w_{pn})$ in $\C^{pn}$.
Then the function $\sum_{k=1}^n(z_k-\beta_k)^p+\beta_0$ on $C^n$ is equal to the pullback of the linear function $\beta_0+\sum_{i=0}^p\sum_{j=1}^n(-1)^{p-i}\binom{p}{i}w_{ij}\beta_j^{p-i}$.
Then conclusion follows applying \cite[Theorem 3.1]{MRWmultiview}.
\end{proof}

Proposition \ref{prop: p-norm distance degree smooth affine Euler} does not apply to smooth projective varieties because their affine cones have a singularity at the origin.

\section*{Acknowledgements}

All authors are partially supported by the Academy of Finland Grant No. 323416. We acknowledge the computational resources provided by the Aalto Science-IT project. We thank Emil Horobe\c{t}, Giorgio Ottaviani, Rafael Sendra, David Sevilla, Bernd Sturmfels, Carlos Villarino, Sergey Yurkevich and the community of StackExchange,  especially Georges Elencwajg and KReiser, for helpful suggestions.

\bibliographystyle{alpha}
\bibliography{bibliography}

\end{document}